\pgfplotsset{compat=newest}
\theoremstyle{definition}
\newtheorem{thm}{Theorem}[section]
\newtheorem{prop}{Proposition}[section]
\newtheorem*{prop*}{Proposition}
\newtheorem{defn}{Definition}[section]
\newtheorem{rem}{Remark}[section]
\newtheorem{lemma}{Lemma}[section]
\newtheorem{ex}{Example}[section]
\newcommand{\N}{\mathbb{N}}
\newcommand{\Z}{\mathbb{Z}}
\newcommand{\R}{\mathbb{R}}
\newcommand{\C}{\mathbb{C}}
\newcommand{\crit}{\mathrm{Crit}}
\newcommand{\Address}{{
  \bigskip
  \footnotesize
  \textsc{Lehrstuhl für Analysis und Geometrie, Universität Augsburg, D-86135 Augsburg, Germany}\\
  \textit{E-mail address:} \href{mailto:filipvbrocic@gmail.com}{\tt filipvbrocic@gmail.com}
}}
\begin{document}
\title{Wrapped Floer homology and subcritical handle attachment}
\author{Filip Bro\'ci\'c}
\maketitle

\begin{abstract}
In this expository article, we present the proof of the invariance of the wrapped Floer homology under the subcritical handle attachment. This is proved in \cite{Ir13}. Here, we fix a minor gap in the proof about the choice of a cofinal family of Hamiltonians. We adapt the arguments from \cite{Fa16-phd, Fa20}, where the gap was resolved for the case of handle attachment in symplectic homology. The effect of the handle attachment on the symplectic homology was originally explored in \cite{Ci02}.  
\end{abstract}

\section{Introduction}\label{sec:intro}

The purpose of this expository article is to prove the invariance theorem for subcritical handle attachment, while providing a fairly detailed introduction to the objects involved in the statement. Our goal was to present a mostly self-contained proof; however, it would be a bold claim to say that we have fully achieved this.

One way to describe wrapped Floer homology is as a Lagrangian analogue of symplectic homology. To elaborate a bit further, Floer introduced an infinite-dimensional Morse theory to study periodic orbits of Hamiltonian systems on closed symplectic manifolds. Symplectic homology is a variant of Floer theory that helps us study periodic orbits of Hamiltonian systems that are controlled at infinity on certain non-compact symplectic manifolds. Wrapped Floer homology studies Hamiltonian chords with endpoints on Lagrangian submanifolds belonging to an appropriate class of non-compact Lagrangians, in an analogous setup to symplectic homology. For definitions of all the objects involved, see Section \ref{sec:geometry_background}.

We have chosen to include a complete statement of the main result in the introduction to facilitate citation. Readers who are not familiar with the objects appearing in the statement will find them gradually introduced in Sections \ref{sec:geometry_background}, \ref{sec:wrapped_floer}, and \ref{sec:handle_attach}.

\begin{thm}\label{thm:main}
Let $(M, \lambda)$ be a Liouville domain such that $2c_1(M) = 0$. Let $L_0, L_1$ be two exact cylindrical Lagrangians which satisfy that the Maslov classes $\mu^{\Theta}_{L_i} \in H^1(L_i)$ vanish, with respect to the same non-vanishing section $\Theta$ of the square of the complex determinant line bundle $(\bigwedge^n_{\C} TM)^{\otimes 2}$. If $S \subset L_1 \cap \partial M$ is an isotropic sphere with a trivial conformal normal symplectic bundle, then
$$
HW_*(L_0, L_1 ; M) \cong HW_*(L_0, L_1 \cup_{S} H_k^{n} ; M \cup_{S} H_k^{2n}).
$$
Here $H_k^{2n}$ is the standard Weinstein $k$-handle, and $H_k^n$ is its imaginary part. If $L:=L_0 = L_1$,  then it is enough to assume that $2c_1(M,L) = 0 \in H^2(M,L)$. In this case the statement is $HW_*(L;M) = HW_*(L\cup_{S} H_k^{n}; M \cup_{S} H_k^{2n})$.
\end{thm}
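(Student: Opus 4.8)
The strategy is to compare cofinal families of Hamiltonians on $M$ and on the handle-attached domain $\widehat{M} := M \cup_S H_k^{2n}$, exploiting that a subcritical handle attachment does not create new Reeb chords of small action. First I would recall that $HW_*(L_0,L_1;M)$ is computed as a direct limit over an admissible cofinal family of Hamiltonians $H_\tau$ that are linear of slope $\tau \to \infty$ at infinity, with chord generators corresponding to Hamiltonian chords from $L_0$ to $L_1$; on a Liouville domain the action filtration separates the ``interior'' generators (low action, essentially coming from $L_0 \cap L_1$ and short chords) from the Reeb chords on the boundary. The core geometric input, borrowed from \cite{Ci02} and adapted to the Lagrangian setting as in \cite{Ir13}, is that after attaching a subcritical handle along the isotropic sphere $S \subset L_1 \cap \partial M$ with trivial conformal symplectic normal bundle, the contact boundary $\partial \widehat{M}$ and the Legendrian $L_1 \cup_S H_k^n$ can be arranged so that the Reeb chords of action below any fixed bound $a$ are in bijection with those of $\partial M$ — the handle contributes only chords of arbitrarily large action once its ``size'' is taken small enough. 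The grading hypotheses ($2c_1(M)=0$, vanishing Maslov classes $\mu^\Theta_{L_i}$ with respect to a common section $\Theta$, or $2c_1(M,L)=0$ in the equal case) guarantee that the Conley--Zehnder/Maslov indices are well-defined and unchanged, so the comparison is grading-preserving.

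The main steps, in order, are: (1) Fix the Weinstein handle model $H_k^{2n}$ and its Lagrangian core $H_k^n$, and describe the glued Liouville structure on $\widehat M$ together with the extended cylindrical Lagrangian $\widehat{L}_1 = L_1 \cup_S H_k^n$; verify that $\widehat{L}_1$ is still exact and cylindrical and that the Maslov data extend, using triviality of the conformal symplectic normal bundle of $S$ to trivialize $\Theta$ across the handle. (2) Construct a carefully chosen cofinal family $\widehat{H}_\tau$ on $\widehat M$ which, near the handle region, is a small perturbation that introduces no low-action chords — this is precisely the step where the ``minor gap'' of \cite{Ir13} lies, and I would import the fix from \cite{Fa16-phd, Fa20}, replacing the naive cofinal family by one whose restriction to the handle neighborhood is controlled so that the action of handle-supported chords tends to infinity uniformly as the handle parameter shrinks. (3) For each fixed action window $(-\infty, a)$, identify the filtered complexes $CW^{<a}_*(L_0,L_1;M)$ and $CW^{<a}_*(L_0,\widehat L_1;\widehat M)$: the generators match by the bijection of low-action chords, and one checks that the Floer moduli spaces (hence the differentials) agree, using a maximum principle / confinement argument to show no Floer trajectory enters the handle region — here the subcriticality of the handle is essential, as a $J$-holomorphic curve with boundary on a cylindrical Lagrangian cannot be trapped in a subcritical handle of small size. (4) Pass to the direct limit over $a \to \infty$ and $\tau \to \infty$ to conclude the isomorphism $HW_* (L_0,L_1;M) \cong HW_*(L_0,\widehat L_1; \widehat M)$; in the case $L_0 = L_1$ the same argument runs with the weaker relative hypothesis $2c_1(M,L)=0$ since only relative Maslov indices enter.

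The principal obstacle — and the raison d'être of this expository note — is step (2) together with the confinement in step (3): one must produce a cofinal family on $\widehat M$ that is simultaneously (a) admissible (linear at infinity, nondegenerate, with the right behavior near $\widehat L_1$), (b) such that all ``new'' chords living in the handle have action bounded below by a quantity that can be pushed above any prescribed level by shrinking the handle, and (c) compatible with an almost complex structure for which the relevant Floer trajectories satisfy a maximum principle keeping them away from the handle core. Getting these three requirements to coexist is delicate because the handle attachment deforms the contact form on the boundary, and a too-crude choice of Hamiltonian (as in \cite{Ir13}) fails to control the actions; the remedy is the more refined construction of \cite{Fa16-phd, Fa20}, which I would transplant essentially verbatim to the wrapped setting, checking at each point that the Lagrangian boundary conditions do not obstruct the estimates. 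Once the cofinal family and the confinement lemma are in place, the chain-level identification and the passage to the limit are formal.
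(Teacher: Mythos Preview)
Your overall architecture is close to the paper's, but there is a genuine gap in the mechanism you invoke in steps (2)--(4). You claim that the new chords created in the handle can be arranged to have \emph{action} tending to infinity, and you then propose to compare action-filtered complexes $CW^{<a}_*$. This is precisely what the paper says does \emph{not} work directly: the extension of the Hamiltonian across the handle (via $\psi_\delta$) has small slope $\epsilon$ on the subspace $\{x=y=0\}$, so the handle chords do not have large action, and the paper explicitly remarks that one cannot simply identify $HW_*^{\geq 0}$ on the surgered manifold with $HW_*$ on $M$ ``by the action reasons.'' The fix from \cite{Fa16-phd,Fa20} is not that the handle chords acquire large action, but that one allows non-constant chords in the handle and controls their \emph{Maslov (Robbin--Salamon) index}. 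The computation gives $\mu_{RS}(x) = \tfrac{n}{2} + (n-k)\bigl(\tfrac{aC_z}{2\pi}-\tfrac12\bigr)$, and it is the factor $(n-k)>0$ --- i.e.\ subcriticality --- that forces the index to infinity as the slope $a\to\infty$. The comparison is then done degree by degree: for each fixed $*=k$ there is $c(k)$ such that for $i\geq c(k)$ the chain groups in degree $k$ on $M$ and on $M\cup H_k^{2n}$ have the same generators, and one passes to the direct limit.

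Two further points. First, the role of shrinking the handle (your ``handle size'') is not to push handle-chord actions up; it is to ensure, via a genericity argument on the contact form (Lemma~\ref{lemma:generic_form}), that Reeb chords on $\partial M$ of period $\leq a_i$ do not enter the thin handle $\mathcal{H}_{\delta_i}$, so that the chords of $H_i$ split cleanly into those in $M$ and those in the handle. Second, the confinement of Floer trajectories connecting chords in $M$ is obtained from the integrated maximum principle (Lemma~\ref{lemma:no_escape_2}) with $J$ of contact type near $\partial M$; subcriticality plays no role there. Subcriticality enters only through the index formula above. If you rewrite steps (2)--(4) replacing ``action'' by ``index'' and insert the Lyapunov-function argument ($\mathcal{L}=\sum_{i=1}^k x_iy_i$) localizing the handle chords to $\{x=y=0\}$ together with the index computation, your outline matches the paper.
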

\begin{rem}
	It follows from the proof that one can also allow Lagrangians $L_1$ that are linear in the handle region, invariant under the Liouville flow in $H_{k}^{2n}$, and such that the function $\mathcal{L}= \sum_{i=1}^{k} x_i y_i$ vanishes along them. The Lagrangians in $H_k^{2n}$ that satisfy both of these conditions are linear Lagrangians of the form $\nu V_0 \times U_1 \subset \C^k \times \C^{n-k}$, where $V_0 \subset \R^k$, $\nu V_0 \subset \C^k$ is its conormal, and $U_1 \subset \C^{n-k}$ is any linear Lagrangian through the origin. In particular, one can take $L_0 = L_1 = \emptyset$, and take the horizontal Lagrangian $i H_k^n$. Then, the invariance result says that after the Lagrangian $iH_k^n$ has vanishing wrapped Floer homology, which is expected since $HW_*(\R^n; \C^n) = 0$. 
\end{rem}

\subsection{Applications}

In this section, we will sketch the applications of the handle attaching theorem from \cite{Ci02, Se07, Mc09, Ir13,BF25}. Even though we do not cover symplectic homology in the present article, we start with the original application of the invariance theorem to Arnol`d's chord conjecture from \cite{Ci02}. 

The Arnol`d's chord conjecture as stated in \cite{Ar86} guarantees that for any contact form $\alpha$ that induces the standard contact structure $(S^3, \xi_{st})$, and for any Legendrian $\Lambda \subset S^3$ there exists a Reeb chord with endpoints on $\Lambda$. In \cite{Ci02} the following theorem was proved.

\begin{thm}
	The chord conjecture holds for a standard Legendrian unknot sphere $\Lambda$ in $(Y, \xi)$ where $Y$ is the boundary of a subcritical Weinstein domain $W$.
\end{thm}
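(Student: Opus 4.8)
The plan is to adapt Cieliebak's displacement argument, with the symplectic homology of the subcritical domain replaced by the wrapped Floer homology of a Lagrangian disc bounding $\Lambda$, and with the vanishing of that homology extracted from Theorem \ref{thm:main}. Since $\Lambda$ is a \emph{standard} Legendrian unknot, it bounds an embedded Lagrangian disc: completing $W$ to $\widehat W$, there is an exact cylindrical Lagrangian $L_\Lambda\subset\widehat W$, diffeomorphic to $\R^n$, cylindrical over $\Lambda$ at infinity, and sitting inside a Darboux ball $B\subset\widehat W$ as the model plane $i\R^n\subset\C^n$. The structural input I would invoke is that a subcritical Weinstein domain has a handle decomposition with all handles of index $<n$, and that---because $\Lambda$ is standard, so that all such unknots in $(Y,\xi)$ are Legendrian isotopic, hence interchangeable by a Liouville automorphism of $\widehat W$ isotopic to the identity---the Darboux ball $B$, together with $\Lambda$ and $L_\Lambda$, can be taken disjoint from every handle. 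In this way $\widehat W$ is exhibited as $\C^n$ with a sequence of subcritical handles attached along isotropic spheres $S_1,\dots,S_m$, each disjoint from $L_\Lambda$.

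I would then apply Theorem \ref{thm:main} one handle at a time, in the variant where the attaching sphere is disjoint from the Lagrangian, so that $L_0=L_1=L_\Lambda$ is left unchanged by the attachment (a case covered by the proof given in this article, compatible with the Remark following Theorem \ref{thm:main}); working with $\mathbb{Z}/2$-coefficients so that no Maslov hypothesis is needed. The outcome is
$$
HW_*(L_\Lambda;\widehat W)\;\cong\;HW_*\big(i\R^n;\C^n\big)\;=\;0,
$$
the last equality being the vanishing of $HW_*(\R^n;\C^n)$ noted in the Remark (equivalently, the base case of the same invariance, or a cotangent-fibre computation).

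To finish I would argue by contradiction: suppose $\Lambda$ admits no Reeb chord. For a cofinal family of admissible Hamiltonians linear at infinity with slopes tending to $+\infty$, the generators of the wrapped complex of $L_\Lambda$ are of two kinds, the interior ones arising from a small Morse perturbation of the disc and those corresponding to Reeb chords of $\Lambda$ shorter than the current slope; under our assumption the second kind never appears, so the directed system stabilises at the Morse homology of the disc $L_\Lambda$, which is free of rank one. This contradicts the vanishing above, so $\Lambda$ must carry a Reeb chord. I expect the delicate step to be the first one: ensuring that the standard Legendrian unknot and its Lagrangian filling disc can genuinely be isotoped off all the subcritical handles, and that the resulting $L_\Lambda$ meets the admissibility conditions required to apply Theorem \ref{thm:main}. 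Granting that, the homology computation and the contradiction are essentially formal.
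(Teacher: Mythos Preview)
Your approach is genuinely different from the paper's. The paper follows Cieliebak's original argument via \emph{symplectic} homology: one attaches a \emph{critical} handle along $\Lambda$, recognises $W\cup_\Lambda H_n^{2n}$ as $T^*S^n$ with subcritical handles attached, uses the Viterbo isomorphism $SH_*(T^*S^n)\cong H_*(\Lambda S^n)$ together with subcritical invariance to compute $SH_*(W\cup_\Lambda H_n^{2n})$, and then invokes \cite[Theorem~1.11]{Ci02} (that a critical handle along a chord-free Legendrian changes $\dim SH_*$ by at most one) to derive a contradiction with $SH_*(W)=0$. Your argument stays entirely within wrapped Floer homology of a Lagrangian disc filling $\Lambda$, uses only the \emph{subcritical} invariance of Theorem~\ref{thm:main}, and replaces the free loop space computation by the elementary vanishing $HW_*(\R^n;\C^n)=0$. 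In that sense your route is more internal to this article: it avoids the Viterbo isomorphism and the critical-handle dimension estimate, neither of which is proved here.

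Two points deserve comment. First, your parenthetical that ``working with $\Z/2$-coefficients'' removes the Maslov hypothesis is not quite right: the hypotheses $2c_1(M,L)=0$ and $\mu_L^\Theta=0$ in Theorem~\ref{thm:main} are there to secure a global $\Z$-grading, which the \emph{proof} needs for the index argument on the handle orbits; they have nothing to do with orientations. For your disc $L_\Lambda\cong\R^n$ the Maslov class vanishes since $H^1(L_\Lambda)=0$, but $2c_1(W)=0$ is an actual assumption on the subcritical domain that you should either impose or argue around. Second, the step you correctly flag as delicate---arranging a Weinstein presentation of $W$ over $B^{2n}$ in which all subcritical attaching spheres miss $\Lambda$ and its filling disc---is the real content of your argument; the dimension count $(k-1)+(n-1)<2n-1$ makes it plausible via an isotropic $h$-principle, but it is not a tautology and is precisely what replaces \cite[Proposition~2.9]{Ci02} in the paper's proof. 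Granting it, the rest of your argument is sound.
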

A Weinstein domain is subcritical if it is obtained by attaching subcritical handles to the standard ball $B^{2n}$. This means that the isotropic spheres along which the handles are attached are of dimension $k-1 < n-1$. The proof consists of two steps. Firstly, one can show that if we attach a critical handle $H_n^{2n}$ to $W$ along $\Lambda$ then the result of the handle attaching is Weinstein homotopic to $T^*S^{n}$ with subcritical handles attached \cite[Proposition 2.9]{Ci02}. Hence, we know that using the subcritical handle attachment, together with an isomorphism from symplectic homology of $T^*S^n$ with the homology of the free loop space $\Lambda S^n$ we have $SH_*(W \cup_{\Lambda} H_n^{2n}) \cong H_*(\Lambda S^n)$. On the other hand, by \cite[Theorem 1.11]{Ci02} we know that the dimension of the symplectic homology of $SH_*(W \cup_{\Lambda} H_n^{2n})$ differs by at most one from the symplectic homology $SH_*(W)$ if $\Lambda$ poses no Reeb chords\footnote{The analogue of such a statement we do not cover in the present article.}. Now, since $W$ is subcritical, it has the same symplectic homology as $B^{2n}$, hence it vanishes. This leads to a contradiction since $H_*(\Lambda S^n)$ is infinite dimensional, hence $\Lambda$ must have chords. 

Another application is the proof that every Liouville domain $W$ whose boundary is contactomorphic to the standard contact sphere $(S^{2n-1}, \xi_{st})$ has vanishing symplectic homology. Such a domain is called a \textit{Liouville filling}. This appears in \cite[Corollary 6.5]{Se07} and is attributed to Ivan Smith. The proof is using three main ingredients. The first one is a result by Eliashberg, Floer, and McDuff (see \cite[Theorem 5.1]{El91}) that every Liouville filling $W$ of the standard contact sphere $(S^{2n-1}, \xi_{st})$ is diffeomorphic to the ball $B^{2n}$. The second one is the spectral sequence that converges to $SH_*(W)$ for Liouville domains whose boundary $\partial W$ admits a contact form with periodic Reeb flow (\cite[Equation 3.2]{Se07}), and lastly, the invariance of symplectic homology under contact connected sum which is a special case of the subcritical handle attachment. 

Note that $W$ being diffeomorphic to $B^{2n}$ does not imply that $SH_*(W) = 0$. There are examples of \textit{exotic} symplectic structures on $\R^{2n}$, that are completions of a Weinstein domain. See \cite{SS05} for the case $\R^{4m}$ with $m>1$, and \cite{Mc09} for $\R^{2n}$, $n>3$. Due to a theorem of Gromov \cite{Gr85} it is known that every Liouville filling of the standard contact 3-sphere $(S^3, \xi_{st})$ is Liouville isomorphic to $\R^{4}$. In fact, in \cite{Mc09} used the invariance of symplectic homology under contact connected sum to build infinitely many exotic Stein structures on $\R^{2n}$. He distinguished them by the number of idempotent elements, where the product structure on $SH_*(W)$ is defined using the \textit{pair of pants} configurations.

For more details on symplectic (co)homology, we refer to survey articles \cite{Oa04, Se07, We}. For a complete proof of the relationship between the symplectic (co)homology of cotangent bundles and the homology of the free loop space of the base, we refer to \cite{Ab15}. The isomorphism over $\Z_2$ was established in \cite{Vi99}. The isomorphism with $\Z$ coefficients was established in \cite{SW06, ASch06}, and that the pair of pants product on symplectic (co)homology corresponds to the Chas-Sullivan product was proved in \cite{ASch10}.

Arnol`d's chord conjecture, in its original form, was completely solved in \cite{Mo01}, where it was proved that for a boundary of a subcritical Stein domain and any Legendrian $\Lambda$ there exists a Reeb chord. The natural generalization of the chord conjecture was solved for all closed $3$ contact manifolds and all closed Legendrian curves in \cite{HT11, HT13}. In \cite{BCS24} it was shown that the chord conjecture holds for Legendrians that are isotopic to the conormal lift in $T^*N$, with the standard contact structure on $S^*N$.

In \cite{Ir13} used the invariance theorem to prove the existence of non-trivial Reeb chords with endpoints on the zero section $N \subset T^*M$ on the energy hypersurface $H_V^{-1}(0)$ where:
$$
H_V(q,p) = \frac{1}{2} \| p \|^2 + V(q).
$$
Note that such chords obviously do not exist if $H_V^{-1}(0) \cap N = \emptyset$. In this case, $H_V^{-1}(c)$ is star-shaped, hence it is diffeomorphic to $S^* N$. Assume that $V$ is Morse, and that the index $n$ critical points of $V$ have critical value bigger than $0$. In the case $H_V^{-1}(0) \cap N \neq \emptyset$, $H_V^{-1}(0)$ is obtained by a sequence of subcritical handle attachments to $(B^{2n}, \R^n)$, hence by the invariance theorem
$$
HW_*(N \cap \{H_V \leq 0\}; \{H_V \leq 0\}) \cong HW_*(\R^n, B^{2n}) \cong 0.
$$
Since the wrapped Floer complex is generated by critical points of a Morse function on the Lagrangian, together with Reeb chords, there must be a Reeb chord with endpoints on $N$ because wrapped Floer homology vanishes.

In \cite{BF25}, the invariance theorem is used to prove the existence of infinitely many consecutive collisions on the energy hypersurface slightly above the first critical value in the circular restricted three-body problem. The Hamiltonian for the massless \textit{satellite} is given on $T^* \R^{2}\setminus \{e,m\}$ by
$$
H(q_1,q_2,p_1,p_2) = \frac{1}{2}|p|^2 - \frac{\mu}{|q-m|} - \frac{1- \mu}{|q-e|} +q_1 p_2 - q_2 p_1,
$$
where $e,m \in \R^{2}$ are the \textit{Earth} and the \textit{Moon}. The energy hypersurface below the first critical value has two bounded components, one containing $m$, the other containing $e$. Both of them can be regularized as a boundary of a fiber-wise star-shaped domain in $T^*S^2$, i.e. they are diffeomorphic to $\R P^3$ with a standard contact structure (see \cite[Corollary 1.5]{AFKP12}). The energy hypersurface slightly above the first critical values is shown to be diffeomorphic to the contact connected sum $\R P^3 \# \R P^3$ (see \cite[Corollary 1.5]{AFKP12}), and the contact connected sum is an example of a sub-critical handle attachment, hence the invariance theorem can be applied. Since consecutive collisions correspond to the Reeb chords with endpoints on the fiber in $T^*S^2$ below the first critical value, by the invariance theorem, we know that the wrapped Floer homology remains unchanged. In this case, it is isomorphic to the singular homology $H_*(\Omega_q S^2; \Z_2)$ of the based loop space $\Omega_q S^2$, and we know that $H_*(\Omega_q S^2; \Z_2)$ is infinite-dimensional.

\subsection{Organization}

In Section \ref{sec:geometry_background} we define the geometric setup: Liouville domains, (exact, cylindrical) Lagrangians, Hamiltonian diffeomorphisms, and almost complex structures. 

Section \ref{sec:wrapped_floer} involves the definition of the wrapped Floer homology groups $HW_*(L_0, L_1)$. To avoid additional complications with orientations, our homology groups will be defined over $\Z_2$. The assumptions about the first Chern classes are related to the global $\Z$ grading of the chain complex. We will explain how to use a certain PDE to define a differential and sketch the analysis behind the nice properties of the space of solutions. The homology $HW_*(L_0, L_1)$ is defined by a direct limit of $HF(L_0, L_1; H_i)$ where $H_i$, $i\in \N$ are \textit{linear at infinity}; we need to explain how to relate $HF(L_0, L_1; H_i)$ and $HF(L_0, L_1; H_j)$, and when is this possible.

In Section \ref{sec:transf_morph}, we present Viterbo's transfer morphism in the case of wrapped Floer homology following \cite{Vi99, AS10, Fa20}.

In Section \ref{sec:handle_attach}, we define contact surgery along an isotropic sphere $\Lambda$ following \cite{We91}. Topologically, it is the same as the standard surgery; on top of that, the surgered manifold has a contact structure. The trace of the surgery is a symplectic cobordism. In particular, for contact manifolds $Y$ that are fillable by a Liouville domain $M$, the surgered contact manifold is filled by $M \cup_{\partial M} W$, where $W$ is the symplectic cobordism given by the trace of the surgery.

Section \ref{sec:inv_handle} contains the proof of Theorem \ref{thm:main}. We follow the ideas of \cite{Fa16, Fa16-phd, Fa20} on how to define the cofinal family $\{H_i\}_{i \in \N}$ with the desired properties. The essence of the proof is the index argument about the control of the newly created chords in $H_k^{2n}$. That is why we need to have well-defined integer grading on the wrapped Floer homology. The gap in \cite{Ci02, Ir13} is that the cofinal family with the listed properties (see \cite[Lemma 2.5]{Ci02} and \cite[p. 393-394]{Ir13}) can not exist, and when this is resolved one creates more orbits then just the critical point corresponding to the origin in $H_k^{2n}$. However, the index of these orbits can be controlled, and the proof remains valid.

\section*{Acknowledgments}
This expository article was written while I was giving a mini-course on the invariance theorem at the University of Augsburg. I am grateful to Urs Frauenfelder, who, despite his expertise in the subject, attended this mini-course. I am also thankful to Zhen Gao for pointing out typos and inconsistencies in terms of conventions and notations. Additionally, I would like to thank Kai Cieliebak for many useful discussions. This project was supported by the Deutsche Forschungsgemeinschaft (DFG, German Research Foundation) – 517480394. 
\section{Symplectic geometry}\label{sec:geometry_background}

We assume some basic knowledge of symplectic geometry. We cover the definitions of objects that are relevant to the present situation. For more details on symplectic geometry, one can consult a wonderful textbook \cite{MS17}.

\begin{defn}
A Liouville domain is an exact symplectic manifold $(M, d \lambda)$ such that the one form $\lambda$ restricted to the boundary $\partial M$ is a contact form that induces the same orientation on $\partial M$ seen as the boundary of $M$. The primitive $\lambda \in \Omega^1(W)$ is called the Liouville form. 
\end{defn}

Given a Liouville domain $(M, \lambda)$ there is a canonical vector field $X_{\lambda}$ uniquely determined by $i_{X_{\lambda}} d \lambda= \lambda$. Uniqueness follows from the non-degeneracy of the symplectic form $\omega = d \lambda$. The vector field $X:=X_{\lambda}$ is called the \textit{Liouville vector field}, and we denote the Liouville domain by a triple $(M, \lambda, X)$. The fact that the contact form $\alpha=\lambda \vert_{\partial M}$ induces the same orientation as the induced one on $\partial M$ is equivalent to saying that the Liouville vector field $X$ is positively transverse to the boundary. Transversality is equivalent to the contact condition, and positivity is related to the orientation.

\begin{rem}
Every compact exact symplectic manifold $(M, d \lambda)$ has a non-empty boundary $\partial M$. Indeed, by Stoke's theorem, we have if $\partial M = \emptyset$:
$$
0 < \int_M \omega^n = \int_{\partial M} \lambda \wedge \omega^{n-1} = 0,
$$
which is a contradiction.
\end{rem}

Now, we give two main examples of a Liouville domain.
\begin{ex}\label{ex:ball}
A subset  $X \subset V$ of a vector space $V$ is called star-shaped with respect to $p \in X$ if for every $q \in X$ we have that the segment $$[p,q]:= \{t p + (1-t) q \mid t \in [0,1]\},$$ is a subset of $X$. Let $\Omega \subset \R^{2n}$ be a compact domain, star-shaped with respect to the origin $0 \in \R^{2n}$  with a smooth boundary $\partial \Omega$, then $$\left(\Omega, \frac{1}{2} \sum_{i} x_i dy_i - y_i dx_i, \frac{1}{2} \sum_{i} x_i \partial_{x_i} + y_i \partial_{y_i}\right),$$ is a Liouville domain. In particular, the unit ball $B^{2n}$ is a Liouville domain, and its boundary is a contact sphere with the standard contact structure.
\end{ex}

\begin{ex}\label{ex:codisk}
Let $Q$ be a smooth closed manifold, and let $T^*Q$ be its cotangent bundle. Cotangent bundle $T^*Q$ has a canonical symplectic form given by the differential of the Liouville form $\lambda_{can} \in \Omega^1(T^*Q)$, where for $Y_p \in T_p T^*Q$ we set
$$
\lambda_{can}(p) (Y_p) = p (d \pi (Y_p)),
$$
where $\pi: T^*Q \to Q$ is the projection. Let $\Omega \subset T^*Q$ be a compact, fiber-wise star-shaped domain with a smooth boundary. Then $(\Omega, \lambda_{can}, X)$ is a Liouville domain where $X(p) = p$. This equality should be understood through the canonical identification between the fiber $T^*_{\pi(p)} Q$ and the fiber of the vertical sub-bundle $V_p = \{ Y \in  T_p T^*Q \mid d \pi (Y) = 0 \} $.  In local coordinates induced by a local chart $q_1, ..., q_n \in \mathcal{U} \subset Q$ one has $\lambda_{can} = \sum p_i d q_i$ and $X = \sum p_i \partial_{p_i}$. In particular, if one fixes a Riemannian metric $g$ on $Q$, we have that the unit codisk bundle
$$
D^*_g Q := \{p \in T^*Q \mid \|p\|_g^* \leq 1\}
$$
is a fiber-wise convex (hence star-shaped) domain with a smooth boundary.
\end{ex}
Equality $i_X d \lambda = \lambda$ implies $\lambda(X) = 0$. By Cartan's magic formula, we have that the flow $\varphi^t$ of $X$ satisfies:

$$
\frac{d}{dt} \varphi_t^* \lambda = \varphi_t^*(d(i_X \lambda) + i_X d\lambda) = \varphi_t^*\lambda,
$$
so we get $\varphi_t^* \lambda = e^t \lambda$. Since $X$ is positively transverse to $\partial M$, and $M$ is compact, we have that the flow is defined for $t \in (-\infty, 0]$. The set $$\mathrm{Core}(M, X) = \bigcap_{t\leq 0} \varphi_t(M)$$ is called the core of a Liouville domain.
\begin{lemma}\label{lemma:collar}
The map
$$
\begin{aligned}
\psi:  (\partial M \times (0,1], r \alpha) &\to M \setminus \mathrm{Core}(M, X)\\
(p, r) &\mapsto \varphi_{\log (r)}(p) 
\end{aligned}
$$
is a diffeomorphism satisfies $\psi^* \lambda = r \alpha$
\end{lemma}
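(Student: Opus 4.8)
The plan is to prove that $\psi$ is a bijection onto $M\setminus\mathrm{Core}(M,X)$ with smooth inverse, and then to read off $\psi^*\lambda=r\alpha$ from the two identities $\varphi_t^*\lambda=e^t\lambda$ and $\lambda(X)=0$ established just above. The preliminary fact I would record is a transversality/monotonicity statement: pick a boundary-defining function $\rho$ for $M$ near $\partial M$ (so $\rho=0$ on $\partial M$, $\rho<0$ on $\mathrm{int}\,M$ nearby, $d\rho\neq 0$ along $\partial M$); positive transversality of $X$ means $X\cdot\rho>0$ on $\partial M$. Then $s\mapsto\rho(\varphi_s(q))$ is strictly increasing at each of its zeros, hence has at most one zero, so every orbit of $X$ meets $\partial M$ at most once and crosses transversally from inside to outside. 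In particular, for $p\in\partial M$ one has $\varphi_s(p)\in\mathrm{int}\,M$ for all $s<0$ (the flow being defined on $(-\infty,0]$) and $\varphi_s(p)\notin M$ for small $s>0$. I would also note
$$
\mathrm{Core}(M,X)=\{\,q\in M : \varphi_s(q)\text{ is defined and lies in }M\text{ for all }s\geq 0\,\},
$$
which follows from injectivity of each $\varphi_t$ together with the ODE fact that an orbit confined to the compact set $M$ cannot blow up in finite time.

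With this, well-definedness and injectivity are immediate. For $(p,r)\in\partial M\times(0,1]$ we have $\log r\leq 0$, so $q:=\varphi_{\log r}(p)$ is defined and lies in $M$; its forward orbit reaches $p\in\partial M$ at time $-\log r\geq 0$ and then leaves $M$, so $q\notin\mathrm{Core}(M,X)$. If $\psi(p_1,r_1)=\psi(p_2,r_2)$ with $\log r_1\leq\log r_2$, applying $\varphi_{-\log r_2}$ gives $\varphi_{\log r_1-\log r_2}(p_1)=p_2\in\partial M$; since a strictly negative-time flow of a boundary point lies in $\mathrm{int}\,M$, we get $\log r_1=\log r_2$, hence $(p_1,r_1)=(p_2,r_2)$. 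For surjectivity, given $q\in M\setminus\mathrm{Core}(M,X)$, the number $T(q):=\sup\{s\geq 0 : \varphi_{[0,s]}(q)\subset M\}$ is finite by the description of the core; by no blow-up $\varphi_{T(q)}(q)$ is defined, lies in $M=\overline M$, and must lie in $\partial M$ (else $T(q)$ is not maximal). Setting $p:=\varphi_{T(q)}(q)$ and $r:=e^{-T(q)}\in(0,1]$ gives $\psi(p,r)=\varphi_{-T(q)}(p)=q$.

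It remains to see that $\psi$ is a diffeomorphism and to compute $\psi^*\lambda$. Smoothness of $\psi$ is clear, since the flow is smooth in both variables. For $\psi^{-1}(q)=(\varphi_{T(q)}(q),e^{-T(q)})$ it suffices that $q\mapsto T(q)$ be smooth: near $q_0$ the function $G(q,s):=\rho(\varphi_s(q))$ satisfies $G(q_0,T(q_0))=0$ and $\partial_sG(q_0,T(q_0))=(X\cdot\rho)(\varphi_{T(q_0)}(q_0))>0$, so the implicit function theorem yields a smooth local solution $s=\tau(q)$ of $G=0$ with $\tau(q_0)=T(q_0)$, and the at-most-one-crossing property forces $\tau=T$ near $q_0$. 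Hence $\psi$ is a diffeomorphism onto $M\setminus\mathrm{Core}(M,X)$. Finally, write a tangent vector at $(p,r)$ as $v+c\,\partial_r$ with $v\in T_p\partial M$; then $d\psi(v)=d\varphi_{\log r}(v)$, while differentiating $r\mapsto\varphi_{\log r}(p)$ gives $d\psi(\partial_r)=\tfrac1r X(\varphi_{\log r}(p))$. Using $\varphi_{\log r}^*\lambda=r\lambda$ and $\lambda(X)=0$,
$$
(\psi^*\lambda)(v+c\,\partial_r)=\lambda\big(d\varphi_{\log r}(v)\big)+\frac{c}{r}\,\lambda(X)=r\,\lambda(v)=r\,\alpha(v)=(r\alpha)(v+c\,\partial_r),
$$
so $\psi^*\lambda=r\alpha$.

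I expect the main obstacle to be the smoothness of $\psi^{-1}$, i.e. of the first-hitting-time function $T$: one must combine the implicit function theorem with the transversality (at-most-one-crossing) statement to know the local solution genuinely is the first hitting time, and one must be a little careful with the ODE completeness argument underlying the stated description of $\mathrm{Core}(M,X)$. The bijectivity and the form computation are then routine.
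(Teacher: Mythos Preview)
Your proof is correct and follows essentially the same approach as the paper: establish that $\psi$ is a bijection and then compute $\psi^*\lambda$ by splitting tangent vectors and invoking $\varphi_t^*\lambda=e^t\lambda$ together with $\lambda(X)=0$. The paper's proof is much terser, citing only ``uniqueness of solutions of ODEs'' for injectivity and ``the definition of $\mathrm{Core}(M,X)$'' for surjectivity, and does not address smoothness of $\psi^{-1}$ at all; your implicit-function-theorem argument for the first-hitting time genuinely fills a gap the paper leaves, and your pullback computation is line-for-line the same as the paper's.
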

\begin{proof}
It follows from the uniqueness of the solution of ODEs that $\psi$ is injective, and from the definition of $\mathrm{Core}(M, X)$ that $\psi$ is surjective. Since $\varphi_t^* \lambda = e^t \lambda$, and $\alpha = \lambda \vert_{\partial M}$ we have that for $(Y, s) \in T_p \partial M \times \R$
$$
\begin{aligned}
\psi^* \lambda (Y,s) &= \psi^* \lambda (Y, 0) + \psi^* \lambda (0,s) \\
&= \varphi^*_{\log r} \lambda (Y) + s \lambda \left( \frac{d}{dr} \varphi_{\log r} (p) \right)\\
&= e^{\log r} \lambda ( Y ) + s \lambda \left( \frac{1}{r} X(\varphi_{\log r} (p) )\right) = r \alpha (Y).
\end{aligned}
$$
\end{proof}

It follows from the previous lemma that we can \textit{complete} the Liouville domain by gluing the positive part $(\partial M \times [1, +\infty), r \alpha)$ of the symplectization of $(\partial M, \alpha)$. In the collar neighborhood, $X$ is identified with $\partial_r$, so this provides the extension $\widehat{X}$ on $\partial M \times [1, +\infty)$. The vector field $\widehat{X}$ is complete, i.e., its flow $\varphi_t$ is defined for every $t\in \R$; this justifies the name completion of a Liouville domain. We denote the completion of $(M, \lambda, X)$ by $(\widehat{M},\widehat{\lambda}, \widehat{X})$ and we call it \textit{Liouville manifold}. One can easily check that symplectizations from Examples \ref{ex:ball} and \ref{ex:codisk} are identified respectively with $(\R^{2n}, \sum 1/2 x_i dy_i - 1/2y_i dx_i)$ and $(T^*Q, \lambda_{can})$. Note that in the case of the ball $B^{2n}$, the radial coordinate $r$ is the norm squared of $(x_1,...,x_n,y_1,...,y_n)$ and not the norm. In the case of a codisk bundle $D^*_g Q$, the radial coordinate $r$ corresponds to $\|p \|^*_g$.

Given a smooth function $H: \widehat{M} \to \R$, one can associate to $H$ a vector field $X_H$ that is $\omega$-dual to $dH$, which means:
$$
i_{X_H} \omega = -dH.
$$
\begin{defn}
The function $H$ is a  \textit{Hamiltonian}, and the vector field $X_H$ is a \textit{Hamiltonian vector field}.
\end{defn}
Since we work with a class of symplectic manifolds $\widehat{M}$ which are not compact, the flow of $X_H$ might not be defined. However, the class of Hamiltonians that is of interest to us will have complete Hamiltonian vector fields $X_H$. We will also allow that our Hamiltonian functions are \textit{time dependent}: $H:\widehat{M} \times [0,1] \to \R$. It follows from Cartan's magic formula that the time-one map $\varphi^1_{H_t}$ of $X_{H_t}$ preserves the symplectic form. 

\begin{defn}\label{def:ham_linear_at_inf}
Hamiltonian $H: \widehat{M} \times [0,1] \to \R$ is called contact at infinity if there exists a smooth function $h_t :\partial M \to \R$ and $b \in \R$ such that
$$
H_t(x,r) = h_t(x) r +b, \text{ for } r \geq 1.
$$
\end{defn}
The function $h_t: \partial M \to \R$ is a \textit{contact Hamiltonian}, and the dynamics of $H_t$ is related to the contact isotopy of the contact vector field $X_{h_t}$. Recall  that for a contact Hamiltonian $h_t: \partial M \to \R$ on $(\partial M, \alpha)$, the contact vector field $X_{h_t}$ is uniquely determined by:
$$
\begin{aligned}
\alpha(X_{h_t}) &= h_t,\\
d h_t + i_{X_{h_t}} d \alpha &= dh_t(R_\alpha) \alpha,
\end{aligned}
$$
where $R_{\alpha}$ is the Reeb vector field uniquely determined by the equations from above for $h_t = 1$. From these equations, it easily follows that the Hamiltonian vector field $X_{H_t}$ of $H_t$ is given by:
\begin{equation}\label{eq:contact_at_inf}
	X_{H_t} = (X_{h_t}, -r dh_t(R_{\alpha})) \in T_x \partial M \times \R,
\end{equation}
for $r \geq 1$. In particular, if $h_t$ is constant, on each level $r = r_0 \geq 1$ the flow $X_{H_t}$ is a constant reparametrization of the Reeb flow. If a function $h: \partial M \to \R$ is independent of $t$, and strictly positive, then the flow of the Hamiltonian $H(x,r) = h(x) r + b$ can be seen as a Reeb flow on a hypersurface
$$
Y_h:= \left\{(x,r) \in \partial M \times (0, \infty) \mid r =\frac{1}{h(x)}\right\}.
$$  
The contact form on $Y_h$ is given by $\alpha_{Y_h} := \widehat{\lambda} \vert _{Y_h}$, and $(Y_h, \alpha_{Y_h})$ is strictly contactomorphic to $(\partial M,  \alpha / h)$. Every hypersurface $Y$ that is transverse to $\partial_r$ with the contact form given by the restriction of $\lambda$ is strictly contactomorphic to $(\partial M, \alpha/h)$ for some positive $h$. Such $Y$ gives rise to a different Liouville domain $(M_Y, \lambda, X) \subset \widehat{M}$ whose completion is again $\widehat{M}$. This perspective is useful since we want to allow ourselves to change a contact structure on $\partial M$. For more details about contact topology, we refer to \cite{Ge08}.

\begin{ex}
Let $H: \R^{2n} \to \R$, $H(x,y) =\frac{1}{2}\sum (x_i^2 + y_i^2)$. The Hamiltonian vector field is given by $X_H = (-y_1,...,-y_n, x_1,...,x_n)$ hence, the flow is 
$$
\varphi^t_H(z_1,...,z_n) = (e^{i t} z_1, ..., e^{it} z_n).
$$
\end{ex}

A smooth submanifold $L \subset \widehat{M}$ is \textit{Lagrangian} if $\omega\vert_{TL} = 0$ and $2\dim L = \dim \widehat{M}$.
\begin{defn}\label{def:lagr}
A Lagrangian submanifold $L \subset (\widehat{M}, \lambda)$ is \textit{exact} if there is a function $f_L : L \to \R$ such that $$\lambda\vert_{TL} = d f_L.$$ $L$ is \textit{cylindrical} if there exist a Legendrian\footnote{Submanifold $\Lambda$ of a contact manifold $Y, \alpha$ is Legendrian if $\alpha\vert_{T \Lambda} = 0$, and $2 \dim \Lambda + 1 = \dim Y$.} $\Lambda \subset \partial M$ such that
$$
L \cap (\partial M \times [1, \infty) ) = \Lambda \times [1, \infty) .
$$
\end{defn}

An equivalent way to define a cylindrical Lagrangian $L$ is to require that the Liouville vector field $X$ is tangent to $L$ for $r \geq 1$. 

If $L$ is exact and cylindrical, then $f_L$ is locally constant outside of a compact set. If in addition $\Lambda$ is connected, we can choose $f_L (x,r)= 0$ for $r \geq 1.$ One can deform the Liouville form $\lambda$  to $\lambda_{f_L} = \lambda - d f_L$, where we write $f_L$ for a smooth extension from $L$ to $\widehat{M}$. Hence $\lambda_{f_L} \vert_L = 0$. This changes the Liouville vector field, but it does not change the symplectic structure. From now on, we assume for simplicity that all Lagrangians $L$ satisfy $\lambda \vert L = 0$. 

\begin{ex}
Submanifold $L = \R^n \times \{ 0 \} \subset \R^{2n}$ is an exact cylindrical Lagrangian.
\end{ex}

\begin{ex}
Let $N \subset Q$ be a submanifold. The conormal bundle
$$
\nu^*N := \{ p \in T^*_N Q \mid p \vert_{TN} = 0 \},
$$
is an exact cylindrical Lagrangian. If $N := \{q\}$ we have that $\nu^*N = T^*_q Q$, and if $N:= Q$, $Q \cong \nu^*Q= \mathcal{O}_Q \subset T^*Q$ where $\mathcal{O}_Q$ is the zero section.  
\end{ex}
\begin{defn}
Map $J \in End(T \widehat{M})$ is an \textit{almost complex structure} if $$
J^2 = - \mathrm{id}.
$$
An almost complex structure $J$ is \textit{compatible} with $\omega$ if $\omega(\cdot, J \cdot)$ is a Riemannian metric. Since  $\omega(\cdot, J \cdot)$ is symmetric it is straightforward that
$$
\omega(J\cdot, J \cdot) = \omega(\cdot, \cdot).
$$
\end{defn}

It follows from \cite[Proposition 4.1.1]{MS17} that the space $\mathcal{J}_{comp}$ of compatible almost complex structure is non-empty and contractible. We will further assume that $J$ is of $SFT$-type, which means that
$$
\lambda \circ J = dr, \text{ for } r\geq 1.
$$

\section{Wrapped Floer homology}\label{sec:wrapped_floer}
The Floer homology was introduced in \cite{Fl88}. The main motivation was to solve the Arnold conjecture about the number of 1-periodic orbits of a Hamiltonian system on a closed symplectic manifold $(P, \omega)$. The original construction by Floer was later generalized and modified in many different setups. In the original setup, Floer considered a closed Lagrangian $L \subset P$ which satisfies $\pi_2(P,L) = 0$, and to a Hamiltonian $H: P \times [0,1] \to \R$ he associated a group $HF_*(L; H)$. He showed that $HF_*(L; H)$ does not depend on $H$, and in the case when $H$ is an extension of a $C^2$-small function $f: L \to \R$, $HF_*(L;H)$ is identified with the Morse homology $HM_*(L,f).$ This is related to the question of periodic orbits by setting Lagrangian $L$ to be the diagonal $\Delta \subset (P \times P, \omega \oplus (-\omega))$. 

The wrapped Floer homology is Floer homology for \textit{exact} Lagrangians, which are possibly non-compact inside of a non-compact symplectic manifold $(\widehat{M}, \widehat{\lambda})$. It was introduced in \cite{ASch06} for the case of a fiber $T_q^* N$ in the cotangent bundle $T^*N$, extended to the case of conormal bundle $\nu^*Q$ of a submanifold $Q \subset N$ in \cite{APS08} and generalized in \cite{AS10} to the class of exact cylindrical Lagrangians in Liouville manifolds. 

The main idea is to assign a group $HW_*(L_0, L_1)$ to a pair of exact cylindrical Lagrangians, where the generators of the chain complex are Hamiltonian $1$-chords with endpoints on $L_0$ and $L_1$ and the differential counts solutions of a PDE defined on $\R \times [0,1]$ and is assympotic to Hamiltonian chords at infinity. This PDE is obtained as a gradient of an action functional on the space of paths. The groups $HW_*(L_0,L_1)$ are graded with the Maslov index. The grading is explained in \S\ref{sec:maslov}. 

The condition $\pi_2(P,L) = 0$ in \cite{Fl88} was used to avoid bubbling of holomorphic disks and spheres so that one has the controlled behaviour of spaces of solutions of the equation used to define the differential. The differential is introduced in \S\ref{sec:diff}. In our situation, the assumption $\pi_2(P,L) = 0$ is replaced by the exactness of Lagrangians and of the symplectic manifold. The additional technical complication compared to \cite{Fl88} is the compactness of the space of solutions since both Lagrangians and symplectic manifold are non-compact. This is resolved by a certain type of maximum principle. These issues are addressed in \S\ref{sec:comp}. 

\subsection{The ungraded complex $CF(L_0, L_1 ; H)$}

One way to think about the Floer homology is as an infinite-dimensional analogue of Morse theory. Let $(\widehat{M}, \widehat{\lambda})$ be the Liouville manifold, and let  $L_0$ and $L_1$ be two proper Lagrangian submanifolds, with $\widehat{\lambda}\vert_{L_i} = 0$. The space on which we want to do the ``Morse theory" is the space of smooth paths with endpoints on $L_0$ and $L_1$:
$$
\mathcal{P}_{L_0, L_1} = \{x: [0,1] \to \widehat{M} \mid x(0) \in L_0 \text{ and } x(1) \in L_1\}.
$$
The role of a Morse function will have the action functional:
$$
\mathcal{A}_H (x) = \int x^* \widehat{\lambda} - \int_0^1 H_t(x(t)) dt,
$$
where $H_t$ is contact at infinity. 

\begin{lemma}\label{lemma:diff_action}
Critical points of $\mathcal{A}_H$ are Hamiltonian chords  $x:[0,1] \to \widehat{M}$ with endpoints on $L_i$.
\end{lemma}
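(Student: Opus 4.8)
The plan is to compute the first variation of the action functional $\mathcal{A}_H$ and read off when it vanishes. First I would take a smooth path $x \in \mathcal{P}_{L_0,L_1}$ together with a variation $x_s$, $s \in (-\varepsilon,\varepsilon)$, with $x_0 = x$, $x_s(0) \in L_0$, $x_s(1) \in L_1$, and set $\xi(t) = \frac{\partial}{\partial s}\big|_{s=0} x_s(t) \in T_{x(t)}\widehat M$; since the endpoints stay on $L_i$ we have $\xi(0) \in T_{x(0)}L_0$ and $\xi(1) \in T_{x(1)}L_1$. Differentiating the two terms of $\mathcal{A}_H(x_s)$ in $s$ and integrating by parts, the boundary term from the $\widehat\lambda$-term is $\widehat\lambda(\xi)\big|_0^1 = \widehat\lambda_{x(1)}(\xi(1)) - \widehat\lambda_{x(0)}(\xi(0))$, which vanishes because $\widehat\lambda\vert_{L_i} = 0$. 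What remains is an integral of the form $\int_0^1 \omega\big(\dot x - X_{H_t}(x),\, \xi\big)\, dt$ (using $i_{X_H}\omega = -dH$ to rewrite $dH_t(\xi)$, and $\omega = d\widehat\lambda$ together with Cartan's formula / the identity $d\widehat\lambda(\dot x, \xi) = \frac{d}{dt}\widehat\lambda(\xi) - $ appropriate terms to reorganize the $x^*\widehat\lambda$ contribution). The precise bookkeeping with signs and the Leibniz rule is the routine part.

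Next I would argue that $d\mathcal{A}_H(x)\xi = 0$ for \emph{all} admissible $\xi$ forces $\dot x(t) = X_{H_t}(x(t))$ for all $t \in [0,1]$. Here one uses that the admissible variations $\xi$ with $\xi(0) \in T L_0$, $\xi(1) \in T L_1$ are rich enough: for interior $t$ the values $\xi(t)$ range over all of $T_{x(t)}\widehat M$, so nondegeneracy of $\omega$ gives $\dot x - X_{H_t}(x) = 0$ on $(0,1)$, hence on $[0,1]$ by continuity. (One must also check the converse — if $\dot x = X_{H_t}(x)$ then the interior integral vanishes identically and the boundary terms vanish as above, so $x$ is critical — but this is immediate once the first-variation formula is in hand.) The conclusion is exactly that critical points of $\mathcal{A}_H$ are the Hamiltonian chords $x\colon[0,1]\to\widehat M$ of $X_{H_t}$ with $x(0)\in L_0$, $x(1)\in L_1$.

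The main obstacle is not conceptual but the careful handling of the boundary terms and the fact that $\widehat M$ is noncompact: one should make sure the variations $x_s$ can be taken with $x_s(0)$ tracing a curve in $L_0$ and $x_s(1)$ a curve in $L_1$ (this is where properness/cylindricity of the $L_i$ and the hypothesis $\widehat\lambda\vert_{L_i}=0$ enter to kill the boundary contribution cleanly), and that all integrals converge — which holds since $[0,1]$ is compact and everything is smooth along the fixed path. I expect the only genuinely delicate point to be getting the signs right in the integration by parts so that the Hamiltonian equation appears with the correct orientation convention matching $i_{X_H}\omega = -dH$; everything else is a standard computation.
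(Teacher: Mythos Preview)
Your proposal is correct and follows essentially the same route as the paper: compute the first variation using (a generalization of) Cartan's formula, kill the boundary terms via $\widehat\lambda\vert_{L_i}=0$, and conclude from the nondegeneracy of $\omega$ that $\dot x = X_{H_t}(x)$. The paper's write-up is slightly terser (it does not spell out the converse or the ``richness of variations'' argument), but the argument is the same.
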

\begin{proof}
Let $x_s$ be a path in $\mathcal{P}_{L_0, L_1}$ generated by $\frac{d}{ds} x_s \vert_{s=0}=\xi$. The path $x$ is a critical point of $\mathcal{A}_H$ if and only if for every deformation $\xi$:

$$
\begin{aligned}
0 &= d \mathcal{A}_H (\xi) = \frac{d}{ds} \mathcal{A}_H (x_s) \vert_{s=0}\\
&= \int x^*( d (i_{\xi}\widehat{\lambda}) + i_{\xi} d \widehat{\lambda}) - \int_0^1 dH_t(\xi (x(t)) dt = \\
&= \int_0^1\omega(\xi, x') dt +\int_0^1 \omega(X_{H_t}, \xi) dt + \widehat{\lambda} (\xi(x(1))) - \widehat{\lambda} (\xi(x(0)))\\
&= \int_0^1 \omega(\xi, x' - X_{H_t}(x)) dt.
\end{aligned}
$$

Here we have used generalization of Cartan's magic formula: $$\frac{d}{ds} x_s^* \widehat{\lambda}\vert_{s=0} = x_0^*( d (i_{\xi}\widehat{\lambda}) + i_{\xi} d \widehat{\lambda}),$$ and $\widehat{\lambda} (\xi(i)) = 0$ holds because $\xi(i) \in T_{x(i)} L_i$. Since $\omega$ is non-degenerate we have $x'(t) = X_{H_t} (x(t))$.
\end{proof}
We say that contact at infinity Hamiltonian $H_t$ is \textit{admissible} if the image $\varphi^1_{H_t} (L_0)$ of $L_0$ under the time one map $\varphi^1_{H_t}$ is transverse to $L_1$. Since Lagrangians $L_i$ and $X_H$ are invariant under the Liouville flow the assumption $\varphi^1_{H_t} (L_0) \pitchfork L_1$ implies that there are no Hamiltonian chords entirely contained in the region $r \geq 1$, in particular, the set of critical points $\crit (\mathcal{A}_H)$ is finite.

\begin{defn}
The ungraded Floer chain group is
$$
CF(L_0, L_1 ; H) := \bigoplus_{x \in \crit (\mathcal{A}_H)} \Z_2 \langle x \rangle.
$$
\end{defn}
\subsection{The Maslov index}\label{sec:maslov}
The main purpose of this section is to associate an integer $\mu(x)$ for each Hamiltonian chord $x\in \crit{\mathcal{A}_H}$. The assumptions on $c_1(\widehat{M})$ and the Maslov classes $\mu^{\Theta}_{L_i}$ from Theorem \ref{thm:main} are essential to have well defined $\Z$ grading. The approach to the grading is influenced by \cite{Se00, Au14}, we also refer to a foundational result \cite{RS93}. 

The Lagrangian Grassmannian in $\R^{2n}$ is
$$
\mathcal{L}(n):= \{L \leq \R^{2n} \mid L \text{ is a Lagrangian subspace} \}.
$$
One can show that $U(n)$ acts transitively on $\mathcal{L}(n)$, and the stabilizer is $O(n)$ hence
$$
\mathcal{L}(n) \cong U(n) / O(n).
$$
Furthermore, we know that $\det:U(n) \to S^1$ induces an isomorphism $\pi_1(U(n)) \cong \Z$. It follows that $\det^2: U(n) / O(n) \to S^1$ is well defined and that this map induces an isomorphism $\pi_1(\mathcal{L}(n)) \cong \Z$. For more details see \cite[\S2.3]{MS17} and \cite[\S10.1]{FVK18}. 

In order to assign an integer to a loop of Lagrangians planes in $T \widehat{M}$, we need to have $\det^2$ globally defined on each $T_p \widehat{M}$. This is equivalent to the triviality of the square of the complex determinant line bundle $(\bigwedge^n_{\C} T\widehat{M})^{\otimes 2}$. Since for a complex vector bundle $E$ we have $c_1(E) = c_1 (\det (E))$, the triviality of $(\bigwedge^n_{\C} T\widehat{M})^{\otimes 2}$ is equivalent to $2c_1(\widehat{M}) = 0$. 

\begin{ex}
If the tangent bundle $TM$ of a symplectic manifold has a Lagrangian subbundle $F \subset TM$, then $2c_1(M) = 0$, if we assume that $F$ is orientable we get that $c_1(M) = 0.$ In particular, for $M = T^*N$ we have $2c_1(T^*N)=0$, and if $N$ is orientable $c_1(T^*N) = 0.$ More generally, if $M$ is foliated by Lagrangian submanifolds, then the foliation induces a Lagrangian subbundle.
The triviality is obtained by using the section of $(\bigwedge_{\R}^n F)^{\otimes 2}$, or respectively of $\bigwedge_{\R}^n F$.
\end{ex}

Under the assumption $2c_1(M)=0$, one can assign a \textit{Maslov class} $\mu^{\Theta}_L \in H^1(L)$ to every Lagrangian submanifold $L \subset M$ and a non-vanishing section $\Theta$ of $(\bigwedge^n_{\C} T\widehat{M})^{\otimes 2}$.  Define $\mu_L^{\Theta} \in H^1(L) \cong [L; S^1],$
by $$\mu_L^{\Theta} (p) = Arg (\Theta(p), \Omega_L \otimes \Omega_L),$$
where for $u, v \in \C^*$, $Arg(u,v)$ is the unique element $w \in S^1$ such that $u$ and $w v$ are positively proportional, and $\Omega_L \in \bigwedge_{\R}^n T_p L$ is any non-zero element. 

The relative Chern class $2c_1(M, L) \in H^2(M,L)$ is defined as the Poincar\'e dual of a zero set of a certain section. Let $s:M \to (\bigwedge^n_{\C} T M)^{\otimes 2}$ be a section, such that $s\vert_L \in (\bigwedge^n_{\R} T L)^{\otimes 2}$. If $s$ is transverse to the zero section, then 
$$
2c_1(M,L) := \mathrm{PD}(s^{-1}(0)) \in H^2(M,L).$$ 
From the definition, it follows that $2 c_1(M,L)$ is mapped to $2c_1(M)$. Hence, the assumption $2c_1(M)=0$ in Theorem \ref{thm:main} is redundant in the case $L=L_0 = L_1$. 

From now on, we fix a reference path $\gamma_{[x]}$ for each connected component $[x]$ of $\mathcal{P}_{L_0, L_1}$. We also fix a section $\Gamma_{[x]}$ of $\gamma_{[x]}^* \mathcal{L}(TM)$, where $\mathcal{L}(TM)$ is a fiber bundle over $M$ with fibers which are all Lagrangian subspaces of $T_p M$, hence it is identified with $\mathcal{L}(n)$. We also require that $\Gamma_{[x]}(i) = T_{\gamma_{[x]} (i)} L_i$. 

Given an admissible Hamiltonian $H_t$, we want to assign an integer $\mu(x)$ to every Hamiltonian chord $x \in \crit \mathcal{A}_{H}$. Let $\gamma_{[x]}$ be a reference path that is in the same connected component of $\mathcal{P}_{L_0, L_1}$ as $x$, and let $u:[0,1] \times [0,1] \to M$ be a path in $\mathcal{P}_{L_0, L_1}$ such that $u(0,t)= \gamma_{[x]}(t)$, and $u(1,t) = x(t)$. There is a symplectic trivialization of $\Phi:u^* TM \cong [0,1] \times [0,1] \times \R^{2n}$, we choose the trivialization so that on the boundary of the square $[0,1]\times[0,1]$ it induces the section of $(\bigwedge^n_{\C} TM)^{\otimes 2}$ which is equal to $\Theta$. 

In order to define a loop of Lagrangians, we need one more auxiliary choice. For two transverse Lagrangian spaces $L_i \leq \R^{2n}$ there is a symplectic matrix $A\in Sp(n)$ such that $A L_0 = \R^n \times \{ 0\}$, and $A L_1 = \{0\} \times \R^n$. A \textit{ canonical short path } is the path of Lagrangians $L_t := A^{-1}( (e^{-i \pi t /2} \R)^n)$ for $t \in [0,1]$. Now, define a loop of Lagrangians

$$
\alpha(s)= \begin{cases}\Phi(T_{u(5s,0)} L_0), &s \in[0,1/5]\\
					\Phi( d \varphi^{5s-1}_H T_{x(0)} L_0), &s \in [1/5, 2/5]\\
					A_{x(1)}(5s-2), &s \in [2/5, 3/5]\\
					\Phi(T_{u(4-5s,1)} L_1), &s\in [3/5, 4/5]\\
					\Phi(\Gamma_{[x]}(5-5s)), &s\in [4/5, 1].
					 \end{cases}
$$
Here $A_{x(1)}$ is the canonical short path from $\Phi( d \varphi^{1}_H T_{x(0)} L_0)$ to $\Phi(T_{x(1)} L_1)$.

\begin{defn}
The Maslov index of $x \in \crit \mathcal{A}_{H}$ is given by
$$
\mu(x):= \mathrm{det}^2(\alpha).
$$
\end{defn}
Since we have forced a relative homotopy class of trivializations, we have that this does not depend on the choice of trivialization. It is left to show that the index $\mu(x)$ is independent of the choice of $u:[0,1] \times [0,1] \to M$ which joins $\gamma_{[x]}$ and $x$. This follows from the assumption that the Maslov classes $\mu^\Theta_{L_i}$ are exact. Indeed, two different choices $u_1$ and $u_2$ of relative homotopies are creating loops $\beta_i(t)=u_1(i,2t) \# u_2(i,2-2t) \in L_i$. Since trivializations over $u_1$ and $u_2$ were chosen to be compatible with $\Theta$, we have that 
$$\mathrm{det}^2(\Phi(\beta_i)) = \mu^{\Theta}_{L_i} ([\beta_i] ) = 0,$$ 
where we have used the notation $\beta_i$ both as a loop on $L_i$, and loops of Lagrangians in $\mathcal{L}(TM)$.

The alternative way would be to define $$\mu(x):= \mu_{RS}(\Lambda_0, \Lambda_1)-\frac{n}{2},$$ where  $\mu_{RS}(\Lambda_0, \Lambda_1)$ is the Robbin-Salamon index (\cite{RS93}) of two paths $\Lambda_0(t),\Lambda_1(t)$ of Lagrangians given by
$$
\begin{aligned}
\Lambda_0(t) &= \begin{cases} \Phi(\Gamma_{[x]}(1-3t)), &t\in [0, 1/3]\\
					\Phi(T_{u(3t-1,0)} L_0), &t \in[1/3,2/3]\\
					\Phi( d \varphi^{3t-2}_H T_{x(0)} L_0), &t \in [2/3, 1]
					 \end{cases}\\
\Lambda_1(t) &= \Phi(T_{u(t,1)} L_1)	.	
\end{aligned}			 
$$
The advantage of this approach is that it is well-defined for degenerate Hamiltonians. Below, we state some properties of the Robin-Salamon index $\mu_{RS}$ without the proof. 

\begin{thm}
\begin{itemize}
\item \textbf{(Naturality)} For a path $\Psi: [0,1] \to Sp(n)$, $$\mu_{RS}(\Psi \Lambda_0, \Psi \Lambda_1) = \mu_{RS}(\Lambda_0, \Lambda_1).$$
\item \textbf{(Concatenation)} For $c \in [0,1]$, $$\mu_{RS}(\Lambda_0, \Lambda_1) = \mu_{RS}(\Lambda_0 \vert_{[0,c]}, \Lambda_1\vert_{[0,c]}) + \mu_{RS}(\Lambda_0 \vert_{[c,1]}, \Lambda_1\vert_{[c,1]}).$$
\item \textbf{(Product)} $\mu_{RS}(\Lambda_0' \oplus \Lambda_0 '', \Lambda_1' \oplus \Lambda_1'') = \mu_{RS}(\Lambda_0', \Lambda_1')+\mu_{RS}( \Lambda_1'',  \Lambda_1'').$
\item \textbf{(Localization)} If $\Lambda_1(t) = V:=\R^n \times \{0\}$, and $$\Lambda_0(t) = \{(x, A(t) x) \mid x\in \R^n\},$$ where $A(t)$ is a path of symmetric matrices then

$$
\mu_{RS}(\Lambda_0, \Lambda_1) = \frac{1}{2} \mathrm{sign} A(1) - \frac{1}{2} \mathrm{sign} A(0).
$$
\item \textbf{(Homotopy)} Paths $\Lambda_0, \Lambda_1: [0,1] \to \mathcal{L}(n)$ with $\Lambda_0(0) = \Lambda_1(0)$, and $\Lambda_0(1) = \Lambda_1(1)$ are homotopic if and only if $\mu_{RS}(\Lambda_0, V) = \mu_{RS}(\Lambda_1, V)$.
\end{itemize}
\end{thm}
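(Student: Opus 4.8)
The plan is to deduce all five items from the crossing--form description of the Robbin-Salamon index in \cite{RS93}. The pair version reduces to the one--path version by the standard device: setting $\Phi(t):=\Lambda_0(t)\times\Lambda_1(t)$, a Lagrangian path in $(\R^{2n}\times\R^{2n},\,(-\omega_0)\oplus\omega_0)$, and letting $\Delta$ be the diagonal (a Lagrangian of this space, with $\Phi(t)\cap\Delta\cong\Lambda_0(t)\cap\Lambda_1(t)$), one has $\mu_{RS}(\Lambda_0,\Lambda_1)=\mu_{RS}(\Phi,\Delta)$. So I recall, for a Lagrangian path $\Lambda$ and a fixed Lagrangian $V$: a time $t$ is a \emph{crossing} if $\Lambda(t)\cap V\neq\{0\}$; choosing a Lagrangian complement $W$ of $\Lambda(t)$, one writes $\Lambda(s)$ for $s$ near $t$ as the graph of a linear map $w(s)\colon\Lambda(t)\to W$ with $w(t)=0$, and the \emph{crossing form} $\Gamma(\Lambda,V,t)$ is the quadratic form $v\mapsto\tfrac{d}{ds}\big|_{s=t}\omega_0\bigl(v,w(s)v\bigr)$ on $\Lambda(t)\cap V$, independent of $W$. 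Regular crossings (those with $\Gamma$ nondegenerate) are isolated, a path generic rel endpoints has only regular crossings, and for such a path
\begin{equation}\label{eq:rsdef}
\mu_{RS}(\Lambda,V)=\tfrac12\mathrm{sign}\,\Gamma(\Lambda,V,0)+\sum_{0<t<1}\mathrm{sign}\,\Gamma(\Lambda,V,t)+\tfrac12\mathrm{sign}\,\Gamma(\Lambda,V,1).
\end{equation}
The one substantive input, which I would quote from \cite{RS93} rather than reprove, is that the right side of \eqref{eq:rsdef} is invariant under homotopy of $\Lambda$ rel endpoints, so it extends unambiguously to all paths.

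Granting \eqref{eq:rsdef}, Naturality, Concatenation and Product are formal. \textbf{Naturality}: the crossing form is natural under symplectic paths (\cite{RS93}), so a smooth path $\Psi(t)\in Sp(n)$ leaves the crossing set of the pair unchanged --- since $\Psi(t)\bigl(\Lambda_0(t)\cap\Lambda_1(t)\bigr)=\Psi(t)\Lambda_0(t)\cap\Psi(t)\Lambda_1(t)$ --- and carries each crossing form to one with the same signature; summing \eqref{eq:rsdef} gives the identity, with no need to homotope $\Psi$ to a constant. \textbf{Concatenation}: after a homotopy rel endpoints we may assume $c$ is either a non-crossing (then \eqref{eq:rsdef} splits, the new endpoint terms being zero) or a regular crossing (then $c$ contributes $\tfrac12\mathrm{sign}\,\Gamma$ as the right endpoint of $[0,c]$ and $\tfrac12\mathrm{sign}\,\Gamma$ as the left endpoint of $[c,1]$, totalling $\mathrm{sign}\,\Gamma$, its weight as an interior crossing of $[0,1]$). \textbf{Product}: the intersection and the graph maps of $\Lambda_0'\oplus\Lambda_0''$ against $\Lambda_1'\oplus\Lambda_1''$ split as direct sums, so the crossing forms split and the signatures add; here the stated right-hand side should read $\mu_{RS}(\Lambda_0',\Lambda_1')+\mu_{RS}(\Lambda_0'',\Lambda_1'')$.

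The computational heart is \textbf{Localization}. With $\Lambda_1\equiv V=\R^n\times\{0\}$ and $\Lambda_0(t)=\{(x,A(t)x):x\in\R^n\}$, $A(t)$ symmetric, taking $W=\{0\}\times\R^n$ one computes that $t$ is a crossing iff $\ker A(t)\neq\{0\}$ and that $\Gamma(\Lambda_0,V,t)$ is the restriction of $v\mapsto\langle v,\dot A(t)v\rangle$ to $\ker A(t)$. The step function $f(t):=\tfrac12\mathrm{sign}\,A(t)$ is locally constant off the crossings, and the elementary eigenvalue--counting fact --- an eigenvalue of $A(t)$ crossing $0$ with nonzero speed changes $\mathrm{sign}\,A$ by $\pm2$ according to the sign of that speed --- shows that at a regular crossing $t_0$ the jump of $f$ equals $\tfrac12\mathrm{sign}\,\Gamma(\Lambda_0,V,t_0)$, while at the ends $f(0^+)=f(0)+\tfrac12\mathrm{sign}\,\Gamma$ and $f(1)=f(1^-)+\tfrac12\mathrm{sign}\,\Gamma$. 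Hence, after a homotopy rel endpoints making all crossings regular (which alters neither side), summing the interior jumps and adding the two endpoint half-weights telescopes \eqref{eq:rsdef} to $f(1)-f(0)=\tfrac12\mathrm{sign}\,A(1)-\tfrac12\mathrm{sign}\,A(0)$. Finally, \textbf{Homotopy}: by Concatenation $\mu_{RS}(\Lambda_0,V)-\mu_{RS}(\Lambda_1,V)=\mu_{RS}(\Lambda_0\#\overline{\Lambda_1},V)$, and the Maslov index of a \emph{loop} against any fixed Lagrangian equals its class under $\mathrm{det}^2\colon\pi_1(\mathcal L(n))\xrightarrow{\ \sim\ }\Z$ (the normalization built into $\mu_{RS}$); hence it vanishes exactly when $\Lambda_0\#\overline{\Lambda_1}$ is nullhomotopic, i.e. when $\Lambda_0$ and $\Lambda_1$ are homotopic rel endpoints. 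The main obstacle is not any single item but the reliance on the homotopy invariance of \eqref{eq:rsdef}; given that, the only genuine computation is the eigenvalue--counting lemma behind Localization together with the bookkeeping of the half-integer endpoint weights that fixes the normalizations elsewhere.
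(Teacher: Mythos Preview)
The paper does not prove this theorem: it explicitly introduces it with ``Below, we state some properties of the Robbin--Salamon index $\mu_{RS}$ without the proof'' and refers to \cite{RS93}. So there is no proof in the paper to compare against; your proposal goes strictly beyond what the paper does by actually sketching the arguments.

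Your sketch is the standard one from \cite{RS93} and is essentially correct. One small slip in the Localization argument: at an \emph{interior} regular crossing $t_0$, an eigenvalue of $A(t)$ passing through $0$ with positive speed changes $\mathrm{sign}\,A$ by $+2$ (from $-1$ to $+1$), not by $+1$; hence the jump of $f(t)=\tfrac12\,\mathrm{sign}\,A(t)$ across $t_0$ equals $\mathrm{sign}\,\Gamma(\Lambda_0,V,t_0)$, not $\tfrac12\,\mathrm{sign}\,\Gamma$. Your endpoint formulas $f(0^+)=f(0)+\tfrac12\,\mathrm{sign}\,\Gamma(0)$ and $f(1)=f(1^-)+\tfrac12\,\mathrm{sign}\,\Gamma(1)$ are correct (there the eigenvalue moves between $0$ and a nonzero value, changing $\mathrm{sign}\,A$ by $\pm1$), and with the corrected interior jump the telescoping sum reproduces \eqref{eq:rsdef} exactly as you claim. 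You also correctly flag the typo in the Product statement: the right-hand side should read $\mu_{RS}(\Lambda_0',\Lambda_1')+\mu_{RS}(\Lambda_0'',\Lambda_1'')$.
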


\begin{ex}\label{ex:index}
Let $L_i= \R \times \{0\}$, and $H_k(z) = \frac{2k+1 }{4} \pi|z|^2$,
$$
X_{H_k}(x,y) = (-\partial_y {H_k}, \partial_x {H_k}) = \left(- \left(k+\frac{1}{2}\right)  \pi y, \left(k+\frac{1}{2}\right) \pi x\right)  = i \left(k + \frac{1}{2}\right)\pi z .
$$
The flow is given by
$$
\varphi^t_{H_k}(z) = e^{i (k+1/2) \pi t} z.
$$
Set $\gamma_{[x]} (t) = 0$, and $\Gamma_{[x]}(t) = \R \times \{0\}$. For every $k\in \N$ the unique chord is given by $x_k(t) = 0$, it is non-degenerate since $\varphi^1_{H_k}(\R \times \{0\}) = \{0\} \times \R$, which intersects $\R \times \{0\}$ transversally. One easily verifies that
$$
\mu(x) = k,
$$
since $\mu$ measures the number of completed half-rotations.
\end{ex}
A similar calculation will appear later in Section \ref{sec:inv_handle}.

\subsection{Differential}\label{sec:diff}
In this section we define the differential $$d:CF_k(L_0, L_1;H_t) \to CF_{k-1}(L_0, L_1;H_t).$$ The idea is to study the space of solutions of a PDE that replaces the gradient flow in the Morse case. Consider an admissible Hamiltonian $H_t$, an almost complex structure $J_t$.  On the tangent space of $\mathcal{P}_{L_0, L_1}$, there is an inner product induced by $\omega$ and $J_t$. For $x \in \mathcal{P}_{L_0, L_1}$, and two  sections $\xi, \eta$ of $x^* T \widehat{M}$ set
$$
\langle \xi, \eta \rangle_{L^2} = \int_0^1 \omega(\xi(x(t)), J_t \eta(x(t)) dt.
$$

Recall from Lemma \ref{lemma:diff_action} that:
$$
d \mathcal{A}_{H_t}(\xi) = \int_0^1 \omega(\xi, x' - X_{H_t}(x)) dt,
$$
 hence we get  $\nabla_{J_t} \mathcal{A}_{H_t} = -J_t (x' - X_{H_t}) $. The gradient equation of $\mathcal{A}_{H_t}$ is  
$$
\frac{d}{ds} u(s) = -J_t \left( u(s)' - X_{H_t}(u(s)) \right),
$$
for $u:\R \to \mathcal{P}_{L_0, L_1}$. This ODE is poorly behaved on the infinite-dimensional space $\mathcal{P}_{L_0, L_1}$. For instance, it does not induce a flow on $\mathcal{P}_{L_0, L_1}$, see \cite[Remark 11.3.1]{PR14}. However, one can rewrite it as a PDE on the finite-dimensional manifold $\widehat{M}$:
\begin{equation}\label{eq:floer_eq}
	\partial_s u + J_t (\partial_t u - X_{H_t}) = 0.
\end{equation}
 This equation is called \textit{Floer's equation}. 
 \begin{defn} For $x_-, x_+ \in \crit(\mathcal{A}_{H_t})$ the moduli space $\mathcal{M}(x_-,x_+,H,J)$ is the set of $u:\R \times [0,1] \to \widehat{M}$ which satisfy Floer's equation and have the asymptotic conditions $\lim\limits_{s \to \pm \infty} u(s,t) = x_{\pm}(t)$. 
 \end{defn}
 Note that before we wrote Floer's equation, $u$ was a map to $\mathcal{P}_{L_0, L_1}$, so $u(s,t)$ satisfies the boundary conditions $u(s,i) \in L_i$. Later on we write $\mathcal{M}(x_-,x_+):=\mathcal{M}(x_-,x_+,H,J)$ when $H$ and $J$ are clear from the context.
 
 The differential will be the count of ``unparametrized" elements in $\mathcal{M}(x_-,x_+)$ when $\mu(x_+) - \mu(x_-) = 1$.  ``Unparametrized" means that we divide $\mathcal{M}(x_-,x_+)$ by the action of $\R$, which acts by translations. This is possible since Floer's equation is invariant under translation in the $s$ direction. Hence, the goal is to show that $\mathcal{M}(x_-,x_+)$ is a manifold that can be compactified by broken trajectories. Furthermore, all possible breakings appear in this compactification. These statements are respectively explained in sections about transversality, compactness, and gluing. All these statements rely on ground-breaking ideas from \cite{Gr85} and \cite{Fl88}. To apply the ideas from the case of closed manifolds, one needs to show that there exists a compact set $K \subset \widehat{M}$ such that all the elements from $\mathcal{M}(x_-,x_+)$ have images inside $K$. This is known as the \textit{maximum principle}. The original proof appears in \cite[Lemma 1.8]{Vi99} for the case of closed Hamiltonian loops, and here we present it. The same proof in the Lagrangian setting appears, e.g., in \cite[Proposition 2.8]{BCS24} or \cite[\S D.4]{Ri13}.  It covers the Hamiltonians, which are of the form $H_s(x,r) = f_s(r)$ outside of the compact set $\{ r\geq 1\}$. In Lemma \ref{lemma:no_escape_2}, we will show it for more general Hamiltonians following \cite{Fa20}. For further generalizations of the maximum principle that covers Hamiltonians which are contact at infinity see \cite[Theorem 1.1]{MU19} and \cite[\S2.2.5]{BC24}. 
 
 \begin{prop}\label{prop:maximum}
 Let $H_s$ be a Hamiltonian of the form $H_s(x,r) = f_s(r)$ on $\{r\geq 1\}$. If $\partial_s \partial_r f \leq 0$ then there exist a compact set $K$ such that all elements $u \in \mathcal{M}(x_-,x_+)$ have images $\mathrm{Im}(u)$ contained in  $K$.
 \end{prop}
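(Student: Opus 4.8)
The plan is to run the standard convexity-at-infinity argument: on the region where $u$ ventures into $\{r \geq 1\}$, Floer's equation composed with the function $r$ satisfies an elliptic differential inequality, so $r \circ u$ cannot attain an interior maximum exceeding the levels reached on $\partial(\text{that region})$, which are controlled by the (compactly supported in $r$) asymptotic chords and the Lagrangian boundary. Concretely, I would fix $u \in \mathcal{M}(x_-, x_+)$ and let $\rho := r \circ u$, defined on the open set $\Sigma := u^{-1}(\{r > 1\}) \subset \R \times [0,1]$. The asymptotics $\lim_{s\to\pm\infty} u(s,t) = x_\pm(t)$ together with admissibility of $H$ force $x_\pm$ to lie in $\{r < 1\}$ (as already noted after the definition of admissible Hamiltonian: no chords live entirely in $r \geq 1$, and here since $H$ is radial at infinity all chords in $r\geq1$ would be Reeb-type, excluded), so $\Sigma$ is bounded away from $s = \pm\infty$; it therefore suffices to show $\rho$ has no interior local maximum on $\Sigma$, and then a maximum on $\partial \Sigma$ lies either on $\{r = 1\}$ or on the Lagrangian boundary $\{t=0,1\}$ — and on the latter, cylindricality of $L_i$ (the Liouville field is tangent, i.e. $L_i \cap \{r\geq 1\} = \Lambda_i \times [1,\infty)$) means $\rho$ is locally constant along the boundary, so the Hopf lemma or a direct computation of $\partial_t \rho$ there rules it out. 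Hence $\rho \leq 1$ on $\Sigma$, giving $\mathrm{Im}(u) \subset u^{-1}(\{r \leq 1\})$, which together with the (compactly supported in the $r$-direction) part and the fact that on $\{r\geq 1\}$ the Lagrangians are cylinders over a compact Legendrian shows $\mathrm{Im}(u)$ sits in a fixed compact $K$ depending only on $H$, $J$, $L_0$, $L_1$.

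**The computation** at the heart is the differential inequality for $\rho$. Using $J$ of SFT type ($\widehat\lambda \circ J = dr$ for $r \geq 1$) and the form \eqref{eq:contact_at_inf} of $X_{H_s}$ — here with $h_s$ replaced by $f_s'(r)$, so on $\{r\geq 1\}$ one has $X_{H_s} = f_s'(r) R_\alpha$ pointed purely in the contact direction with no $\partial_r$-component — I would compute $\Delta \rho = \partial_s^2 \rho + \partial_t^2 \rho$ by differentiating $\partial_s \rho = dr(\partial_s u)$ and $\partial_t \rho = dr(\partial_t u)$ and substituting Floer's equation $\partial_s u = -J(\partial_t u - X_{H_s})$. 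The key identities are $dr(\partial_s u) = -\widehat\lambda(\partial_t u - X_{H_s}) = -\widehat\lambda(\partial_t u)$ (since $\widehat\lambda(X_{H_s}) = \widehat\lambda(f_s' R_\alpha) = f_s' \cdot 0$... actually $\alpha(R_\alpha)=1$, so $\widehat\lambda(X_{H_s}) = r f_s'$; I'd track this term carefully) and $dr(\partial_t u) = \widehat\lambda(\partial_s u + $ correction$)$, and the standard manipulation yields $\Delta \rho = |\partial_s u|^2_{J} \cdot(\text{positive}) - \partial_s(\text{something involving }f_s') + \ldots$, and the sign hypothesis $\partial_s\partial_r f \leq 0$ is exactly what's needed to absorb the $\partial_s f_s'$ term with the correct sign so that $\Delta \rho \geq c |\nabla \rho|^2$-type inequality holds (a subsolution of a linear elliptic operator), which by the strong maximum principle forbids interior maxima. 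This is the place where the hypothesis enters and where the bookkeeping of signs is delicate.

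**The main obstacle** I anticipate is precisely this sign-chasing: getting the differential inequality into the standard form $\Delta \rho + \langle a, \nabla\rho\rangle + b\rho \geq 0$ (or $\geq$ something manifestly $\geq 0$) requires carefully handling the $s$-dependence of $H$ — a time-independent convexity argument is classical, but the $\partial_s$-term produced by $f_s$ depending on $s$ is exactly why one needs $\partial_s\partial_r f \leq 0$ rather than just convexity, and one must verify no stray terms of indefinite sign survive. The boundary analysis on the Lagrangian edges $\{t = 0, 1\}$ is the second delicate point: since $L_i$ is only cylindrical (invariant under the Liouville flow for $r \geq 1$), the vector $\partial_t u$ there is tangent to $L_i$, hence $dr(\partial_t u) = \widehat\lambda$ evaluated on something in $TL_i$ which vanishes — so $\partial_t \rho = 0$ along the boundary — and then I would invoke the boundary version of the maximum principle (Hopf lemma for the outward normal $\partial_t$) to exclude boundary maxima, using that $\widehat\lambda|_{L_i} = 0$ makes the relevant normal derivative computation collapse. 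Once both the interior and boundary strong maximum principles are in place, the conclusion $\mathrm{Im}(u) \subset K$ is immediate from compactness of $\{r \leq 1\}$ intersected with the relevant cylindrical region, uniformly in $u$.
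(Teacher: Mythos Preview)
Your approach is exactly the paper's: set $\rho = r\circ u$, derive an elliptic inequality $L\rho \geq 0$ (the hypothesis $\partial_s\partial_r f \leq 0$ is precisely what kills the extra term $-\rho\,\partial_s\partial_r f_s(\rho)$ appearing in $\Delta\rho$), apply the strong maximum principle in the interior and Hopf's lemma on the Lagrangian edges. One slip to fix in your boundary argument: on $\{t=i\}$ it is $\partial_s u$, not $\partial_t u$, that is tangent to $L_i$ (since $u(s,i)\in L_i$ for all $s$); the correct route to $\partial_t\rho=0$ is the identity $\partial_t\rho = dr(\partial_t u) = \widehat\lambda(J\partial_t u) = \widehat\lambda(-\partial_s u + J X_{H_s}) = -\widehat\lambda(\partial_s u)$ (using $dr(X_{H_s})=0$), which then vanishes on the boundary because $\partial_s u\in TL_i$ and $\widehat\lambda|_{L_i}=0$.
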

 \begin{proof}
 We will show that $\rho(s,t) := r \circ u (s,t)$ satisfies the strong maximum principle, and hence, it must obtain maximum at the asymptotics $x_{\pm}$. 
 $$
 \begin{aligned}
 \partial_t \rho &= d r ( \partial_t u) = \lambda (J_t \partial_t u) = \lambda(-\partial_s u + J_t X_{H_t} ) = -\lambda(\partial_s u) \\
 \partial_s \rho &= d r (\partial_s u ) = \lambda (J_t \partial_s u) = \lambda( \partial_t u - X_{H_t}) =   \lambda( \partial_t u) - \rho \partial_r f_s(\rho).
 \end{aligned}
 $$
 We have used that $u$ satisfies Floer's equation $\partial_s u + J_t (\partial_t u - X_{H_t})=0$, that $\lambda \circ J_t = dr$ and that $X_H = \partial_r f_s(r) R_{\alpha}$. Further, we have
 
 $$
 \begin{aligned}
 |\partial_s u|^2_J:=\omega (\partial_s u, J_t \partial_s u) &= \omega(\partial_s u, \partial_t u - X_{H_t})\\
 &= \omega(\partial_s u, \partial_t u) - dH_t(\partial_s u)\\
 &= \omega(\partial_s u, \partial_t u) - \partial_r f_s(r) \partial_s \rho\\
 &= \partial_s \lambda(\partial_t u) - \partial_t \lambda(\partial_s u) - \partial_r f_s(r) \partial_s \rho,
 \end{aligned}
 $$
 and
 $$
 \begin{aligned}
 \Delta \rho &= \partial_s^2 \rho + \partial_t^2 \rho\\
 &= \partial_s \lambda(\partial_t u) - \partial_s \rho\cdot \partial_r f_s(\rho)  - \rho \cdot \partial_s \partial_r f_s(\rho) - \rho \cdot \partial_r^2 f_s(\rho) \cdot \partial_s \rho - \partial_t \lambda( \partial_s u)\\
&=\partial_s \lambda(\partial_t u) - \partial_t \lambda( \partial_s u) - \partial_s \rho \cdot \partial_r f_s(\rho)  - \rho \cdot \partial_s \partial_r f_s(\rho) - \rho \cdot \partial_r^2 f_s(\rho) \cdot \partial_s \rho\\
&= |\partial_s u|^2_J  -\rho \cdot \partial_s \partial_r f_s(\rho) - \rho \cdot \partial_r^2 f_s(\rho)  \cdot \partial_s \rho.
 \end{aligned}
 $$
 Set an elliptic operator:
 $$
 L v :=  \Delta v +\rho \cdot \partial_r^2 f_s(v)  \cdot \partial_s v,
 $$
 it follows from the estimates above that $$L\rho \geq |\partial_s u|^2_J  -\rho \cdot \partial_s \partial_r f_s(\rho),$$
 and from the assumption $\partial_s \partial_r f_s \leq 0$ we get $L \rho \geq 0$. Hence, the solutions satisfy the strong maximum principle and can not attain the maximum in the interior. There is a possibility that $\rho$ can attain maximum at the boundary, but then it follows from Hopf's Lemma  (see \cite[\S 6.4.2]{Ev10}) that $\partial_t \rho (s_0, 1) >0$ if the maximum is at $(s_0, 1)$, or $\partial_t \rho (s_0, 0) <0 $ if the maximum is at $(s_0, 0)$. This is impossible because of Lagrangian boundary conditions on $u$, and since $\lambda\vert_{L_i} = 0$. Indeed, one has $\partial_t \rho =-\lambda(\partial_s u)$ and $u(s,i) \in L_i$. 
 \end{proof}
 
 Now we list the properties of the moduli space $\mathcal{M}(x_-, x_+)$. Denote by $\Bar{\mathcal{M}}(x_-, x_+)$ the quotient of  $\mathcal{M}(x_-, x_+)$ by $\R$ action.
 
 \begin{thm}\label{thm:moduli_space}
 There exist a set of almost complex structures $\mathcal{J}_{reg} \subset \mathcal{J}$, of the second Baire category such that for $J_t \in \mathcal{J}_{reg}$, $\mathcal{M}(x_-, x_+, H_t, J_t)$ is a smooth manifold of dimension $\mu(x_+) - \mu(x_-)$. The smooth manifold $\Bar{\mathcal{M}}(x_-, x_+)$ is pre-compact. If $\mu(x_+) - \mu(x_-)=1$ it is a finite set of points, and if $\mu(x_+) - \mu(x_-)=2$:
 $$
 \partial \Bar{\mathcal{M}}(x_-, x_+) \cong \bigcup_{\substack{y, \\ \mu(y) - \mu(x_-) = 1}} \Bar{\mathcal{M}}(x_-, y) \times \Bar{\mathcal{M}}(y, x_+).
 $$
 \end{thm}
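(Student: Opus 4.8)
The plan is to realize $\mathcal{M}(x_-,x_+)$ as the zero set of a Fredholm section of a Banach bundle, establish transversality by perturbing $J$, derive pre-compactness from an energy identity together with the maximum principle and exactness, and then read off the index-$1$ and index-$2$ conclusions from the compactness statement supplemented by a gluing theorem.

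\textbf{Fredholm setup.} Fix $p>2$ and let $\mathcal{B}(x_-,x_+)$ be the Banach manifold of maps $u\colon \R\times[0,1]\to\widehat{M}$ of class $W^{1,p}_{loc}$ with $u(s,0)\in L_0$, $u(s,1)\in L_1$, converging exponentially to $x_\pm$ as $s\to\pm\infty$ (i.e.\ a fixed reference map plus a section of $x_\pm^*T\widehat{M}$ in a weighted Sobolev space near the ends). Over it sits the Banach bundle $\mathcal{E}\to\mathcal{B}(x_-,x_+)$ with fiber $\mathcal{E}_u=L^p(u^*T\widehat{M})$, and Floer's equation \eqref{eq:floer_eq} defines a smooth section $\bar\partial_{H,J}(u)=\partial_s u+J_t(\partial_t u-X_{H_t})$ with $\mathcal{M}(x_-,x_+)=\bar\partial_{H,J}^{-1}(0)$. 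At a zero $u$ the vertical differential $D_u$ is a Cauchy--Riemann type operator whose asymptotic operators at $s=\pm\infty$ are the self-adjoint operators determined by the linearization of $\varphi^t_{H}$ along $x_\pm$; admissibility of $H$ (so $\varphi^1_H(L_0)\pitchfork L_1$) makes these asymptotic operators invertible, hence $D_u$ is Fredholm. A spectral-flow computation, using precisely the trivialization $\Phi$ and the canonical short path of \S\ref{sec:maslov}, identifies $\operatorname{ind} D_u=\mu(x_+)-\mu(x_-)$.

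\textbf{Transversality.} Replace $\mathcal{J}$ by the Banach manifold of $\omega$-compatible, SFT-type almost complex structures of Floer class $C^\varepsilon$ that agree with a fixed $J$ near infinity, form the universal moduli space $\mathcal{M}^{univ}=\{(u,J):\bar\partial_{H,J}(u)=0\}$, and show the universal linearization is onto. Since $x_-\neq x_+$ every $u\in\mathcal{M}(x_-,x_+)$ is non-constant; unique continuation for $D_u$ together with openness of the set of interior points where $u$ is injective and $\partial_s u\neq 0$ produces a ``simple'' point at which perturbing $J$ fills the cokernel of $D_u$. Sard--Smale applied to $\mathcal{M}^{univ}\to\mathcal{J}$ then yields a comeager (second Baire category) set $\mathcal{J}_{reg}$ for which $\mathcal{M}(x_-,x_+,H,J)$ is a smooth manifold of dimension $\operatorname{ind} D_u=\mu(x_+)-\mu(x_-)$. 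The $\R$-action by $s$-translation is free on non-constant solutions, so $\Bar{\mathcal{M}}(x_-,x_+)$ is a smooth manifold of dimension $\mu(x_+)-\mu(x_-)-1$.

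\textbf{Compactness and conclusion.} Integrating $\tfrac{d}{ds}\mathcal{A}_H(u(s,\cdot))=-\|\partial_s u(s,\cdot)\|_{L^2}^2$ gives the energy identity
$$
E(u)=\int_{\R\times[0,1]}|\partial_s u|^2_J\,ds\,dt=\mathcal{A}_H(x_-)-\mathcal{A}_H(x_+)<\infty .
$$
Proposition~\ref{prop:maximum} (and Lemma~\ref{lemma:no_escape_2} for the more general Hamiltonians used in Section~\ref{sec:inv_handle}) confines $\mathrm{Im}(u)$ to a compact set $K$ independent of $u$, while exactness of $(\widehat{M},\widehat\lambda)$ and $\widehat\lambda\vert_{L_i}=0$ force any nonconstant holomorphic sphere or disk to have zero area, excluding bubbling. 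Gromov--Floer compactness then shows every sequence in $\Bar{\mathcal{M}}(x_-,x_+)$ subconverges to a broken trajectory through intermediate chords $x_-=y_0,y_1,\dots,y_m=x_+$, each piece nonconstant modulo translation, so $\mu(y_j)-\mu(y_{j-1})\ge 1$ and $\sum_j(\mu(y_j)-\mu(y_{j-1}))=\mu(x_+)-\mu(x_-)$. For $\mu(x_+)-\mu(x_-)=1$ this forces $m=1$: there is no breaking, so $\Bar{\mathcal{M}}(x_-,x_+)$ is a compact $0$-manifold, hence finite. For $\mu(x_+)-\mu(x_-)=2$ one gets $m\le 2$, and the only limits outside $\Bar{\mathcal{M}}(x_-,x_+)$ itself are once-broken trajectories in $\bigcup_{y,\,\mu(y)-\mu(x_-)=1}\Bar{\mathcal{M}}(x_-,y)\times\Bar{\mathcal{M}}(y,x_+)$. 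The remaining ingredient is gluing: for each such $y$ and each $(u_1,u_2)$ one builds, via a pre-gluing with parameter $R\gg0$ corrected by a Newton iteration based on a right inverse of the pre-glued operator whose norm is bounded uniformly in $R$, a family $u_1\#_R u_2\in\Bar{\mathcal{M}}(x_-,x_+)$ converging to the broken configuration, and shows the resulting gluing map is a diffeomorphism onto a neighborhood of the boundary stratum that exhausts the corresponding ends. The main obstacle is exactly this uniform right-inverse estimate (and the parallel somewhere-injectivity argument in the transversality step); granting it, $\Bar{\mathcal{M}}(x_-,x_+)$ is a compact $1$-manifold with boundary the asserted union of products.
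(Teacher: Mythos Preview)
Your proposal follows essentially the same route as the paper (Fredholm setup via the linearized operator and elliptic estimates, transversality via the universal moduli space and Sard--Smale, Gromov--Floer compactness with bubbling excluded by exactness, and gluing via a uniform right-inverse estimate for the pre-glued operator). One minor discrepancy: in the paper's convention Floer's equation is the \emph{positive} gradient flow of $\mathcal{A}_H$, so $\tfrac{d}{ds}\mathcal{A}_H(u(s,\cdot))=+\|\partial_s u(s,\cdot)\|_{L^2}^2$ and $E(u)=\mathcal{A}_H(x_+)-\mathcal{A}_H(x_-)$ (Lemma~\ref{lemma:energy-difference}), opposite to the sign you wrote.
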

 
We will elaborate on this theorem in Sections \ref{sec:trans}, \ref{sec:comp}, and \ref{sec:gluing}. For more details see \cite{MS12, AD14}.
 
 Now, define $d:CF_k(L_0, L_1;H_t) \to CF_{k-1}(L_0, L_1;H_t)$ on generators $x_+$ of index $\mu(x_+)=k$ by:
 \begin{equation}\label{eq:diff}
 d x_+ = \sum_{\mu(x_-) = k-1} \#_2 \Bar{\mathcal{M}}(x_-, x_+) x_-,
 \end{equation}
 and extend to $CF_k(L_0, L_1;H_t)$ by linearity. Since $\mu(x_+) - \mu(x_-) = 1$, it follows from Theorem \ref{thm:moduli_space} that $\Bar{\mathcal{M}}(x_-, x_+)$ is finite set of points, hence $d$ is well defined. 
 
 \begin{lemma}\label{lemma:differential}
 The map $d$ is a differential.
 \end{lemma}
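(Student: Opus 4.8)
The plan is to prove $d\circ d=0$ over $\Z_2$ by the classical argument that a compact $1$-manifold has an even number of boundary points, with all the analytic content imported from Theorem \ref{thm:moduli_space}. First I would fix a generator $x_+$ with $\mu(x_+)=k$ and expand $d^2$ using the definition \eqref{eq:diff} twice:
$$
d^2 x_+ = \sum_{\mu(x_-) = k-2} \Big( \sum_{\mu(y)=k-1} \#_2 \Bar{\mathcal{M}}(x_-,y)\cdot \#_2\Bar{\mathcal{M}}(y,x_+)\Big)\, x_- .
$$
Because $\crit(\mathcal{A}_H)$ is finite, all sums occurring here are finite, so $d^2$ is well defined; it then suffices to show that for every pair $x_-,x_+$ with $\mu(x_+)-\mu(x_-)=2$ the total number
$$
N(x_-,x_+) := \sum_{\mu(y)=\mu(x_-)+1}\#\big(\Bar{\mathcal{M}}(x_-,y)\times\Bar{\mathcal{M}}(y,x_+)\big)
$$
of once-broken trajectories from $x_+$ to $x_-$ is even. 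Note that by the index-$1$ part of Theorem \ref{thm:moduli_space} each factor $\Bar{\mathcal{M}}(x_-,y)$, $\Bar{\mathcal{M}}(y,x_+)$ is a finite set, so $N(x_-,x_+)$ is indeed a finite number.

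Next I would invoke the index-$2$ part of Theorem \ref{thm:moduli_space}: for $J_t\in\mathcal{J}_{reg}$ the space $\Bar{\mathcal{M}}(x_-,x_+)$ is a pre-compact smooth $1$-manifold whose compactification adds precisely the broken configurations, giving
$$
\partial\Bar{\mathcal{M}}(x_-,x_+)\cong\bigcup_{\mu(y)=\mu(x_-)+1}\Bar{\mathcal{M}}(x_-,y)\times\Bar{\mathcal{M}}(y,x_+).
$$
The right-hand side is a finite set of cardinality exactly $N(x_-,x_+)$, while the left-hand side is the boundary of a compact $1$-dimensional manifold with boundary; by the classification of compact $1$-manifolds it therefore has an even number of points. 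Hence $N(x_-,x_+)$ is even, so the coefficient of each $x_-$ in $d^2x_+$ vanishes in $\Z_2$, which is the claim.

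The genuine input is entirely concentrated in Theorem \ref{thm:moduli_space}, i.e.\ transversality (moduli spaces are manifolds of the expected dimension), the maximum principle together with Gromov-type compactness (Proposition \ref{prop:maximum}, so that no mass escapes to infinity and no disk or sphere bubbles off), and gluing (so that every broken trajectory genuinely bounds a $1$-parameter family and the compactification is a manifold \emph{with boundary}, not something more degenerate). Granting those, the lemma is the formal bookkeeping above, and the only point requiring a word of care is that the two ways a strip in $\Bar{\mathcal{M}}(x_-,x_+)$ can degenerate at its two punctures are both subsumed in the single union over intermediate chords $y$, so no broken configuration is omitted or double-counted; since this is exactly the form in which Theorem \ref{thm:moduli_space} states the boundary, nothing further needs to be verified.
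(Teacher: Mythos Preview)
Your proof is correct and follows essentially the same argument as the paper: expand $d^2$ on a generator, identify the coefficient of each $x_-$ with the number of once-broken trajectories, and conclude using Theorem \ref{thm:moduli_space} that this equals $\#_2\partial\Bar{\mathcal{M}}(x_-,x_+)=0$ since a compact $1$-manifold has an even number of boundary points. The only difference is cosmetic---you add some commentary on where the analytic input is used, which the paper omits.
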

 \begin{proof}
 Take $z\in CF_k(L_0, L_1;H_t)$:
 $$
 \begin{aligned}
 d^2 z &= d\left( \sum_{y } \#_2 \Bar{\mathcal{M}}(y, z) y \right)\\
  &= \sum_{y}\Bar{\mathcal{M}}(y, z) \left( \sum_{x} \#_2 \Bar{\mathcal{M}}(x, y) \#_2  x\right)\\
 &=\sum_x \sum_y  \#_2\Bar{\mathcal{M}}(x, y) \cdot \#_2\Bar{\mathcal{M}}(y, z) x \\
 &= \sum_x  \sum_y \#_2 (\Bar{\mathcal{M}}(x, y)  \times  \Bar{\mathcal{M}}(y, z)) x\\
 &= \sum_x \#_2 \partial \Bar{\mathcal{M}}(x,z) x = 0.
 \end{aligned}
 $$
 Here we have used the structure of the boundary of $\Bar{\mathcal{M}}(x,z)$ from  Theorem \ref{thm:moduli_space} in the case $\mu(z) - \mu(x) = 2$. We have also used that the cardinality of the boundary of a compact one-dimensional manifold is even.
 \end{proof}
 
\subsubsection{Transversality and the Index theorem}\label{sec:trans}
In this section, we briefly cover the setup for showing that the moduli space $\mathcal{M}(x_-,x_+)$ has the structure of a smooth manifold. This is a consequence of the infinite-dimensional implicit function theorem for Fredholm maps, together with the Sard-Smale theorem. However, this does not hold unconditionally; $\mathcal{M}(x_-,x_+)$ is a smooth manifold for the almost complex structures $J_t$ that are \textit{regular}. We denote the set of regular almost complex structures $\mathcal{J}_{reg}$. The goal is to explain that this set is ``large", i.e., it is a countable intersection of open and dense sets\footnote{This is the definition of a set of the second Baire category.}.

The strategy is as follows: we want to consider the solutions of Floer's equation as a zero set of a section $\mathcal{F}_{H,J}$ of an appropriate Banach bundle, then we want to show that the vertical derivative $D_u$ of such a map is Fredholm. An operator between Banach spaces is Fredholm if it has a closed image and finite-dimensional kernel and cokernel. A priori, $\mathcal{F}_{H,J}$ does not need to be transverse to the zero section (or equivalently $D_u$ does not need to be surjective), so we can not appeal to the infinite-dimensional implicit function theorem that the zero set is a smooth manifold. In the case $D_u$ is surjective, the tangent space of the moduli space at $u$ is given by $\ker D_u$. Put differently, the expected dimension of the connected component of $\mathcal{M}(x_-,x_+)$ that contains $u$ is given by the \textit{Fredholm index} $\mathrm{ind} D_u := \dim \ker D_u - \dim \mathrm{coker} D_u$. Each $u \in \mathcal{M}(x_-,x_+)$ canonically assigns the \textit{relative} Maslov index $\mu_u(x_-, x_+)$, that in the presence of globally defined grading satisfies
$$
\mu_u(x_-, x_+) = \mu(x_+) - \mu(x_-).
$$
To give some context, the domain of $\mathcal{F}_{H,J}$ is the $W^{1,p}$-completion of smooth maps $u:\R \times [0,1] \to W$, which satisfy Lagrangian boundary condition, and that converge sub-exponentially, uniformly in $t$, to $x_{\pm}$ when $s\to \pm \infty$. Denote the space of smooth maps with $C^{\infty}(x_-, x_+)$, and its $W^{1,p}$-completion with $\mathcal{W}^{1,p}(x_-, x_+)$. It is important to choose $p>2$ so that each $u \in \mathcal{W}^{1,p}(x_-, x_+)$ is continuous. Now consider the tangent space of the Fr\'echet manifold $C^{\infty}(x_-, x_+)$, whose fiber at $u \in C^{\infty}(x_-, x_+)$ is identified with the sections $\xi$ of $u^* TW$ that are tangent to the Lagrangians $L_i$ on the ends, and exponentially converge to $0$ in the ends. Denote by $\mathcal{L}^p(x_-, x_+)$ the fiber-wise $L^p$-completion of the tangent space to $C^{\infty}(x_-, x_+)$. \textit{Floer's map} is given by

\begin{equation}\label{eq:Floer_section}
	\begin{aligned}
		\mathcal{F}_{H,J}: \mathcal{W}^{1,p}(x_-, x_+) &\to \mathcal{L}^p(x_-, x_+) \\
		u &\mapsto \frac{\partial u}{\partial s} + J_t \left(\frac{\partial u}{\partial t} - X_{H_t}(u)\right).
	\end{aligned}
\end{equation}

After using the symplectic trivialisation of $u^*TW \cong \R \times [0,1] \times \R^{2n}$ (such that $T_{u(s,i)}L_{i} \cong \R^n \times \{0\}$), the vertical derivative of $\mathcal{F}_{H,J}$ at $u \in \mathcal{F}_{H,J}^{-1}(0)$ is identified with
$$
D_u \xi = \partial_s \xi + J_0\partial_t \xi + S(s,t),
$$
where $\xi \in W^{1,p}(\R \times [0,1], \R\times \{0,1\}; \R^{2n}, \R^n \times \{0\})$, and $S(s,t)$ converges uniformly to a path of symmetric matrices $S_{\pm}(t)$ when $s \to \pm \infty$.  The map $S_{\pm}(t)$ is determined by $\frac{d}{dt} A(t)=J_0 S_{\pm}(t)\circ A(t)$, where $A(t)$ is identified with $D \varphi_{H_t}^t(x_{\pm}(t))$ using the trivialization. For details on the form of $D_u$ after trivialization see \cite[\S2.2]{Sa97}, or for a different approach using the Whitney embedding see \cite[\S8.4]{AD14}.

The following lemma from functional analysis will be helpful in proving the Fredholm property of $D_u$.

\begin{lemma}\label{lemma:fredholm operator}
Let $D:X \to Y$ be a bounded operator and $K: X \to Z$ a compact operator where $X, Y$ and $Z$ are Banach spaces, if there is $c>0$ such that for every $x \in X$ we have
$$
\|x\|_X \leq c (\|Dx\|_Y + \|K x\|_Z),
$$
then $D$ has a finite-dimensional kernel and closed image.
\end{lemma}
\begin{proof}
See \cite[Lemma A.1.1.]{MS12}.
\end{proof}
The Fredholm property for $D_u$ will follow after showing that both $D_u$ and its formal adjoint $D^*_u$ satisfy: 
\begin{equation} \label{eq:l_p-estimate}
\| \xi\|_{W^{1,p}} \leq c (\|D \xi\|_{L^p} + \|\xi\|_{L^p(-T,T)}),
\end{equation}
since the inclusion $L^p ([-T,T] \times [0,1]) \to L^p(\R \times [0,1])$ is compact. We will not provide all the details, however we will give proof of some intermediate steps.

\begin{lemma}\label{lemma:l_p-estimate}
There is a constant $c>0$ such that
$$
\| \xi\|_{W^{1,p}} \leq c (\|D \xi\|_{L^p} + \|\xi\|_{L^p}).
$$
\end{lemma}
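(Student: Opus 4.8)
The plan is to establish the elliptic estimate for the constant-coefficient Cauchy–Riemann operator first and then absorb the lower-order term $S(s,t)$. Write $D_u\xi = \overline{\partial}\xi + S(s,t)\xi$, where $\overline{\partial}\xi := \partial_s\xi + J_0\partial_t\xi$ is the standard Cauchy–Riemann operator on the strip $\R\times[0,1]$ with the totally real boundary condition $\xi(s,i)\in\R^n\times\{0\}$. For this operator one has the classical Calder\'on–Zygmund-type interior-plus-boundary estimate
\begin{equation}\label{eq:cr-estimate}
\|\xi\|_{W^{1,p}(\R\times[0,1])} \leq c_0\left(\|\overline{\partial}\xi\|_{L^p(\R\times[0,1])} + \|\xi\|_{L^p(\R\times[0,1])}\right),
\end{equation}
valid for $1<p<\infty$. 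First I would recall why \eqref{eq:cr-estimate} holds: after reflecting across the boundary line $\{t\in\{0,1\}\}$ using the linear boundary condition (the Lagrangian $\R^n\times\{0\}$ is preserved by the reflection $J_0$-conjugation), one reduces to the $L^p$-boundedness of Riesz transforms on $\R^2$ (equivalently on the cylinder $\R\times(\R/2\Z)$), which is standard singular-integral theory; see \cite[Appendix D]{MS12} or \cite[\S C.2]{AD14} for the version adapted to Floer strips.

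Second, I would bring in the zeroth-order term. Since $S(s,t)$ is a bounded, continuous family of real $2n\times 2n$ matrices (it converges uniformly as $s\to\pm\infty$ to the limiting symmetric-type matrices $S_\pm(t)$ determined by the linearized Hamiltonian flow, hence $\|S\|_{L^\infty}=:C_S<\infty$), multiplication by $S$ is a bounded operator on $L^p$ with norm at most $C_S$. Therefore
\begin{equation}\label{eq:absorb}
\|\overline{\partial}\xi\|_{L^p} \leq \|D_u\xi\|_{L^p} + \|S\xi\|_{L^p} \leq \|D_u\xi\|_{L^p} + C_S\|\xi\|_{L^p},
\end{equation}
and substituting \eqref{eq:absorb} into \eqref{eq:cr-estimate} gives
$$
\|\xi\|_{W^{1,p}} \leq c_0\left(\|D_u\xi\|_{L^p} + (1+C_S)\|\xi\|_{L^p}\right),
$$
which is the asserted inequality with $c = c_0(1+C_S)$. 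The same reasoning applies verbatim with $D_u$ replaced by its formal adjoint $D_u^*$, since the adjoint has the same principal symbol and a zeroth-order term of the same bounded type.

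The main obstacle is \eqref{eq:cr-estimate} itself: the $L^p$ elliptic estimate on a manifold with boundary is genuinely harder than its interior counterpart, because one must check that the totally real (here, Lagrangian) boundary condition is elliptic in the Lopatinski–Shapiro sense so that the reflection trick produces a globally defined constant-coefficient operator on the doubled strip. I would state this carefully but cite \cite{MS12,AD14} rather than reproving singular-integral bounds; the only genuinely Floer-theoretic input is verifying that $\R^n\times\{0\}$ interacts well with $J_0$ under reflection, which is immediate from $J_0(\R^n\times\{0\})=\{0\}\times\R^n$. A secondary subtlety, which does not affect this particular lemma but matters for the next step toward \eqref{eq:l_p-estimate}, is that the constant $c$ here is uniform in $s$ only because $S$ is globally bounded; the improvement to an estimate with $\|\xi\|_{L^p(-T,T)}$ on the right will additionally use the injectivity of the asymptotic operators $\partial_s + J_0\partial_t + S_\pm(t)$ coming from nondegeneracy of the chords $x_\pm$.
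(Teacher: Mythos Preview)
Your proposal is correct and follows the same overall strategy as the paper: establish the $W^{1,p}$ estimate for the principal part $\overline{\partial}=\partial_s+J_0\partial_t$ via Calder\'on--Zygmund theory, then absorb the bounded zeroth-order term $S$ using $\|S\|_{L^\infty}<\infty$. The implementations differ in one detail worth noting. The paper does not invoke reflection or Lopatinski--Shapiro; instead it writes an arbitrary $\xi$ as $\xi=(\partial_s-J_0\partial_t)u$ for some potential $u$ (implicitly using that the conjugate Cauchy--Riemann operator is bijective), so that $\overline{\partial}\xi=\Delta u$, and then applies the scalar Calder\'on--Zygmund inequality $\sum\|\partial_i\partial_j u\|_{L^p}\leq c_1\|\Delta u\|_{L^p}$ directly to control the first derivatives of $\xi$. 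This produces the same final constant, $c=2c_1(1+\|S\|_{C^0})$, as your argument. Your reflection route is the more standard textbook approach and has the advantage of explicitly addressing the totally real boundary condition, which the paper's version glosses over; the paper's potential trick is computationally slick but leaves both the boundary issue and the claimed bijectivity of $\partial_s-J_0\partial_t$ as black boxes.
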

\begin{proof}
Recall the Calderon-Zygmund inequality (see \cite[Corollary B.2.8.]{MS12}) for maps $u:\Omega \to \R$, where $\Omega \subset \R^m$:
$$
\sum \|\partial_i \partial_j u\|_{L^p} \leq c_1 \| \Delta u \|_{L^p}.
$$
Here $\Delta:= \frac{\partial^2}{\partial x_1^2}+ \cdots+\frac{\partial^2}{\partial x_m^2}$ is the standard Laplacian. 

Now, let's start with $\xi:= \partial_s u - J_0 \partial_t u$ (here, we are implicitly using the fact that the operator $u \mapsto \partial_s u - J_0 \partial_t u$ is bijective). Note that one can decompose $\Delta$ as $(\partial_s + J_0 \partial_t) (\partial_s - J_0 \partial_t)$.
\begin{equation*}
\begin{aligned}
\| \xi\|_{W^{1,p}} &= \| \partial_s \xi\|_{L^p} + \| \partial_t \xi\|_{L^p} + \|\xi\|_{L^p}\\ 
			&\leq 2 ( \| \partial_s \partial_s u \|_{L^p} + \| \partial_t \partial_s u \|_{L^p} + \| \partial_t \partial_t u \|_{L^p} + \|\xi\|_{L^p})\\
			&\leq 2c_1( \|\Delta u\|_{L^p} + \|\xi\|_{L^p}).
\end{aligned}
\end{equation*}
On the other hand, note that $D_u \xi - S\xi = \Delta u$, hence we have
$$
\| \xi\|_{W^{1,p}} \leq 2c_1 ( \|D_u \xi\|_{L^p} + \|S \xi\|_{L^p} + \|\xi\|_{L^p}).
$$
Since the $C^0$ norm of $S$ is bounded we get
$$
\| \xi\|_{W^{1,p}} \leq c ( \|D_u \xi\|_{L^p} + \|\xi\|_{L^p}),
$$
where $c = 2c_1 (1+ \|S\|_{C^0})$.
\end{proof}
The estimate (\ref{eq:l_p-estimate}) will hold if the symplectic matrices $\psi_{\pm}(1)$ which are determined by $d/dt \psi_{\pm} (t) = J_0 S_{\pm}(t) \psi_{\pm}(t)$ satisfy $\psi_{\pm}(1) (\R^n \times \{0\}) \pitchfork \R^n \times \{0\}$. This will follow from Lemma \ref{lemma:l_p-estimate} and
\begin{lemma}
If $S = S(t)$ does not depend on $s$, and if $\psi(1) (\R^n \times \{0\}) \pitchfork \R^n \times \{0\}$ then the operator
$$
D \xi = \partial_s \xi + J_0 \partial_t \xi +S(t) 
$$
is bijective for $1<p<\infty$.
\end{lemma}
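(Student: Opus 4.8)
The plan is to reduce Floer's linearized operator to an ordinary differential operator in the $t$-variable and exploit self-adjointness. Write $D\xi=\partial_s\xi-A\xi$, where $A$ is the operator $\xi\mapsto-J_0\tfrac{d}{dt}\xi-S(t)\xi$ on $L^2([0,1];\R^{2n})$ with domain the $W^{1,2}$-paths $\xi$ satisfying $\xi(0),\xi(1)\in\R^n\times\{0\}$. Integrating by parts, the boundary terms are of the form $\langle J_0\xi(i),\zeta(i)\rangle$ and vanish because the standard symplectic form vanishes on the Lagrangian $\R^n\times\{0\}$; since moreover $S(t)$ is symmetric, $A$ is self-adjoint on this domain. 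Because $W^{1,2}([0,1])\hookrightarrow L^2([0,1])$ is compact, $A$ has compact resolvent, so its spectrum is real, discrete, and without finite accumulation point. The first key point is that $0\in\mathrm{spec}(A)$ precisely when transversality fails: $A\xi=0$ is equivalent to $\dot\xi=J_0S(t)\xi$, i.e.\ $\xi(t)=\psi(t)\xi(0)$, and the boundary conditions then say $\xi(0)\in(\R^n\times\{0\})\cap\psi(1)^{-1}(\R^n\times\{0\})$, a subspace which is trivial exactly when $\psi(1)(\R^n\times\{0\})\pitchfork\R^n\times\{0\}$. So under the hypothesis of the lemma $A$ is invertible, with spectral gap $\delta:=\mathrm{dist}(0,\mathrm{spec}(A))>0$.

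For $p=2$ I would finish by separation of variables. Given $\eta\in L^2(\R\times[0,1])$, expand it in an orthonormal eigenbasis $\{e_\lambda\}$ of $A$; the equation $D\xi=\eta$ decouples into the scalar ODEs $\dot\xi_\lambda-\lambda\xi_\lambda=\eta_\lambda$, each of which has a unique $L^2(\R)$-solution, given by convolution with $e^{\lambda s}\mathbf 1_{\{\mp s>0\}}$, of norm at most $|\lambda|^{-1}\|\eta_\lambda\|_{L^2}\le\delta^{-1}\|\eta_\lambda\|_{L^2}$. Reassembling, bounding $\partial_s\xi$ the same way and $\partial_t\xi$ from Floer's equation, gives $\xi\in W^{1,2}$ with the correct boundary conditions and shows $D$ is an isomorphism onto $L^2$; injectivity is also immediate from the expansion, since $\lambda\neq0$ forces the only bounded solution of $\dot\xi_\lambda=\lambda\xi_\lambda$ to vanish.

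For general $1<p<\infty$ I would argue through Fredholm theory, reusing the same ingredients. Elliptic regularity applied to Lemma~\ref{lemma:l_p-estimate} shows that any $\xi$ in the $W^{1,p}$-completion with $D\xi=0$ is smooth and (by definition of that completion) converges to $0$ as $s\to\pm\infty$; expanding $\xi(s,\cdot)$ in the eigenbasis of $A$, each coefficient solves $\dot\xi_\lambda=\lambda\xi_\lambda$ with $\lambda\neq0$ and is bounded, hence $\xi\equiv0$. Thus $\ker D=0$ for every $p$. Next, the estimate \eqref{eq:l_p-estimate} applies here because the asymptotic operators at the two ends of the strip both equal $A$ and $\psi(1)(\R^n\times\{0\})\pitchfork\R^n\times\{0\}$; combined with Lemma~\ref{lemma:fredholm operator} and the compact embedding $L^p([-T,T]\times[0,1])\hookrightarrow L^p(\R\times[0,1])$ it gives that $D$ has closed range. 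The formal adjoint $D^*$ has the same form, again with asymptotic operator $A$ (here self-adjointness of $A$ is used a second time), so it also has closed range and $D$ is Fredholm with $\mathrm{coker}\,D\cong\ker D^*$; the identical eigenfunction argument gives $\ker D^*=0$, so $D$ is onto. Hence $D$ is bijective.

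The step I expect to be the main obstacle is this passage to $p\neq 2$: one must check that the hypotheses of the $L^p$-elliptic estimate \eqref{eq:l_p-estimate} — transversality at \emph{both} (coinciding) ends — are exactly what the lemma assumes, and that the formal adjoint $D^*$ genuinely fits the same framework so that its kernel can be killed by the same expansion. The one analytic point not to skip is the decay as $s\to\pm\infty$ of elements of the $W^{1,p}$-completion, since it is precisely this decay that forces every eigen-coefficient of a potential kernel element to vanish.
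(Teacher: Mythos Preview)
Your reduction to the self-adjoint $t$-operator $A=-J_0\partial_t-S(t)$ and the $p=2$ argument via eigenfunction expansion are correct, and this is essentially what the reference the paper defers to (Salamon's Lemma~2.4) actually does. The paper handles the invertibility of $A$ by a different device: it observes that $J_0\tfrac{d}{dt}:W^{1,p}([0,1])\to L^p([0,1])$ has $n$-dimensional kernel and is formally self-adjoint, hence has Fredholm index zero, and that multiplication by $S(t)$ is a compact perturbation $W^{1,p}\to L^p$; thus $A$ has index zero, and injectivity (your transversality computation) forces surjectivity. This index argument treats all $p$ uniformly, whereas your spectral setup is native to $L^2$. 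Beyond that step the paper only cites Salamon, so your write-up is in fact the more detailed of the two.

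There is, however, a genuine circularity in your passage to $p\neq 2$. You invoke the estimate~\eqref{eq:l_p-estimate} to obtain closed range for $D$, but in the paper's logical order that estimate is \emph{derived from} the present lemma (see the sentence immediately preceding the lemma): one uses bijectivity of the constant-$S$ operator on the tails $\{|s|\gg 0\}$ to absorb the global $\|\xi\|_{L^p}$ of Lemma~\ref{lemma:l_p-estimate} into $\|D\xi\|_{L^p}$, leaving only the compact-interval contribution. So \eqref{eq:l_p-estimate} is not available as input here. The repair is already implicit in your $p=2$ argument and bypasses Fredholm theory entirely: the inverse of $D=\partial_s-A$ is convolution in $s$ with the operator-valued kernel $K(s)=e^{sA}P_-$ for $s>0$ and $K(s)=-e^{sA}P_+$ for $s<0$, where $P_\pm$ are the spectral projections of $A$. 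Since $\|K(s)\|_{\mathrm{op}}\le e^{-\delta|s|}$ with $\delta$ your spectral gap, Young's inequality gives $\|D^{-1}\eta\|_{L^p}\le\delta^{-1}\|\eta\|_{L^p}$ for every $1\le p\le\infty$, and the $W^{1,p}$ bound follows from the equation itself. Your injectivity argument via eigen-coefficients is fine for all $p$ once elliptic regularity makes kernel elements smooth, and it does not rely on~\eqref{eq:l_p-estimate}.
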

\begin{proof}
We will show that the operator $A: W^{1,p}([0,1]) \to L^p([0,1])$ given by $A= J_0 \partial_t + S$ is bijective. This follows from the transversality condition and the theory of ODE. 

\textit{$A$ is injective.} Let $u:[0,1] \to \R^{2n}$ be a map such that $u(i) \in \R^n \times \{0\}$, and
$$
\partial_t u = J_0 S(t) u(t).
$$
Then $u(t) = \exp( \int_0^t J_0 S(\tau) \mathrm{d}\tau) u_0$ for some $u_0 \in \R^n \times \{0\}$. Now since $u(1) = \psi u_0$, and $\psi (\R^n \times \{0\}) \pitchfork \R^n \times \{0\}$, we get $u_0 = 0$.

\textit{$A$ is surjective.} Instead of doing a variation of the constant for the homogeneous equation we will give a slightly different argument in the spirit of the Fredholm theory. The operator $$J_0\frac{d}{dt}: W^{1,p} \to L^p$$ has $n$-dimensional kernel given by the constant maps, and its formal adjoint is equal to $J_0\frac{d}{dt}$, hence its Fredholm index is $0$. The operator given by the multiplication with $S$ is a compact operator $S\cdot : W^{1,p} \to L^p$, since the inclusion $W^{1,p} \to L^p$ is compact. Now, if $F$ is a Fredholm operator, and $K$ compact, then $F + K$ is Fredholm and $\mathrm{ind} F = \mathrm{ind} (F+K)$ (see \cite[\S A.1]{MS12}. Since $A$ is injective, and has index $0$, it must be surjective. For the rest of the proof see \cite[Lemma 2.4]{Sa97}.
\end{proof}
   
   Now we will sketch the proof of the index theorem:
   
  \begin{thm}
  The Fredholm index of $D_u$ is equal to $\mu(x_+) - \mu(x_-)$.
  \end{thm}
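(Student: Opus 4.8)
The plan is to reduce the computation of $\mathrm{ind}\, D_u$ to the Maslov index appearing in the definition of $\mu(x)$, via the standard dictionary between the spectral flow of the path $t \mapsto D\varphi^t_{H_t}$ and the Robbin–Salamon index. Concretely, after the symplectic trivialization of $u^*T\widehat M$ used above, $D_u$ becomes the Cauchy–Riemann type operator $\xi \mapsto \partial_s \xi + J_0 \partial_t \xi + S(s,t)$ on the strip $\R \times [0,1]$ with boundary values in $\R^n \times \{0\}$, where $S(s,t) \to S_\pm(t)$ as $s \to \pm\infty$ and $S_\pm$ is the symmetric-matrix path generating $A_\pm(t) = D\varphi^t_{H_t}(x_\pm(t))$ through $\tfrac{d}{dt}A_\pm = J_0 S_\pm A_\pm$. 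The first step is to invoke the Riemann–Roch / spectral-flow theorem for such operators (as in \cite{RS93} or \cite[\S2]{Sa97}): for a strip operator with Lagrangian boundary conditions whose ends are nondegenerate (the transversality $\psi_\pm(1)(\R^n\times\{0\}) \pitchfork \R^n\times\{0\}$ established in the preceding lemmas), the Fredholm index equals the difference of Robbin–Salamon-type indices of the two boundary paths of Lagrangians obtained from $A_\pm(t)$, possibly up to a normalizing constant $n/2$ coming from the half-integer convention.

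Second, I would match this analytic index with the combinatorial definition of $\mu(x)$. Recall that $\mu(x)$ was defined via the loop $\alpha(s)$ of Lagrangians built from the reference data $\Gamma_{[x]}$, the homotopy $u$, the flowed Lagrangian $d\varphi^s_H\, T_{x(0)}L_0$, and a canonical short path; equivalently, via $\mu(x) = \mu_{RS}(\Lambda_0,\Lambda_1) - n/2$ with $\Lambda_0$ the concatenation running through $\Gamma_{[x]}$, along $L_0$, and then along the flow, and $\Lambda_1$ running along $L_1$. The key point is that these are exactly the boundary data of $D_u$: the $s\to -\infty$ end of the trivialized operator sees the reference path $\gamma_{[x]}$ (with its fixed trivialization $\Gamma_{[x]}$) and the $s\to+\infty$ end sees $x$ together with the flow linearization $A(t) = D\varphi^t_{H_t}(x(t))$. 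Using the Naturality, Concatenation, Homotopy and Localization properties of $\mu_{RS}$ stated in the theorem above, together with the fact that the trivialization $\Phi$ was chosen compatibly with $\Theta$ (so the contributions of the homotopy $u$ and of $\Gamma_{[x]}$ cancel, exactly as in the well-definedness argument for $\mu(x)$), one identifies $\mathrm{ind}\, D_u$ with $\mu_u(x_-,x_+) = \mu(x_+) - \mu(x_-)$.

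The main obstacle, and the part requiring the most care, is the bookkeeping of the half-integer normalizations and orientations: the Robbin–Salamon index of a path of Lagrangian pairs is in general a half-integer, and one must check that the $n/2$ shifts in the definition of $\mu$ at both ends, the contribution of the canonical short path, and the convention in the strip Riemann–Roch formula all combine so that the final answer is the integer $\mu(x_+) - \mu(x_-)$ with no leftover constant. A clean way to organize this is to compute both sides in the model Example \ref{ex:index} (where $L_0 = L_1 = \R^n\times\{0\}$ and $H$ is quadratic, so both $\mathrm{ind}\, D_u$ and $\mu$ are computed by counting half-rotations) to fix the constant, and then argue that the general case differs from a concatenation of such models plus a contractible piece by the homotopy and concatenation properties. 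I would also note that the independence of $\mathrm{ind}\, D_u$ from the choice of $u$ is automatic (the index is locally constant and $\mathcal{M}(x_-,x_+)$-invariant), mirroring the independence of $\mu(x)$ from $u$, so the two quantities are compared componentwise on each connected component of the moduli space.
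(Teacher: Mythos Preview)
Your outline is a correct and standard route to the index theorem, but it is genuinely different from what the paper does. You propose to invoke the general spectral-flow / Riemann--Roch theorem for Cauchy--Riemann operators on the strip and then do the bookkeeping that matches the analytic index with the combinatorial $\mu(x)$. The paper instead gives a direct elementary computation in a model case: it restricts to $n=1$ with $S(s)$ independent of $t$ and equal to $a_\pm \cdot \mathrm{id}$ for $\pm s \geq s_0$, observes that the substitution $w = \exp(\int_0^s S)\, v$ turns elements of $\ker D_u$ into genuinely holomorphic functions on the strip with real boundary conditions, expands in Fourier modes $e^{\ell\pi(s+it)}$, and reads off $\dim \ker D_u = \#\{\ell : a_- < \ell\pi < a_+\}$ (and similarly for $\ker D_u^*$). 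This yields $\mathrm{ind}\, D_u = \lfloor a_+/\pi\rfloor - \lfloor a_-/\pi\rfloor$, which is then matched with the Maslov count exactly via Example~\ref{ex:index}. The general case is claimed to follow from homotopy invariance of both the Fredholm index and the Maslov index, since every integer difference can be realized by such diagonal data.

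What each approach buys: the paper's argument is more self-contained and hands-on (no black-box spectral-flow theorem), at the cost of only treating a model case explicitly and leaving the homotopy reduction to the reader. Your approach is conceptually cleaner and covers the general case uniformly, but in an expository article it effectively outsources the main content to \cite{RS93,Sa97}; in particular, your ``first step'' is close to assuming the result. Your second step, the bookkeeping of the $n/2$ shifts and the canonical short path, is correctly identified as the delicate point, and your suggestion to calibrate the constant on the quadratic model of Example~\ref{ex:index} is exactly what the paper does as well.
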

  \begin{proof}
  Here, we will focus just on the case when $n=1$ and $t$-independent family $S(s)$ which satisfies
  $$S(s)=\begin{bmatrix} a_{\pm} & 0 \\ 0 & a_{\pm}\end{bmatrix},$$
 for $\pm s \geq s_0$. Using the homotopy property of the Fredholm index, and the homotopy property of the Maslov index this is enough since every integer can obtained as a Maslov index of paths determined by such $S$. Set $w(s,t) = \exp(\int_0^s S(\sigma) \mathrm{d} \sigma )v$. One easily verifies that if $v$ is in the kernel of $D_u$, then $w$ is holomorphic with the same boundary conditions as $v$, i.e., $w(s,i) \in \R \times \{0\}$. Hence, we have 
 $$
 w(s,t) = \sum_{\ell \in \Z} c_{\ell} e^{\ell \pi (s + it)}.
 $$ 
 From the boundary conditions we get that the imaginary part $b_{\ell}$ of $c_{\ell} = a_{\ell} + i b_{\ell}$ vanishes. Hence, for $s\geq s_0$ our solution $v$ is of the form:
 $$
 v(s,t)= \sum_{\ell} a'_{\ell} e^{s(\pi \ell - a_+)} e^{i\ell \pi t}.
 $$
 Similarly, for $s \leq - s_0$ we have:
  $$
 v(s,t)= \sum_{\ell} a''_{\ell} e^{s(\pi \ell - a_-)} e^{i\ell \pi t}.
 $$
 So, in order that $v$ belongs to $W^{1,p}$ we must have $a_- < \ell \pi < a_+$. Consequently, the dimension of the kernel is
 $$
 \dim (\ker D_u)= \# \{\ell \mid a_- < \ell \pi < a_+\}.
 $$
 On the other hand, by analogous argument the kernel of $D_u^* = -\partial_s + J_0 \partial_t + S(s)$ satisfies
 $$
\dim (\ker D^*_u)= \# \{\ell \mid a_+ < \ell \pi < a_-\}.
 $$
 Since $\mathrm{coker} D_u \cong \ker D_u^*$ we get that the index of $D_u$ is given by
 $$
 \mathrm{ind} D_u = \left\lfloor \frac{a_+}{\pi} \right\rfloor - \left\lfloor \frac{a_-}{\pi} \right\rfloor.
 $$
 From a similar calculation as in Example \ref{ex:index}, we get that
 $$
 \mathrm{ind} D_u = \mu(S_+) - \mu(S_-).
 $$
  \end{proof}
  
  In order to apply the Implicit function theorem (\cite[Theorem A.3.3.]{MS12}), we need that $D_u$ is surjective. In order to achieve this, we will consider the universal Floer map
  \begin{equation}\label{eq:Floer_section_universal}
	\begin{aligned}
		\mathcal{F}_{H}: \mathcal{W}^{1,p}(x_-, x_+) \times \mathcal{J}_{\epsilon}(J')&\to \mathcal{L}^p(x_-, x_+) \\
		(u,J) &\mapsto \frac{\partial u}{\partial s} + J_t \left(\frac{\partial u}{\partial t} - X_{H_t}(u)\right).
	\end{aligned}
\end{equation}
The space $\mathcal{J}_{\epsilon}(J')$ is a Banach manifold of smooth almost complex structures $J$ that satisfy $\|J-J'\| = \sum \epsilon_k \|J-J'\|_{C^k} < \infty$, for appropriately chosen $\epsilon_k$. Since the tangent space of $\mathcal{J}_{\epsilon}(J')$ is very large, one can achieve that this map is transverse to the zero section. This essentially follows from the fact that the tangent space to the space of linear complex structures on $\R^{2n}$ that are compatible with $\omega_{st}$ is large enough, together with the fact that elements of the moduli space generically have $s$-injective points (see \cite[\S 8.6]{AD14}). To be more precise, the tangent space at $J_0$ is given by $T_{J_0} \mathcal{J}(\R^{2n}) = \{Y \in M_{2n}(\R) \mid Y=Y^T = J_0 Y J_0\}$. This is derived from the equations $J_0^2=-\mathrm{id}$, and $\omega(J_0 x, J_0y)=\omega(x,y)$. Now, for any two non-zero vectors $\xi, \eta \in \R^{2n}$ there is $Y \in T_{J_0} \mathcal{J}(\R^{2n})$ such that $Y\xi = \eta$. This is the content of \cite[Lemma 3.2.2]{MS12}; see \cite[Proposition 3.2.1]{MS12} for the application of this lemma in proving that the universal map is transverse to the zero section. Note that in \cite{MS12} they are working with the Cauchy-Riemann eqution which is not perturbed by $J_t X_{H_t}$, but the argument remains similar. 

Using the fact that the universal Floer map is transverse to the zero section, we will explain how to show that the space of almost complex structures for which the moduli space is a smooth manifold is of the second Baire category. Denote by $\mathcal{M}_{\mathcal{J}} (x_-, x_+)$ the universal moduli space $\mathcal{F}_H^{-1}(0)$. Consider the projection $\pi_2$ to the second coordinate $\mathcal{M}_{\mathcal{J}} (x_-, x_+) \ni (u,J) \mapsto J$. It follows from the Sard-Smale theorem that the space of regular values of $\pi_2$ is of the second Baire category. 

\begin{lemma}
Let $D: X \to Z$ be a Fredholm operator, and let $L:Y \to Z$ be a bounded operator such that $D\oplus L$ is surjective. Then the projection $\Pi: \ker D\oplus L \to Y$ to the second coordinate is Fredholm with $\ker \Pi \cong \ker D$ and $\mathrm{coker} \Pi \cong \mathrm{coker} D$.
\end{lemma}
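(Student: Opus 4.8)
The plan is to prove this as a standard fact about Fredholm operators, independent of the Floer-theoretic context. We are given a Fredholm operator $D: X \to Z$ and a bounded operator $L: Y \to Z$ such that the combined operator $D \oplus L : X \oplus Y \to Z$, $(x,y) \mapsto Dx + Ly$, is surjective, and we want to analyze the projection $\Pi : \ker(D\oplus L) \to Y$. First I would unwind the definitions: an element of $\ker(D\oplus L)$ is a pair $(x,y)$ with $Dx + Ly = 0$, and $\Pi(x,y) = y$. The kernel of $\Pi$ consists of pairs $(x,0)$ with $Dx = 0$, which is exactly $\ker D \times \{0\} \cong \ker D$; this is the easy half and uses nothing beyond the definitions.

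For the cokernel, I would identify the image of $\Pi$. Given $y \in Y$, the pair $(x,y)$ lies in $\ker(D\oplus L)$ for some $x$ precisely when $-Ly \in \mathrm{im}\, D$. Hence $\mathrm{im}\,\Pi = (L)^{-1}(\mathrm{im}\, D) = \{ y \in Y \mid Ly \in \mathrm{im}\, D\}$. The natural map to consider is the composition $Y \xrightarrow{L} Z \xrightarrow{\pi} Z/\mathrm{im}\,D$, where $\pi$ is the quotient projection; since $D$ is Fredholm, $\mathrm{im}\,D$ is closed and $Z/\mathrm{im}\,D$ is finite-dimensional. Then $\mathrm{im}\,\Pi$ is the kernel of $\pi \circ L$, so $Y / \mathrm{im}\,\Pi \cong \mathrm{im}(\pi \circ L)$. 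The surjectivity hypothesis on $D\oplus L$ says precisely that $\mathrm{im}\, D + \mathrm{im}\, L = Z$, i.e. $\pi \circ L$ is surjective onto $Z/\mathrm{im}\,D$. Therefore $\mathrm{coker}\,\Pi = Y/\mathrm{im}\,\Pi \cong Z/\mathrm{im}\,D = \mathrm{coker}\,D$, which in particular is finite-dimensional; and $\mathrm{im}\,\Pi$, being the kernel of a bounded operator onto a finite-dimensional space, is closed. Combined with $\dim\ker\Pi = \dim\ker D < \infty$, this shows $\Pi$ is Fredholm.

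The only genuinely delicate point is to make sure the domain $\ker(D\oplus L)$ is being treated as a Banach space in its own right (a closed subspace of $X \oplus Y$, which it is since $D\oplus L$ is bounded) so that ``Fredholm'' makes sense for $\Pi$ and so that the closedness claims are meaningful; this is routine once stated. I expect the main conceptual obstacle — really more of a bookkeeping obstacle — is keeping the two identifications $\ker\Pi \cong \ker D$ and $\mathrm{coker}\,\Pi \cong \mathrm{coker}\,D$ honest, i.e. exhibiting explicit isomorphisms rather than just matching dimensions, since in the intended application (establishing that regular values of the projection $\pi_2$ from the universal moduli space yield smooth moduli spaces of the expected dimension) one wants the index identity $\mathrm{ind}\,\Pi = \mathrm{ind}\,D$ to come with the correct tangent-space interpretation. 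The isomorphisms above, $(x,0)\mapsto x$ on kernels and the map induced by $\pi\circ L$ on cokernels, do this directly, so no further work is needed.
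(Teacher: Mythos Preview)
Your proof is correct and follows essentially the same route as the paper's: both identify $\ker\Pi$ directly as $\{(x,0)\mid Dx=0\}$, then observe $\mathrm{im}\,\Pi = L^{-1}(\mathrm{im}\,D)$ and pass to $Z/\mathrm{im}\,D$ via the map induced by $L$ (you write it as the composition $\pi\circ L$, the paper as $\bar L:Y\to \mathrm{im}\,L/(\mathrm{im}\,D\cap\mathrm{im}\,L)$ followed by the second isomorphism theorem, but these are the same map). Your version is slightly more complete in that you explicitly verify closedness of $\mathrm{im}\,\Pi$ and that $\ker(D\oplus L)$ is a Banach space, points the paper leaves implicit.
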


This Lemma implies that for every regular value $J$ of $\pi_2$, operator $D_u$ is surjective, and hence, the moduli space $\mathcal{M}(x_-, x_+, H,J)$ is a smooth manifold. We conclude the section with the proof of this Lemma.

\begin{proof}
The kernel of the map $D\oplus L : X \times Z \to Y$ is given by $\ker(D \oplus L) = \{(x,z) \mid D x + L z=0\}$. Hence the kernel of $\Pi$ satisfies $$\ker \Pi = \{(x,0) \mid Dx = 0\} \cong \ker D.$$
On the other hand
$$
\frac{Z}{\mathrm{Im} \Pi} = \frac{Z}{L^{-1}(\mathrm{Im} D)}.
$$
Consider the map $\Bar{L}: Y \to \frac{\mathrm{Im} L }{\mathrm{Im} D \cap \mathrm{Im} L}$, defined by $\Bar{L} y := [L(y)]$.
The kernel of this map is $L^{-1}(\mathrm{Im} D)$, hence 
$$
\frac{Y}{L^{-1}(\mathrm{Im} D)} \cong \frac{\mathrm{Im} L }{\mathrm{Im} D \cap \mathrm{Im} L}.
$$
Since $D \oplus L$ is surjective we have
$$
\frac{Y}{\mathrm{Im} D} = \frac{\mathrm{Im} D + \mathrm{Im} L}{\mathrm{Im} D} \cong \frac{\mathrm{Im} L }{\mathrm{Im} D \cap \mathrm{Im} L}.
$$
This concludes the proof of $\mathrm{coker} \Pi \cong \mathrm{coker} D$.
\end{proof}

\subsubsection{Compactness}\label{sec:comp}
In this section, we are interested in compactness properties of the moduli space $\mathcal{M}(x_-, x_+)$. The foundational results about understanding the compactification of the space of $J$-holomorphic maps are due to Gromov \cite{Gr85}. The exposition in this section mostly follows \cite{Sa97}, and we cite \cite{MS12,AD14} to complement some details. Since we are dealing with a domain with boundary (infinite strip), compared to \cite{Sa97} where apart from broken trajectories, the holomorphic spheres could form if the energy is concentrated in the compact part of the cylinder, here we have a possibility of the disk that bubbles off when energy is concentrated on the boundary. Since our symplectic manifold and the Lagrangians are exact bubbling can not occur.

The \textit{energy} of a map $u:\R \times [0,1] \to M$ is
$$
E(u)=\int_{-\infty}^{\infty} \int_0^1 \|u_s\|^2_{J_t} dt ds,
$$
where $\|u_s\|^2_{J_t}:=\omega(\partial_s u, J_t \partial_s u)$. Since $u$ is the gradient trajectory of $\mathcal{A}_{H}$ we have the following 
\begin{lemma} \label{lemma:energy-difference}
	For $u \in \mathcal{M}(x_-, x_+,H,J_t)$ then $E(u)=\mathrm{A}_H(x_+) -\mathcal{A}_H(x_-)$.
\end{lemma}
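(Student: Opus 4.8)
The plan is to compute the energy $E(u)$ directly by substituting Floer's equation into the integrand, rewriting everything as an exact form plus the differential of the action functional, and then integrating over the strip $\R \times [0,1]$ using Stokes' theorem together with the asymptotic conditions $u(s,\cdot) \to x_{\pm}$. This is the standard "energy identity" computation, and it parallels exactly the variational calculation already carried out in Lemma \ref{lemma:diff_action}.

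First I would recall that for a solution $u \in \mathcal{M}(x_-, x_+, H, J_t)$ of Floer's equation $\partial_s u + J_t(\partial_t u - X_{H_t}) = 0$, one has $\partial_s u = -J_t(\partial_t u - X_{H_t})$, so
\begin{equation*}
\|u_s\|^2_{J_t} = \omega(\partial_s u, J_t \partial_s u) = \omega(\partial_s u, \partial_t u - X_{H_t}) = \omega(\partial_s u, \partial_t u) - dH_t(\partial_s u).
\end{equation*}
Next I would identify the first term as $\partial_s(\widehat{\lambda}(\partial_t u)) - \partial_t(\widehat{\lambda}(\partial_s u))$, using $\omega = d\widehat{\lambda}$ and the Cartan-type formula $d\widehat{\lambda}(\partial_s u, \partial_t u) = \partial_s(\widehat{\lambda}(\partial_t u)) - \partial_t(\widehat{\lambda}(\partial_s u))$ (valid since $[\partial_s, \partial_t] = 0$), and the second term as $\partial_s(H_t(u))$ at fixed $t$ (here $H_t$ has no explicit $s$-dependence, so $dH_t(\partial_s u) = \partial_s(H_t \circ u)$). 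Thus the integrand becomes $\partial_s\bigl(\widehat{\lambda}(\partial_t u) - H_t(u)\bigr) - \partial_t\bigl(\widehat{\lambda}(\partial_s u)\bigr)$.

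Then I would integrate over $[0,1]$ in $t$: the term $\partial_t(\widehat{\lambda}(\partial_s u))$ integrates to $\widehat{\lambda}(\partial_s u)\big|_{t=0}^{t=1}$, which vanishes because $u(s,i) \in L_i$ for all $s$, hence $\partial_s u(s,i) \in T L_i$, and $\widehat{\lambda}|_{L_i} = 0$. This leaves $\int_0^1 \|u_s\|^2_{J_t}\, dt = \partial_s \int_0^1 \bigl(\widehat{\lambda}(\partial_t u) - H_t(u(s,t))\bigr)\, dt = \partial_s \mathcal{A}_H(u(s,\cdot))$, using that $\int u(s,\cdot)^* \widehat{\lambda} = \int_0^1 \widehat{\lambda}(\partial_t u)\, dt$. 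Integrating over $s \in \R$ and invoking the asymptotic conditions $u(s,\cdot) \to x_{\pm}$ as $s \to \pm\infty$ (together with the fact, guaranteed by the maximum principle of Proposition \ref{prop:maximum}, that the image of $u$ stays in a compact set, so the limits and integrals are well-behaved) yields $E(u) = \mathcal{A}_H(x_+) - \mathcal{A}_H(x_-)$.

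The main technical point to be careful about is justifying the convergence at the ends: one needs that $\partial_s u \to 0$ and $u(s,\cdot) \to x_{\pm}$ in a strong enough sense (the sub-exponential convergence built into the definition of the function space $\mathcal{W}^{1,p}(x_-,x_+)$) that $\mathcal{A}_H(u(s,\cdot)) \to \mathcal{A}_H(x_{\pm})$ as $s \to \pm\infty$ and that no energy leaks to $s = \pm\infty$ beyond the difference of action values. Everything else is a routine manipulation of exact forms, and the Lagrangian boundary condition combined with $\widehat{\lambda}|_{L_i} = 0$ is precisely what makes the boundary term on $\partial([0,1]_t)$ drop out — this is the same mechanism already used in Lemma \ref{lemma:diff_action} and in the maximum principle proof.
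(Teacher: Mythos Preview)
Your proof is correct and follows exactly the approach the paper has in mind: the paper states the lemma with the one-line justification ``since $u$ is the gradient trajectory of $\mathcal{A}_H$'' and later spells out essentially the same computation in the $s$-dependent setting (the energy estimate preceding the definition of the filtered groups). Your write-up is simply a more detailed version of that standard energy identity, using the same ingredients: Floer's equation, $\omega = d\widehat{\lambda}$, $\widehat{\lambda}|_{L_i}=0$ to kill the $t$-boundary term, and the asymptotics at $s\to\pm\infty$.
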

As a consequence, the differential $d:CF_*(L_0, L_1;H_t) \to CF_{*-1}(L_0, L_1;H_t)$ drops the action (note that the input for the differential is at $+\infty$). The following lemma plays an important role in studying the compactness properties of moduli spaces. It guarantees that every solution of Floer's equation with finite energy is asymptotic to the chords of $H_t$ with endpoints on $L_i$.

\begin{lemma}\label{lemma:energy-orbit}
	If all the chords of $H_t$ are non-degenerate, and if $u:\R \times [0,1] \to M$ with $u(s,i) \in L_i$ solves equation (\ref{eq:floer_eq}) with $E(u) < \infty$ then there exist $y_{\pm} \in \mathrm{Crit} (\mathcal{A}_H)$ such that $u(s,t) \to y_{\pm}(t)$ when $s \to \pm \infty$, uniformly in $t$. 
\end{lemma}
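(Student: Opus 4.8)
The plan is to run the standard elliptic bootstrapping / no-bubbling argument, adapted to the strip with Lagrangian boundary conditions, exactly as in \cite[\S2]{Sa97} and \cite[\S8, \S9]{AD14}. First I would use the maximum principle (Proposition \ref{prop:maximum}, or rather Lemma \ref{lemma:no_escape_2} for the Hamiltonians used here) to confine the image of $u$ to a fixed compact set $K \subset \widehat M$; on $K$ the geometry is uniformly bounded, so all the closed-manifold techniques apply. The finite energy hypothesis $E(u)<\infty$ forces $\int_{|s|\ge T}\int_0^1 \|u_s\|^2_{J_t}\,dt\,ds \to 0$ as $T\to\infty$, so in particular one can pick sequences $s_k\to+\infty$ along which $\|\partial_s u(s_k,\cdot)\|_{L^2([0,1])}\to 0$.

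The heart of the argument is an a priori $C^1$ bound on $u$, obtained by excluding bubbling. The standard dichotomy (Hofer's lemma / the rescaling argument, see \cite[\S4.2]{MS12}) says that if $\sup |du|=\infty$ then after rescaling one extracts either a nonconstant $J$-holomorphic sphere $v:S^2\to \widehat M$ (if the blow-up point is interior) or a nonconstant $J$-holomorphic disk $v:(D^2,\partial D^2)\to(\widehat M, L_i)$ (if the blow-up point is on the boundary $\R\times\{i\}$). By Stokes and $d\widehat\lambda = \omega$, the sphere would have $\int v^*\omega = 0$ and the disk would have $\int v^*\omega = \int_{\partial D^2} v^*\widehat\lambda = \int_{\partial D^2} d(f_{L_i}\circ v) = 0$ because $\widehat\lambda|_{L_i}=df_{L_i}$ (and we have normalized $\widehat\lambda|_{L_i}=0$); in either case the energy identity $\int v^*\omega = \int |dv|^2_J/2$ (compatibility of $J$) forces $v$ constant, a contradiction. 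Hence $\sup|du|<\infty$, and then elliptic regularity for the (inhomogeneous) Cauchy–Riemann operator with totally real boundary conditions bootstraps this to uniform $C^\infty_{loc}$ bounds on $u$ and all derivatives on the ends.

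With uniform derivative bounds in hand, I would finish by a compactness-plus-uniqueness argument. Along $s_k\to+\infty$, the loops $u(s_k+\cdot,\cdot)$ subconverge in $C^\infty$ on compact sets to a solution $u_\infty$ of Floer's equation that is $s$-independent (since $\|\partial_s u(s_k,\cdot)\|_{L^2}\to 0$ and, by the bounds, the $s$-derivative decays uniformly), hence $u_\infty(t)=y_+(t)$ is a Hamiltonian chord with endpoints on $L_i$, i.e.\ $y_+\in\crit(\mathcal A_H)$; non-degeneracy of the chords guarantees $\crit(\mathcal A_H)$ is discrete, so this chord is independent of the subsequence. The upgrade from subsequential convergence to genuine convergence $u(s,\cdot)\to y_+$ as $s\to+\infty$ (uniformly in $t$) is the usual consequence of the isolatedness of $y_+$ together with an exponential-decay estimate near a non-degenerate chord (the linearized operator at $y_+$ has no kernel, so one gets $\|u(s,\cdot)-y_+\|_{C^0}\le Ce^{-\delta s}$); the same run backwards gives $y_-$ as $s\to-\infty$. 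The main obstacle is the bubbling analysis on the boundary — establishing the mean value / monotonicity estimates that make the rescaling argument work near $\R\times\{0,1\}$ in the presence of the Hamiltonian perturbation term $J_tX_{H_t}$ — but since $\widehat M$ and the $L_i$ are exact this is only a technical adaptation of \cite[\S2]{Sa97}, with no new phenomena, and I would simply cite \cite{Sa97, MS12, AD14} for the details.
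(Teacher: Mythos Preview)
Your proposal is correct and follows the standard argument that the paper itself only sketches: the paper does not prove this lemma but simply refers to \cite[Proposition 1.21]{Sa97} and \cite[\S 6.5]{AD14}, summarizing the strategy as an Arzel\`a--Ascoli argument for approximate solutions of $y'=X_{H_t}(y)$ together with an elliptic estimate showing $|\partial_s u|\to 0$ uniformly. Your plan is a more fleshed-out version of exactly this, with the bubbling exclusion (which the paper treats separately in the proposition immediately following the lemma) folded in to obtain the needed $C^1$ bounds.
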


One can additionally show that $u$ converges sub-exponentially. For the proof see \cite[Proposition 1.21]{Sa97} and \cite[\S 6.5]{AD14}. The proof uses Arzel\`a-Ascoli theorem to show that every map $y$ which is an approximate weak solution of $y'=X_{H_t}(y)$ must be $C^0$ close to a genuine solution. The second part is an elliptic estimate to show that if energy $E(u)$ is finite, then $|\partial_s u|$ converges to $0$ when $s \to \pm \infty$, uniformly in $t$.

\begin{prop}
	Let $u_n \in \mathcal{M}(x_-, x_+)$ be a sequence, assume that  $\|\partial_s u_n\|_{L^{\infty}}$ is not bounded. Then there exist a convergent sequence $(s'_n, t'_n)$, a sequence $R_n$, and a sequence $\epsilon_n$, with the following properties:
	\begin{itemize}
		\item $\epsilon_n R_n \to \infty$
		\item $w_n(s,t) := u_n(s'_n+s/R_n ,t'_n + t/R_n)$ converges to a $J_{t_{\infty}}$-holomorphic map $w:\Sigma \to M$ in $C^{\infty}_{loc}$ topology.
	\end{itemize} 
	The domain of $w_n$ is $\Sigma_n= \mathbb{D}(\epsilon_n R_n)$ if $t_{\infty} \notin \{0,1\}$, $\Sigma_n= \mathbb{D}(\epsilon_n R_n) \cap \mathbb{H}$ if $t_{\infty}=0$, and $\Sigma_n= \mathbb{D}(\epsilon_n R_n) \cap -\mathbb{H}$ if $t_{\infty}=1$.
\end{prop}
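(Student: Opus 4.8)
This is the standard rescaling (``bubbling'') construction of Gromov and Hofer adapted to the infinite strip with Lagrangian boundary conditions; compare \cite[\S 4.6]{MS12}, \cite[\S 3]{Sa97} and \cite[\S 6.5]{AD14}. The plan is to locate the points where $|\nabla u_n|$ blows up, find the correct zoom rate there via Hofer's lemma, rescale so that the rescaled maps have uniformly bounded gradient and solve a Floer equation whose Hamiltonian term decays, and then extract a $C^\infty_{loc}$-limit by elliptic bootstrapping together with Arzel\`a--Ascoli, reading off its domain from where the blow-up concentrates. For the blow-up data: since $|\nabla u_n|^2\ge|\partial_s u_n|^2$, the hypothesis forces $c_n:=\|\nabla u_n\|_{L^\infty}\to\infty$ after passing to a subsequence; pick $(s_n,t_n)$ with $|\nabla u_n(s_n,t_n)|\ge c_n/2$ and set $\delta_n:=c_n^{-1/2}\to0$. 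The strip $\R\times[0,1]$ is complete, so Hofer's lemma (e.g.\ \cite[Lemma 4.6.4]{MS12}) applied to $|\nabla u_n|$ with base point $(s_n,t_n)$ and scale $\delta_n$ yields points $(s_n',t_n')$ with $|(s_n',t_n')-(s_n,t_n)|\le2\delta_n$, radii $\epsilon_n\in(0,\delta_n]$ and, with $R_n:=|\nabla u_n(s_n',t_n')|$,
\begin{equation*}
  R_n\ge\tfrac{1}{2}c_n\to\infty,\qquad \epsilon_nR_n\ge\tfrac{1}{2}c_n^{1/2}\to\infty,\qquad |\nabla u_n|\le2R_n\ \text{on}\ B_{\epsilon_n}(s_n',t_n').
\end{equation*}
Pass to a further subsequence so that $t_n'\to t_\infty\in[0,1]$; in the boundary cases, i.e.\ when $R_n\,\mathrm{dist}(t_n',\{0,1\})$ stays bounded, one first moves the blow-up point onto the nearer boundary component, which changes the rescaled map below only by a bounded translation in $t$ and arranges $t_n'\equiv0$ or $t_n'\equiv1$, hence $t_\infty\in\{0,1\}$.

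\emph{Rescaling.} Put $w_n(s,t):=u_n(s_n'+s/R_n,\ t_n'+t/R_n)$ on $\Sigma_n:=\{(s,t):|(s,t)|<\epsilon_nR_n,\ t_n'+t/R_n\in[0,1]\}$, which by the choices above is $\mathbb{D}(\epsilon_nR_n)$ in the interior case and $\mathbb{D}(\epsilon_nR_n)\cap\mathbb{H}$ (resp.\ $\cap\,-\mathbb{H}$) in the $t_\infty=0$ (resp.\ $t_\infty=1$) case, since there $t_n'\in\{0,1\}$ and the remaining constraint is vacuous on the ball for $n$ large. Then $|\nabla w_n|\le2$ on $\Sigma_n$ (by the last bound above and $|(s,t)|/R_n<\epsilon_n$), $|\nabla w_n(0)|=1$, and $w_n$ solves the rescaled Floer equation
\begin{equation*}
  \partial_sw_n+J_{t_n'+t/R_n}\Big(\partial_tw_n-\tfrac{1}{R_n}X_{H_{t_n'+t/R_n}}(w_n)\Big)=0,
\end{equation*}
with $w_n(s,\cdot)\in L_i$ on the part of $\partial\Sigma_n$ coming from $\partial(\R\times[0,1])$. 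By Proposition~\ref{prop:maximum} all $\mathrm{Im}(u_n)$, hence all $\mathrm{Im}(w_n)$, lie in one fixed compact set $K\subset M$; consequently $\tfrac{1}{R_n}X_H\to0$ and $J_{t_n'+t/R_n}\to J_{t_\infty}$ in $C^\infty_{loc}$.

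\emph{Convergence.} On each fixed ball or half-ball the $w_n$ have uniform $C^0$ bounds (images in $K$) and uniform $C^1$ bounds ($|\nabla w_n|\le2$); interior and boundary elliptic estimates for the perturbed Cauchy--Riemann operator with totally real boundary condition --- via the doubling/reflection trick across the Lagrangian, cf.\ \cite[\S B.4]{MS12}, \cite[\S 6.5]{AD14} --- plus a bootstrap from the rescaled equation yield uniform $C^k_{loc}$ bounds for all $k$. Arzel\`a--Ascoli and a diagonal subsequence then give $w_n\to w$ in $C^\infty_{loc}$; since $\epsilon_nR_n\to\infty$ the $\Sigma_n$ exhaust $\Sigma=\C$, $\mathbb{H}$ or $-\mathbb{H}$ in the three cases, so $w:\Sigma\to M$, and passing to the limit in the rescaled equation (using $\tfrac{1}{R_n}X_H\to0$) it satisfies $\partial_sw+J_{t_\infty}\partial_tw=0$, with boundary on $L_i$ when $\Sigma=\pm\mathbb{H}$. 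Finally $|\nabla w(0)|=1$, so $w$ is non-constant.

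\emph{Main obstacle.} The elliptic bootstrap and the Arzel\`a--Ascoli extraction are routine. The genuinely delicate point is the bookkeeping of the domains: one must follow the constraint $t\in[0,1]$ through the rescaling, which is what forces the subsequence dichotomy ``rescaled distance to the boundary bounded vs.\ unbounded'' and, in the bounded case, the re-centering of the blow-up point onto $\partial(\R\times[0,1])$, so that the limit domain is exactly $\C$, $\mathbb{H}$ or $-\mathbb{H}$ as claimed; enforcing the Lagrangian boundary condition uniformly in $n$ in the elliptic estimates (the doubling trick) is the second point that needs attention.
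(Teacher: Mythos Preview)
Your argument is correct and follows essentially the same route as the paper: locate a blow-up point, apply Hofer's lemma with scale $c_n^{-1/2}$, rescale by $R_n$, use the uniform derivative bound plus Arzel\`a--Ascoli/elliptic regularity to extract a $C^\infty_{loc}$ limit, and check that the Hamiltonian term disappears in the limit so that $w$ is genuinely $J_{t_\infty}$-holomorphic and non-constant. The only cosmetic differences are that the paper rescales by $|\partial_s u_n|$ rather than $|\nabla u_n|$ (equivalent up to a bounded error since $X_H$ is bounded on the compact set $K$) and that the paper simply says ``by analogous arguments'' for the boundary case $t_\infty\in\{0,1\}$, whereas you spell out the re-centering onto $\partial(\R\times[0,1])$ that makes the domain literally a half-disk --- a point the paper's statement of the proposition tacitly assumes but its proof does not address.
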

\begin{proof}
	The proof uses Hofer's lemma (see \cite[Lemma 6.6.3]{AD14}):
	\begin{lemma}
		Let $(X,d)$ be a complete metric space, and $f:X \to \R$ a non-negative continuous map. For every $x \in X$ with $f(x)>0$, and every $\delta>0$ there exists $x' \in X$, and $0<\epsilon\leq \delta$ such that 
		$$
		d(x,x')< 2 \epsilon, \hspace{2mm} \sup_{B(x', \epsilon)} f \leq 2 f(x'), \hspace{5mm} \epsilon f(x') \geq \delta f(x).
		$$
	\end{lemma}
	Since $\|\partial_s u_n\|_{L^{\infty}}$ is unbounded, after passing to a subsequence of $u_n$, there exists a sequence $(s_n,t_n) \in \R \times [0,1]$ such that $|\partial_s u_n(s_n, t_n)| \to \infty$. Since $u_n \in \mathcal{M}(x_-, x_+)$, the sequence $(s_n, t_n)$ stays in a compact set. Hence, there is a convergent subsequence (again denoted by $(s_n, t_n)$) $(s_n, t_n) \to (s_{\infty}, t_{\infty})$. 
	
	We apply Hofer's lemma to $X = \R \times [0,1]$, $f_n(s,t) = |\partial_s u_n(s,t)| $, $x_n =(s_n,t_n)$ and $\delta_n = |\partial_s u_n(s_n,t_n)|^{-1/2}$. We get $x'_n = (s'_n, t'_n)$ and $\epsilon_n \leq \delta_n$ that satisfy:
	\begin{itemize}
		\item $|(s_n - s'_n, t_n -t'_n)| < 2\epsilon_n \leq 2 \delta_n$,
		\item $\sup\limits_{\mathbb{D}((s'_n, t'_n), \epsilon_n ) \cap \R\times [0,1]} |\partial_s u_n| \leq 2 |\partial_s u_n(s'_n, t'_n)|$,
		\item $\epsilon_n |\partial_s u_n(s'_n, t'_n)| \geq \delta_n f_n(x) = |\partial_s u_n(s_n, t_n)|^{1/2}$.
	\end{itemize}
	 
	 Set $R_n = |\partial_s u_n(s'_n, t'_n)|$, it follows that $\epsilon_n R_n \to \infty$. If $t_{\infty}\notin \{0,1\}$, then for $n$ big enough $\mathbb{D}((s'_n, t'_n), \epsilon_n ) \subset \R\times (0,1)$. We have that
	$$
	\begin{aligned}
		\partial_s w_n(s,t)=\frac{\partial_s u_n((s'_n+s/R_n ,t'_n + t/R_n))}{R_n} \leq 2,
	\end{aligned}
	$$
	since $(s,t) \in \mathbb{D}((s'_n, t'_n), \epsilon_n ) $ and $\sup\limits_{\mathbb{D}((s'_n, t'_n), \epsilon_n ) \cap \R\times [0,1]} |\partial_s u_n| \leq 2 |\partial_s u_n(s'_n, t'_n)|$. Since the derivatives of $w_n$ are uniformly bounded, we get by Arzel\`a-Ascoli theorem that $w_n$ converges to $w$ in $C^0_{loc}$ (and hence in $C^{\infty}_{loc}$ by elliptic regularity). Furthermore, we have
	$$
	\begin{aligned}
	\partial_s w + J_{t_{\infty}}(w) \partial_t w &= \lim\limits_{n \to \infty} \partial_s(w_n) + J_t\partial_t (w_n)	\\
	&=\lim\limits_{n \to \infty}\frac{1}{R_n}(\partial_s u_n + J_t \partial_t u_n) = \lim\limits_{n \to \infty}\frac{1}{R_n} J_t X_{H}(u_n) = 0,
	\end{aligned}
	$$ 
	and
	$$
	\partial_s w(0,0) = \lim\limits_{n \to \infty} \partial_s w_n(0,0) = \lim\limits_{n \to \infty} \frac{\partial_s u_n(s'_n, t'_n)}{R_n} = 1.
	$$ 
	We have just shown that $w: \C \to M$ is a non-constant $J_{t_{\infty}}$-holomorphic map with bounded energy $E(w) \leq \mathcal{A}_H(x_+) - \mathcal{A}_H(x_-)$. By analogous arguments, one can analyze the case $t_{\infty} \in \{0,1\}$.
\end{proof}

By removal of singularities \cite[Theorem 4.1.2]{MS12} we get a non-constant $J_{t_{\infty}}$-holomorphic sphere $w:S^2 \to M$, or a disk $w:\mathbb{D} \to M$ with Lagrangian boundary conditions on $L_{t_{\infty}}$. Using that $J_{t_{\infty}}$ is $\omega$-compatible, we have:
$$
0<E(w) = \int_{S^2} w^*\omega =\int_{\emptyset} w^*\lambda = 0,
$$
or in the case of a disk
$$
0<E(w) = \int_{\mathbb{D}} w^*\omega = \int_{S^1} w^* \lambda =0,
$$
which leads to the contradiction. We conclude that the $s$ derivative is uniformly bounded for elements in $\mathcal{M}(x_-, x_+)$, which means that this is relatively compact space in the space of all continuous maps $u:\R \times [0,1] \to M$, with exponential decay towards $x_{\pm}$. Now, if a sequence $u_n \in \mathcal{M}(x_-, x_+)$ converges in $C^{0}_{loc}$ to an element which is not in $\mathcal{M}(x_-, x_+)$ we have that there is a sequence $s_n$ such that $u_n(s+s_n,t)$ converges to a solution of the Floer equation, which is not in $\mathcal{M}(x_-, x_+)$. Since $u$ has finite energy, by Lemma \ref{lemma:energy-orbit} there are orbits $y_{\pm}$ so that $u$ is asymptotic to $y_{\pm}$ when $s \to \pm \infty$. So we have proved
\begin{thm}\label{thm:compactness}
	 If $\mu(x_+) = \mu(x_-)+2$ then
	 $$
	 \partial \Bar{\mathcal{M}}(x_-, x_+) \subset \bigcup_{y} \Bar{\mathcal{M}}(x_-,y) \times \Bar{\mathcal{M}}(y, x_+).
	 $$
\end{thm}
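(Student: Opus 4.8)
The statement is the ``easy'' half of Gromov--Floer compactness for strips; the reverse inclusion --- that every broken configuration actually occurs --- is the gluing statement deferred to Section~\ref{sec:gluing}. The plan is therefore to run the standard broken-trajectory argument, invoking the a priori estimates already assembled. From Lemma~\ref{lemma:energy-difference}, every $u\in\mathcal{M}(x_-,x_+)$ has $E(u)=\mathcal{A}_H(x_+)-\mathcal{A}_H(x_-)$; from Proposition~\ref{prop:maximum}, all such $u$ have image in one fixed compact set $K\subset\widehat M$; and from the Proposition immediately preceding the statement, exactness forbids sphere and disc bubbles, so along any sequence $\|\partial_s u_n\|_{L^\infty}$ stays bounded. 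The one extra ingredient I would record is energy quantisation: since $\crit(\mathcal{A}_H)$ is finite, the set $\{\mathcal{A}_H(x)-\mathcal{A}_H(y)\mid x,y\in\crit(\mathcal{A}_H)\}$ is finite, and a nonconstant solution of~(\ref{eq:floer_eq}) asymptotic to chords $y_\pm$ has $E=\mathcal{A}_H(y_+)-\mathcal{A}_H(y_-)>0$; hence there is $\hbar>0$ below which no nonconstant solution exists.

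First I would fix a sequence $u_n\in\mathcal{M}(x_-,x_+)$ and describe the limit points of $[u_n]$ in $\Bar{\mathcal{M}}(x_-,x_+)$. Using the uniform gradient bound, confinement to $K$, and Arzel\`a--Ascoli with elliptic regularity, a subsequence of $u_n$ --- after translating in $s$ so that the limit is nontrivial --- converges in $C^\infty_{loc}(\R\times[0,1])$ to a solution $v$ of Floer's equation of finite energy, asymptotic by Lemma~\ref{lemma:energy-orbit} to a pair $y_\pm\in\crit(\mathcal{A}_H)$. If no energy escapes (that is, $E(v)=\lim E(u_n)$, $y_-=x_-$, $y_+=x_+$), then $[u_n]\to[v]$ already in $\Bar{\mathcal{M}}(x_-,x_+)$. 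Otherwise energy leaks into the ends of the strip; following \cite{Sa97,AD14}, I would recapture it by selecting further translation sequences $s^{(k)}_n\to\pm\infty$ and repeating the extraction, obtaining nonconstant pieces $v^1,\dots,v^m$ whose asymptotics chain up as $x_-=z_0\rightsquigarrow z_1\rightsquigarrow\cdots\rightsquigarrow z_m=x_+$, the action decreasing strictly along the chain. Since the energies of the pieces add up to at most $\mathcal{A}_H(x_+)-\mathcal{A}_H(x_-)$ and each is at least $\hbar$, the chain is finite.

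It then remains to apply the index constraint. In such a chain the gradings telescope, $\mu(x_+)-\mu(x_-)=\sum_{j=1}^{m}\bigl(\mu(z_j)-\mu(z_{j-1})\bigr)$, and since each $v^j$ is nonconstant, $\Bar{\mathcal{M}}(z_{j-1},z_j)$ is a nonempty manifold of dimension $\mu(z_j)-\mu(z_{j-1})-1\geq0$ by Theorem~\ref{thm:moduli_space}; thus every summand is $\geq1$. As the total is $2$, necessarily $m\leq2$, and a genuinely broken limit ($m=2$) has $\mu(z_1)=\mu(x_-)+1$. This yields
$$
\partial\Bar{\mathcal{M}}(x_-,x_+)\subset\bigcup_{\substack{y\\ \mu(y)=\mu(x_-)+1}}\Bar{\mathcal{M}}(x_-,y)\times\Bar{\mathcal{M}}(y,x_+),
$$
each factor finite by Theorem~\ref{thm:moduli_space}.

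The hard part is the middle step: the soft analysis of splicing together $C^\infty_{loc}$ limits under translations, and in particular the long-neck (annulus) energy estimate showing that \emph{no} energy is lost in the growing necks beyond what the pieces $v^j$ account for, so that the asymptotics chain up exactly ($y^j_+=y^{j+1}_-$ at each node). This is precisely where confinement to $K$ from Proposition~\ref{prop:maximum} and the exactness hypothesis --- killing hidden bubbles, as in the Proposition above --- are indispensable; the remainder is bookkeeping with the energy identity and the additivity of the Maslov grading.
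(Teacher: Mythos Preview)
Your argument is correct and follows the same route as the paper --- the standard Gromov--Floer broken-trajectory argument built on the maximum principle, the bubbling analysis, and Lemma~\ref{lemma:energy-orbit} --- and is in fact considerably more detailed than the paper's own sketch, which stops right after extracting a single shifted limit and declares the theorem proved without spelling out energy quantisation, the chaining of asymptotics, or the index bookkeeping limiting the number of pieces. One small caution: you invoke Theorem~\ref{thm:moduli_space} for the dimension formula $\dim\Bar{\mathcal{M}}(z_{j-1},z_j)=\mu(z_j)-\mu(z_{j-1})-1$, but that theorem packages together transversality, compactness, and gluing, so citing it inside the compactness step is formally circular; what you actually need is only the transversality/index part established in \S\ref{sec:trans}, and it would be cleaner to cite that directly.
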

One has a more general result. If all moduli spaces are cut-out transversally, then $\Bar{\mathcal{M}}(x_-, x_+)$ is a smooth manifold whose codimension $k$ stratum of the corner is a subset of $$\bigcup_{y_1, y_2,....,y_k} \Bar{\mathcal{M}}(x_-,y_1) \times \Bar{\mathcal{M}}(y_1,y_2)\times\cdots \times  \Bar{\mathcal{M}}(y_k, x_+).$$

In the following section on gluing, we demonstrate that every broken trajectory appears as a limit of genuine solutions, so the subset in Theorem \ref{thm:compactness} is, in fact, equality. 

\subsubsection{Gluing}\label{sec:gluing}
In this section, we show that for every broken trajectory $(u,v) \in \Bar{\mathcal{M}}(x_-, y) \times \Bar{\mathcal{M}}(y, x_+)$ there is a nearby solution $u \# v \in \Bar{\mathcal{M}}(x_-, x_+)$. We want to show 
\begin{thm}\label{thm:gluing}
If $\mu(x_+) - \mu(x_-) =2$, there exist $R_0 >0$ and an embedding
$$
\Psi: \Bar{\mathcal{M}}(x_-, y) \times \Bar{\mathcal{M}}(y, x_+) \times (R_0, \infty) \to \Bar{\mathcal{M}}(x_-, x_+),
$$
such that $\lim\limits_{R \to +\infty} \Psi(u,v, R) = (u,v)$.
\end{thm}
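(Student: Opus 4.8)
The plan is to follow the standard Floer-theoretic gluing construction, which proceeds by pre-gluing, Newton iteration, and a surjectivity argument. First I would fix a representative pair $(u,v)$ and, for each large $R$, construct an approximate solution $u\#_R v$ by interpolating: take $u$ on the region $s\le R$, take $v$ shifted by $2R$ on the region $s\ge 3R$, and splice the two together with cutoff functions on the middle neck $[R,3R]\times[0,1]$, using the common asymptotic chord $y$ and the fact that both $u$ and $v$ converge sub-exponentially to $y$ so that $\mathcal{F}_{H,J}(u\#_R v)$ is exponentially small in $R$ in the relevant $\mathcal{L}^p$ norm. The Lagrangian boundary conditions are preserved by the construction since both pieces satisfy $u\#_R v(s,i)\in L_i$.

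Next I would correct the approximate solution to a genuine one. The key analytic input is a uniform right inverse: for $R$ large, the linearized operator $D_{u\#_R v}$ is surjective (this uses that $D_u$ and $D_v$ are surjective because $J$ is regular, together with a standard neck-stretching argument showing the glued operator inherits this) and admits a bounded right inverse $Q_R$ with norm bounded uniformly in $R$. Combined with a uniform quadratic estimate for the nonlinear part of $\mathcal{F}_{H,J}$ near $u\#_R v$ — which holds because the relevant Sobolev embedding constants are $R$-independent on the strip — the Newton–Picard iteration converges to a true solution $u\#v(R)\in\mathcal{M}(x_-,x_+)$ lying in a small ball around $u\#_R v$, and the implicit function theorem gives that the assignment $(u,v,R)\mapsto u\#v(R)$ is smooth. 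Since $\mu(x_+)-\mu(x_-)=2$ and each of $\bar{\mathcal{M}}(x_-,y)$, $\bar{\mathcal{M}}(y,x_+)$ is $0$-dimensional, one checks by the index additivity $\mathrm{ind}\,D_{u\#_R v}=\mathrm{ind}\,D_u+\mathrm{ind}\,D_v+1$ that the resulting piece of $\bar{\mathcal{M}}(x_-,x_+)$ is $1$-dimensional, consistent with the gluing parameter.

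Then I would establish that $\Psi$ is an embedding onto a neighborhood of the broken trajectory. Injectivity for $R$ large follows because distinct input data produce solutions that are $C^0$-far apart on the part of the strip where $u$ (resp. $v$) is supported; that the image is open near $(u,v)$ and that $\Psi(u,v,R)\to(u,v)$ in $\bar{\mathcal{M}}(x_-,x_+)$ as $R\to\infty$ follows from the explicit $R$-dependence of the pre-gluing together with the error bound from the iteration. Finally, combining this with the compactness statement of Theorem \ref{thm:compactness}, which already shows $\partial\bar{\mathcal{M}}(x_-,x_+)$ is contained in the union of products, one concludes that this containment is an equality, so $\bar{\mathcal{M}}(x_-,x_+)$ is a compact $1$-manifold with boundary exactly $\bigcup_y \bar{\mathcal{M}}(x_-,y)\times\bar{\mathcal{M}}(y,x_+)$.

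The main obstacle is the uniform invertibility of the linearized operator along the neck: one must show the right inverses $Q_R$ have operator norm bounded independently of $R$, which is where the exponential convergence to the asymptotic chord $y$ and the non-degeneracy of $y$ are essential — without a uniform bound the Newton iteration cannot be run with $R$-independent constants. I would handle this by the usual patching argument, writing an approximate inverse as a cutoff-weighted combination of the inverses of $D_u$ and $D_v$ and an explicit model inverse on the translation-invariant cylinder over $y$, and estimating the commutator terms coming from the cutoffs; these are controlled by the exponential decay rate, which is strictly positive precisely because $y$ is non-degenerate. Everything else — the quadratic estimate, the index count, injectivity — is comparatively routine once this uniform estimate is in hand. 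For the detailed arguments I would refer to \cite{Sa97, MS12, AD14}.
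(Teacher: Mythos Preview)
Your outline is essentially the standard gluing argument and matches the paper's strategy: pre-glue, establish a uniform right inverse for the linearized operator, then apply the implicit function theorem. The one substantive technical difference is in how the uniform right inverse is obtained. You propose to patch together right inverses of $D_u$, $D_v$, and a translation-invariant model operator on the neck, then estimate commutators with the cutoffs. The paper instead follows Salamon and proves the single estimate $\|D_R^*\eta\|_{W^{1,p}} \le c\,\|D_R D_R^*\eta\|_{L^p}$ uniformly in $R$, by splitting $\eta$ (not the inverse) via cutoffs into pieces supported over $u_R$ and $v_R$ and invoking surjectivity of $D_u$ and $D_v$ separately; the right inverse is then $G_R = D_R^*(D_R D_R^*)^{-1}$. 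This avoids introducing a third model operator on the neck and keeps the bookkeeping to two pieces, which is slightly cleaner, though your approach is also standard and would work.

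One small correction: your index additivity should read $\mathrm{ind}\,D_{u\#_R v} = \mathrm{ind}\,D_u + \mathrm{ind}\,D_v$, with no extra $+1$. Here each summand equals $1$, so the glued index is $2$, and after dividing by the $\R$-action you get the expected $1$-dimensional piece of $\bar{\mathcal{M}}(x_-,x_+)$.
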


The idea is to pre-glue two solutions $(u,v)$ for each $R$  and obtain a smooth map $u \#_R v$, which satisfies Floer's equation up to an error that goes to $0$ when $R \to \infty$. Then by applying the implicit function theorem, we construct the unique solution which is close to the approximate solution, with the help of the kernel of the linearized operator $D_{u \#_R v}$ and a carefully chosen complement of $\ker D_{u \#_R v}$. 

Let $\beta:\R \to [0,1]$ be a smooth function which is $0$ for $s \leq 0$ and $1$ for $s\geq 1$. Furthermore, we assume that $0 \leq \beta' \leq 2$, and $-2 \leq \beta'' \leq 2$. Let $\eta(s,t), \xi(s,t)$ be two sections of $y^*TM$ such that $u(s,t) = \exp_{y(t)} (\eta(s,t))$ for $s \geq s_0$, and $v(s,t) = \exp_{y(t)} (\xi(s,t))$ for $s \leq - s_0$. For $R \geq 2 s_0 +1$ define
\begin{equation*}
u \#_R v (s,t) = \begin{cases} 
			u(s+R,t), &s\leq -R/2 - 1,\\
			\exp_{y(t)}(\beta(-s-R/2) \eta(s+R,t)), & -R/2 - 1 \leq s \leq -R/2,\\
			y(t), & -R/2 \leq s \leq R/2,\\
			\exp_{y(t)}(\beta(s - R/2) \xi(s-R,t)), & R/2 \leq s \leq R/2 +1,\\
			v(s-R, t), &s\geq R/2 +1.
			\end{cases}
\end{equation*}

In order to apply the implicit function theorem we need the following uniform surjectivity estimate. Set: $$D_R: W^{k+1,p}(\R \times [0,1], (u\#_Rv)^*TM) \to W^{k,p}(\R \times [0,1], (u\#_Rv)^*TM),$$ to be the linearization of the Floer section at $u\#_Rv$. The proof follows \cite[Proposition 3.9]{Sa97}.

\begin{prop}
There exist constants $c>0$ and $R_0>0$ such that, for every $R>R_0$ and every $\eta \in W^{2,p}(\R \times [0,1], (u\#_Rv)^*TM),$
$$
\|D^*_R \eta\|_{W^{1,p}} \leq c \|D_R D^*_R \eta\|_{L^p}
$$
\end{prop}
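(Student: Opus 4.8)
The plan is to establish the uniform surjectivity estimate by the standard gluing argument of Floer theory, transplanting the analogous elliptic estimates for $D_u$ and $D_v$ (together with their adjoints) to the pre-glued map. First I would record the key local estimates that hold away from the gluing region: by the analysis of Section~\ref{sec:trans}, both $u$ and $v$ are regular, so $D_u$ and $D_v$ are surjective, and hence their adjoints $D_u^*$ and $D_v^*$ are injective with closed range. Since the chords $x_-,y,x_+$ are non-degenerate, the asymptotic operators $J_0\partial_t + S_\pm$ are invertible on the relevant $W^{1,p}$ spaces (the transversality conditions $\psi_\pm(1)(\R^n\times\{0\}) \pitchfork \R^n\times\{0\}$ hold), which gives exponential decay and a uniform coercive estimate of the form $\|\zeta\|_{W^{1,p}} \le c(\|D_u^*\zeta\|_{L^p})$ on the complement of a fixed compact subset of the strip, and similarly for $v$. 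This is where one invokes Lemma~\ref{lemma:fredholm operator} and Lemma~\ref{lemma:l_p-estimate} in the form that, for the model operators, the $L^p$ term over a bounded window can be absorbed because there is no kernel.

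Next I would carry out the cutoff-and-patch argument. Given $\eta \in W^{2,p}$ on the glued domain, write $\eta = \eta_u + \eta_v$ using cutoff functions adapted to the decomposition of the domain of $u\#_R v$: $\eta_u$ supported on the part identified (via the pre-gluing) with the domain of $u$, extended by the asymptotic behaviour near $y$, and $\eta_v$ on the $v$-part. On each piece, the linearization $D_R$ agrees with $D_u$ (resp.\ $D_v$) up to an error term supported in the overlap region $|s| \in [R/2, R/2+1]$, where the cutoff derivatives $\beta', \beta''$ live and where, crucially, $\eta$ and all the relevant sections are exponentially small in $R$ because they are close to the constant chord $y$. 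Applying the local estimates for $D_u^*$ and $D_v^*$ to $\eta_u$ and $\eta_v$ and summing gives
$$
\|D_R^*\eta\|_{W^{1,p}} \le c\big(\|D_R D_R^*\eta\|_{L^p} + e^{-\delta R}\|D_R^*\eta\|_{W^{1,p}}\big),
$$
where the error collects all commutator terms between $D_R$ and the cutoffs, each estimated by the exponential decay of the asymptotics of $u$ and $v$ near $y$ and the uniform bound on $\beta',\beta''$. For $R$ large enough, $c\,e^{-\delta R} < 1/2$, so the error term is absorbed into the left-hand side, yielding the claimed estimate with a new constant.

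I expect the main obstacle to be the bookkeeping in the overlap/neck region: one must show that the $L^p$-norm of $(D_R - D_u)\eta_u$ over $[-R/2-1,-R/2]\times[0,1]$ (and the symmetric piece on the $v$ side) is genuinely $O(e^{-\delta R}\|\eta\|)$ and not merely $o(1)$. This requires combining three facts in the right order: (i) the uniform $C^1$-bound on $u\#_R v$ so that $D_R$ has coefficients bounded independently of $R$; (ii) the sub-exponential convergence $u(s,\cdot)\to y$, $v(s,\cdot)\to y$ established in Lemma~\ref{lemma:energy-orbit} (improved to exponential using non-degeneracy), which controls both $\eta_u$ itself and the difference of the symmetric operators $S$ from their limits $S_\pm$ on the neck; and (iii) the explicit bounds $0\le\beta'\le 2$, $-2\le\beta''\le 2$ so the cutoff commutators do not introduce $R$-dependence. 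The passage from the adjoint estimate to surjectivity of $D_R$ itself — needed downstream for the implicit function theorem in Theorem~\ref{thm:gluing} — is then immediate: $\|D_R^*\eta\|_{W^{1,p}} \le c\|D_R D_R^*\eta\|_{L^p}$ forces $D_R D_R^*$ to have closed range and trivial kernel, hence (being self-adjoint) to be invertible, so $D_R$ admits the bounded right inverse $D_R^*(D_R D_R^*)^{-1}$ with norm bounded uniformly in $R$.
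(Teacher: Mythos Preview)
Your overall strategy (split $\eta$ with cutoffs, transplant the estimates for $D_u^*$ and $D_v^*$, absorb the error) is the same as the paper's, but there is a genuine gap in how you control the error term.

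You place the cutoff on the transition interval $[R/2,R/2+1]$ and invoke the bounds $0\le\beta'\le2$, $|\beta''|\le2$; then you claim the resulting commutator terms are $O(e^{-\delta R}\|D_R^*\eta\|_{W^{1,p}})$ because ``$\eta$ and all the relevant sections are exponentially small in $R$''. This is false: $\eta$ is an \emph{arbitrary} element of $W^{2,p}$ and has no reason to be small anywhere. The exponential decay near $y$ controls the coefficients of $D_R-D_u$ (the difference of the zeroth-order terms $S$), but the dominant error is the commutator $[D_R^*,\beta]\eta=-\beta'\eta$ (and the analogous second-order commutator in $D_RD_R^*$), whose $L^p$ norm is of order $\|\beta'\|_\infty\cdot\|\eta\|_{L^p}$. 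With your fixed-width cutoff this is $O(\|\eta\|)$, not $O(e^{-\delta R}\|\eta\|)$, and cannot be absorbed.

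The paper's remedy is to stretch the cutoff across the entire neck: set $\beta_R(s)=\beta(s/R+1/2)$, so that $|\beta_R'|\le 2/R$ and $|\beta_R''|\le 2/R^2$. Then the commutator terms are $O(R^{-1}\|\eta\|_{W^{1,p}})$ and $O(R^{-1}\|D_R^*\eta\|_{W^{1,p}})$. To close the argument one also needs the auxiliary bound $\|\eta\|_{W^{1,p}}\le c\|D_R^*\eta\|_{L^p}$, which the paper proves first by the same stretched-cutoff trick; only after that can the residual $\|\eta\|$ terms in the main estimate be traded for $\|D_R^*\eta\|$ and absorbed. Your proposal conflates these two steps and misidentifies the source of smallness.
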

\begin{proof}
Set 
$$
u_R(s,t):=\begin{cases} u\#_Rv, &s \leq 0\\ y(t), &s\geq 0, \end{cases}  \hspace{5mm} v_R(s,t):=\begin{cases} y(t), &s\leq 0,\\ u\#_Rv, &s \geq 0.\end{cases} 			
$$
For $s \leq -R/2 - 1$ we have $u_R(s,t) = u(s+R,t)$, and similarly $v_R(s,t) = v(s-R,t)$ for $s \geq R/2 +1$. The subexponential convergence from Lemma \ref{lemma:energy-orbit} implies that the difference between $u_R(s,t)$ and $u(s+R, t)$ is exponentially small in $C^{\ell}$, for any $\ell$. An analogous statement holds for $v_R$ and the shift of $v$. Since the linearized operators $D_u$ and $D_v$ are surjective, we have for $R \geq R_0$ and $\eta_u \in W^{2,p}$:
$$
\| \eta_u\|_{W^{1,p}} \leq c_0 \|D_{u_R}^* \eta_u\|_{L^p}, \hspace{5mm} \| D^*_{u_R}\eta_u\|_{W^{1,p}} \leq c_1 \|D_{u_R} D_{u_R}^* \eta_u\|_{L^p},
$$ 
and the same inequalities with $u$ replaced with $v$ everywhere. 

Set $\beta_R(s) = \beta(s/R + 1/2)$, and for $\eta \in W^{1,p}(\R \times [0,1], (u\#_R v)^*TM)$ define:
$$
\begin{aligned}
\eta_u (s,t)&=(1-\beta_R(s,t))\eta(s,t) \in T_{u_R(s,t)} M,\\
\eta_v (s,t) &= \beta_R(s,t) \eta(s,t) \in T_{v_R(s,t)}M.
\end{aligned}
$$
Using the properties of $\beta$, and the fact that $D^*_R \eta_u = D^*_{u_R} \eta_u$ (and the same for $v_R$, and $\eta_v$) we have:
$$
\begin{aligned}
\|\eta\|_{W^{1,p}} &\leq \|\eta_u\|_{W^{1,p}} + \|\eta_v\|_{W^{1,p}}\\
&\leq c_0(\|D^*_{u_R} \eta_u\|_{L^p} + \|D^*_{v_R} \eta_v\|_{L^p})\\
&=c_0(\|D^*_{R} ((1-\beta_R) \eta)\|_{L^p} + \|D^*_{R} (\beta_R \eta)\|_{L^p}).
\end{aligned}
$$
Now $D^*_{R} (\beta_R \eta) = \beta_R D^*_R \eta - \beta'_R \eta$ implies:
$$
\|D^*_{R} (\beta_R \eta)\|_{L^p} + \|D^*_{R} ((1-\beta_R) \eta)\|_{L^p} \leq 2 \|D_R^* \eta\|_{L^p}+ \frac{4}{R} \|\eta\|_{L^p},
$$
using the fact that $0\leq \beta_R' \leq 2/R$. For $4c_0/R \leq 1/2$ we get: $$\|\eta\|_{W^{1,p}} \leq 4c_0 \|D^*_R \eta\|_{L^p}.$$
It follows from the definition of $\eta_u$ and $\eta_v$ that $D^*_R \eta = D^*_{u_R} \eta_u + D^*_{v_R} \eta_v$. From this we obtain:
$$
\begin{aligned}
\|D^*_R \eta\|_{W^{1,p}} &\leq \|D^*_{u_R} \eta_u\|_{W^{1,p}} + \|D^*_{v_R} \eta_v\|_{W^{1,p}} \\
&\leq c_1 (\|D_{u_R} D^*_{u_R} \eta_u\|_{L^{p}} + \|D_{v_R} D^*_{v_R} \eta_v\|_{L^{p}}) \\
& = c_1( \|D_R(\beta_R D^*_R \eta - \beta'_R \eta)\|_{L^p} + \|D_R((1-\beta_R)D^*_R \eta + \beta'_R \eta)\|_{L^p})\\
&\leq 2 c_1 \|D_R D^*_R \eta\|_{L^p} +\frac{4c_1}{R} \|D^*_R \eta - D_R \eta \|_{L^p} + \frac{2c_1}{R}\| \eta \|_{L^p}.
\end{aligned}
$$
Here we have used that $0 \leq \beta'_R \leq 2/R$, and that $-1/R\leq  \beta''_R \leq 1/R$ for $R>2$ (since $|\beta''_R| \leq 2/R^2$). Using that $\|D_R \eta\|_{L^p} \leq C \|\eta\|_{W^{1,p}}$ and $\|\eta\|_{L^p} \leq \|\eta \|_{W^{1,p}}$ we get $\|D^*_R \eta\|_{L^p} \leq 2 c_1 \|D_R D^*_R \eta\|_{L^p} +4c_1 /R  \|D^*_R \eta \|_{W^{1,p}} + c_2 / R \|\eta\|_{W^{1,p}}$. Together with $\|\eta\|_{W^{1,p}} \leq 4c_0 \|D^*_R \eta\|_{L^p}$ we get
$$
\|D^*_R \eta\|_{W^{1,p}} \leq 2c_1 \|D_R D^*_R \eta\|_{L^p} + \frac{2c_1 + 4c_0 c_2}{R}\|D^*_R \eta\|_{L^p},
$$
so for $(2c_1 + 4c_0 c_2)/R < 1/2$ we have $\|D^*_R \eta\|_{W^{1,p}} \leq 4c_1 \|D_R D^*_R \eta\|_{L^p}$.

\end{proof}

One can check that  $\mathcal{F}(u \#_R v)$ converges to $0$ as $R \to \infty$ in $L^p$ (and in $C^{\infty}$ topology). Since the vertical derivative $D_R$ is uniformly surjective, one can find a right inverse $G_R$ whose image is the complement of $\ker D_R$. Since $\mathcal{F}(u \#_R v)$ converges to zero, for $R$ large enough the image of $G_R$ will intersect $\exp_{u \#_R v}^{-1} (\mathcal{M}(x_-, x_+) )$ in the unique point, which determines the variation of $u \#_R v$ that makes it a genuine solution. For more details see \cite[Section 9.4]{AD14}. As a consequence, we get Theorem \ref{thm:gluing}.

\subsection{Continuation maps}

Given two admissible Hamiltonians $H^-_t$ and $H^+_t$, and two regular almost complex structures $J^-_t$ and $J^+_t$, we would like to have a comparison map 

$$
\Phi:CF_*(L_0, L_1; H^+_t, J^+_t) \to CF_*(L_0, L_1; H^-_t, J^-_t).
$$

In the Floer theory for closed Lagrangians, such a map always exists by counting elements in a zero-dimensional component of the moduli space that solves:

\begin{equation}\label{eq:continuation}
\partial_s u + J_{s,t}(\partial_t u - X_{H_{s,t}})= 0,
\end{equation}
where $J_{s,t}$ is a family of almost complex structures such that $J_{s,t} = J^{\pm}_t$ and $H_{s,t} = H^{\pm}_t$ for $\pm s \geq s_0$. If the path of almost complex structures $J_{s,t}$ is regular, we need to ensure that the zero-dimensional component is a finite set of points. For this, we again appeal to the maximum principle from Proposition \ref{prop:maximum}. This works for Hamiltonians $H^{\pm}_t$ that are linear at infinity, so $H^{\pm}_t(x,r)=a_{\pm} r + b_{\pm}$. From Proposition \ref{prop:maximum} we see that if  $a_- > a_+$ we can construct a homotopy $H_{s,t}$ such that $H_{s,t}(x,r) = f_s(r)$ outside of compact set, where $f_s$ satisfies $\partial_s \partial_r f_s(r) \leq 0$. Consider a cut-off function $\beta:\R \to [0,1]$, $\beta(s)=1$ for $s \geq s_0$, $\beta(s) = 0$ for $s \leq s_0$ and $\beta'(s) \geq 0$. The homotopy $$H_{s,t} = (1-\beta(s)) H^-_t + \beta(s) H^+_t,$$ satisfies the requirements for the maximum principle. Indeed, we have:

$$
\begin{aligned}
	\partial_s \partial_r H_{s,t} &= (1-\beta(s)) H^-_t + \beta(s) H^+_t)\\
	&= \partial_s( (1-\beta(s))a_- + \beta(s) a_+)\\
	&= \beta'(s) (a_+ - a_-) \leq 0.
\end{aligned}
$$

In general, we will use Lemma \ref{lemma:no_escape_2}, which can be applied to Hamiltonians that are contact at infinity, where it is enough to have $h^-_t\geq h^+_t$ for contact Hamiltonians $h_{\pm}:\partial M \to \R$.

Let $H^{\pm}_t$ be two admissible Hamiltonians, and let  $H_{s,t}$ and $J_{s,t}$ be regular homotopies. Hence, for every $x_{\pm} \in \mathrm{Crit}(\mathcal{A}_{H_{\pm}})$ we have that the moduli space $\mathcal{M}(x_-, x_+, H_{s,t}, J_{s,t})$ of solutions to (\ref{eq:continuation}) with the boundary on $L_i$, asymptotic to $x_{\pm}$ is a smooth manifold. 

The continuation map $\Phi_{H_{s,t}}:CF_*(L_0, L_1; H^+_t, J^+_t) \to CF_*(L_0, L_1; H^-_t, J^-_t)$ is defined on generators $x_+ \in CF_*(H^-_t, J^-_t)$ by:
$$
\Phi_{H_{s,t}} (x_+) = \sum_{\substack{x_-\\ \mu(x_-) = \mu(x_+)}} \#_2 \mathcal{M}(x_-, x_+, H_{s,t}, J_{s,t}) x_-,
$$
and extended to the whole chain complex by linearity. Note that here there is no $\R$ action by translation in the $s$-coordinate since the equation is not invariant under it.

\begin{lemma}\label{lemma:continuation}
The map $\Phi_{H_{s,t}}$ is a chain map.	
\end{lemma}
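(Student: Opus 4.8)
The plan is to run the same argument that proved $d^2=0$ (Lemma \ref{lemma:differential}), but for the one-parameter, $s$-dependent equation (\ref{eq:continuation}) instead of Floer's equation. Fix $x_{\pm}\in\crit(\mathcal{A}_{H^{\pm}_t})$ with $\mu(x_+)-\mu(x_-)=1$ and look at the moduli space $\mathcal{M}(x_-,x_+,H_{s,t},J_{s,t})$. Granting the $s$-dependent analogues of Theorems \ref{thm:moduli_space}, \ref{thm:compactness} and \ref{thm:gluing} — generic transversality for homotopies $J_{s,t}$, compactness, and gluing — this is a smooth $1$-manifold with a compactification $\Bar{\mathcal{M}}(x_-,x_+,H_{s,t},J_{s,t})$ whose boundary is
$$
\partial\Bar{\mathcal{M}}(x_-,x_+,H_{s,t},J_{s,t}) \cong \Big(\bigcup_{\mu(z)=\mu(x_+)} \Bar{\mathcal{M}}(x_-,z;H^-_t,J^-_t)\times\mathcal{M}(z,x_+,H_{s,t},J_{s,t})\Big)\cup\Big(\bigcup_{\mu(y)=\mu(x_-)} \mathcal{M}(x_-,y,H_{s,t},J_{s,t})\times\Bar{\mathcal{M}}(y,x_+;H^+_t,J^+_t)\Big).
$$
The first family is the locus where energy escapes as $s\to-\infty$ — a genuine $(H^-_t,J^-_t)$-Floer trajectory from $x_-$ to $z$ followed by a continuation solution from $z$ to $x_+$ — and the second is where energy escapes as $s\to+\infty$, a continuation solution from $x_-$ to $y$ followed by an $(H^+_t,J^+_t)$-Floer trajectory from $y$ to $x_+$.

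Next I would check the index bookkeeping: since $\dim\mathcal{M}(a,b,\cdot)=\mu(b)-\mu(a)$ and both the differentials and $\Phi_{H_{s,t}}$ are built from $0$-dimensional pieces, the only broken configurations that survive in the boundary have the index patterns displayed above, with the two dimensions summing to $1$. Then, exactly as in Lemma \ref{lemma:differential}, I count the boundary points of the compact $1$-manifold $\Bar{\mathcal{M}}(x_-,x_+,H_{s,t},J_{s,t})$ modulo $2$, which must vanish:
$$
\sum_{\mu(z)=\mu(x_+)} \#_2\Bar{\mathcal{M}}(x_-,z;H^-_t,J^-_t)\cdot\#_2\mathcal{M}(z,x_+,H_{s,t},J_{s,t}) \;+\; \sum_{\mu(y)=\mu(x_-)} \#_2\mathcal{M}(x_-,y,H_{s,t},J_{s,t})\cdot\#_2\Bar{\mathcal{M}}(y,x_+;H^+_t,J^+_t) = 0 .
$$
The first sum is the coefficient of $x_-$ in $d^-\Phi_{H_{s,t}}(x_+)$ and the second is the coefficient of $x_-$ in $\Phi_{H_{s,t}}d^+(x_+)$, where $d^{\pm}$ are the differentials on $CF_*(L_0,L_1;H^{\pm}_t,J^{\pm}_t)$. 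Since $x_{\pm}$ were arbitrary with $\mu(x_+)-\mu(x_-)=1$, this gives $d^-\Phi_{H_{s,t}}=\Phi_{H_{s,t}}d^+$, i.e.\ $\Phi_{H_{s,t}}$ is a chain map.

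The main obstacle is not the algebra but the geometric inputs for the $s$-dependent equation, and among these compactness is the delicate one: one has to rule out solutions escaping to infinity. This is precisely where the monotonicity hypothesis on the homotopy enters — $\partial_s\partial_r f_s\le 0$, realised by $a_-\ge a_+$ for linear ends, or more generally $h^-_t\ge h^+_t$ through Lemma \ref{lemma:no_escape_2} — so that the maximum principle of Proposition \ref{prop:maximum} confines all solutions to a fixed compact set; exactness of $\widehat M$ and of $L_0,L_1$ again excludes sphere and disk bubbling. Transversality is a Sard--Smale argument over the space of homotopies $J_{s,t}$ entirely parallel to \S\ref{sec:trans}, and the gluing of a Floer trajectory to a continuation solution repeats the analysis of \S\ref{sec:gluing} with one end governed by the $s$-dependent operator. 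Modulo these standard facts, the lemma follows.
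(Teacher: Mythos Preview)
Your argument is correct and is exactly the approach the paper indicates: the paper's proof merely says it is similar to Lemma~\ref{lemma:differential} and that one counts boundary points of $\mathcal{M}(x_-,x_+,H_{s,t},J_{s,t})$ for $\mu(x_+)-\mu(x_-)=1$, which is precisely what you spell out in detail.
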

\begin{proof}
	The proof is similar to the proof of Lemma \ref{lemma:differential}. Here we count the elements in the boundary of the moduli space $\mathcal{M}(x_-, x_+, H_{s,t}, J_{s,t})$ where $\mu(x_+) - \mu(x_-) = 1$.
\end{proof}

Another important property is that the chain homotopy class of $\Phi_{H_{s,t}}$ does not depend on the choice of the homotopy between $H^-_t$ and $H^{+}_t$. The idea is to consider a homotopy of homotopies. Counting the elements in the associated zero-dimensional moduli space defines a chain homotopy. For details see \cite[Lemma 3.12]{Sa97}. As an outcome, we have a well-defined map:
$$
\Phi_{H^{\pm}}:HF_*(H^{+}_t, J^+_t) \to HF_*(H^{-}_t, J^-_t).
$$

\begin{prop}\label{prop:comp_cont}
	Let $H^1, H^2, H^3$ be three admissible Hamiltonians, such that there exist continuation maps: 
	$$
	\begin{aligned}
	\Phi_{H^1,H^2}&: HF_*(H^1, J^1) \to HF_*(H^2, J^2),\\
	\Phi_{H^2,H^3}&: HF_*(H^2, J^2) \to HF_*(H^3, J^3).
	\end{aligned}
	$$
	Then there exist a continuation map $\Phi_{H^1,H^3}: HF_*(H^1, J^1) \to HF_*(H^3, J^3)$, and $\Phi_{H^1,H^3} =\Phi_{H^2,H^3} \circ \Phi_{H^1,H^2}$. 
\end{prop}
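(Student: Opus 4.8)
The plan is to realize one representative of $\Phi_{H^1,H^3}$ by a concatenated homotopy and then identify the resulting chain map with $\Phi_{H^2,H^3}\circ\Phi_{H^1,H^2}$ by a gluing argument parallel to \S\ref{sec:gluing}. First I would note that the existence of $\Phi_{H^1,H^2}$ and $\Phi_{H^2,H^3}$ forces the contact Hamiltonians at infinity to satisfy $h^1_t\le h^2_t$ and $h^2_t\le h^3_t$ pointwise on $\partial M$ (this is the monotonicity hypothesis under which the no-escape Lemma~\ref{lemma:no_escape_2} provides compactness for the continuation moduli spaces). By transitivity $h^1_t\le h^3_t$, so an admissible monotone homotopy from $H^1$ to $H^3$ exists and $\Phi_{H^1,H^3}$ is defined; since the map it induces on homology is independent of the admissible monotone homotopy used (the chain-homotopy invariance invoked after Lemma~\ref{lemma:continuation}), it is enough to compute $\Phi_{H^1,H^3}$ with one convenient choice.

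Next I would fix monotone homotopies $H^{12}_{s,t}$ (equal to $H^1_t$ for $s\ge s_0$ and to $H^2_t$ for $s\le -s_0$) and $H^{23}_{s,t}$ (equal to $H^2_t$ for $s\ge s_0$ and to $H^3_t$ for $s\le -s_0$), each of the form $f_s(r)$ with $\partial_s\partial_r f_s\le 0$ outside a compact set (or the contact-at-infinity analogue covered by Lemma~\ref{lemma:no_escape_2}), together with regular homotopies of almost complex structures. For $R\ge 2s_0$ I would form the glued homotopy
\[
H^R_{s,t}=\begin{cases} H^{12}_{s-R,t}, & s\ge R/2,\\ H^2_t, & -R/2\le s\le R/2,\\ H^{23}_{s+R,t}, & s\le -R/2,\end{cases}
\]
which is a smooth admissible monotone homotopy from $H^1$ at $s=+\infty$ to $H^3$ at $s=-\infty$: each piece is monotone and the flat middle band contributes $\partial_s\partial_r H^R=0$. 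Gluing the almost complex structures similarly to a regular $J^R_{s,t}$, the continuation map $\Phi_{H^R}$ represents $\Phi_{H^1,H^3}$ on homology for every large $R$.

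The core step is to show that for $R$ large $\Phi_{H^R}=\Phi_{H^2,H^3}\circ\Phi_{H^1,H^2}$ on the chain level. Fix $x_1\in\crit(\mathcal{A}_{H^1})$ and $x_3\in\crit(\mathcal{A}_{H^3})$ with $\mu(x_1)=\mu(x_3)$. For compactness, solutions $u_R\in\mathcal{M}(x_3,x_1;H^R,J^R)$ stay in a fixed compact subset of $\widehat M$ by the maximum principle uniformly in $R$, and by exactness no spheres or disks bubble; as $R\to\infty$, after the shifts $s\mapsto s\pm R$ one extracts $C^\infty_{\mathrm{loc}}$ limits solving the $H^{12}$- and $H^{23}$-continuation equations. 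On the middle band, where $H^R_{s,t}$ is $s$-independent and so $s$-translation invariant, a nonconstant limit would be a nontrivial $H^2$-Floer trajectory of unreduced index $\ge 1$, leaving negative virtual dimension for the two end pieces; hence the limit is a broken pair $u_{12}\in\mathcal{M}(x_2,x_1;H^{12})$, $u_{23}\in\mathcal{M}(x_3,x_2;H^{23})$ with $\mu(x_2)=\mu(x_1)$, and there is no extra breaking at $s=\pm\infty$ for the same reason. Conversely, pre-gluing $u_{12}$ and $u_{23}$ across the band of length $R$ through $x_2$ and running the implicit function theorem with the $R$-uniform surjectivity of the linearization $D_{u_{12}\#_R u_{23}}$, exactly as in the proof of Theorem~\ref{thm:gluing}, produces for large $R$ a unique nearby genuine solution, and the two constructions are mutually inverse. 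This yields, for $R$ large, a bijection of finite sets $\bigsqcup_{x_2}\mathcal{M}(x_2,x_1;H^{12})\times\mathcal{M}(x_3,x_2;H^{23})\xrightarrow{\ \sim\ }\mathcal{M}(x_3,x_1;H^R)$. Counting mod $2$, the $(x_3,x_1)$-matrix coefficient of $\Phi_{H^R}$ equals $\sum_{x_2}\#_2\mathcal{M}(x_2,x_1;H^{12})\cdot\#_2\mathcal{M}(x_3,x_2;H^{23})$, i.e.\ that of $\Phi_{H^2,H^3}\circ\Phi_{H^1,H^2}$; passing to homology gives the identity.

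The hard part will be the gluing and compactness package for the concatenated homotopy: building the pre-glued map, proving the $R$-uniform surjectivity estimate for its linearization and carrying out the implicit-function-theorem step, and establishing the matching degeneration statement as $R\to\infty$ --- including the $R$-uniform maximum principle of Proposition~\ref{prop:maximum}/Lemma~\ref{lemma:no_escape_2} --- so that no breaking occurs except through $\crit(\mathcal{A}_{H^2})$. These arguments are routine but lengthy, being essentially a rerun of \S\ref{sec:gluing} and \S\ref{sec:comp} with the continuation equation in place of Floer's equation.
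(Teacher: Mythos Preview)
Your argument is correct. The paper does not give a detailed proof of this proposition; it only remarks that ``the proof is similar to the proof that the chain homotopy class of $\Phi_{H_{s,t}}$ does not depend on the choice of homotopy $H_{s,t}$,'' i.e.\ a homotopy-of-homotopies argument producing a chain homotopy between a direct $H^1$-to-$H^3$ continuation map and one built from the concatenated data.

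Your route is slightly different in emphasis: rather than extracting a chain homotopy from a parametrized moduli space over $R\in[R_0,\infty]$, you fix $R$ large and use gluing (as in \S\ref{sec:gluing}) to establish a bijection of index-zero moduli spaces, yielding the identity $\Phi_{H^R}=\Phi_{H^2,H^3}\circ\Phi_{H^1,H^2}$ already on the chain level. This is the standard alternative packaging; it is more constructive and gives a sharper chain-level statement, at the cost of invoking the full implicit-function-theorem step, whereas the paper's hint only needs compactness of a one-dimensional parametrized moduli space together with the already-established independence of homotopy. Either approach is fine here, and the no-escape input (Lemma~\ref{lemma:no_escape_2}) you identify is exactly what is needed to make both go through in the non-compact setting.
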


The proof is similar to the proof that the chain homotopy class of $\Phi_{H_{s,t}}$ does not depend on the choice of homotopy $H_{s,t}$. A consequence of this Proposition is that the Floer homology group $HF_*(H, J)$ does not depend on the choice of an almost complex structure; however, it depends heavily on $H$, as we will see and exploit in the following chapters. This is not the case for the Floer homology of closed Lagrangians. In this case, the Floer homology does not depend on the choice of Hamiltonian as well.

The proof that $HF_*(H, J)$ does not depend on $J$ relies on the fact that in the case of the constant homotopies $H_{s,t} = H_t$ and $J_{s,t} = J_t$, the only elements are stationary solutions. This leads to the identity map on $HF_*(H, J)$. For this, we need the automatic transversality for stationary solutions $u(s,t) = x(t)$, for $x \in \mathrm{Crit}{\mathcal{A}_H}$. Additionally, we need that no non-constant solution can have the same asymptotic $x$. This is true because of the regularity of $J$. For the automatic transversality, see \cite[Propositon 3.7]{ASch06}

\subsubsection{Direct limit}
Morally speaking, the wrapped Floer homology is the Floer homology of a Hamiltonian that has an infinite slope. Here we will formalize this limiting procedure.

Let $I$ be an index set with a partial order $\preceq$. Let $\{V_{\alpha}\}_{\alpha \in I}$ be a collection of vector spaces over a field $\mathbb{K}$, together with homomorphism $h_{\alpha \beta}: V_{\alpha} \to V_\beta$ whenever $\alpha \preceq \beta$, such that 
$h_{\alpha \gamma} = h_{\beta \gamma} \circ h_{\alpha \beta}$ if $\alpha \preceq \beta \preceq \gamma$. 

\begin{defn}
	The direct limit of $\{V_{\alpha}, h_{\alpha \beta}\}$ is
	$$
	\lim_{\substack{\longrightarrow \\ I}} V_\alpha = \bigoplus V_\alpha / H,
	$$
	where $H$ is the subspace of $\bigoplus V_\alpha$ spanned by $i_{\beta} h_{\alpha \beta} (v_{\alpha})-i_{\alpha} v_{\alpha}$, for $\alpha \preceq \beta$ and $v_{\alpha} \in V_{\alpha}$ and $i_{\alpha}: V_{\alpha} \to \bigoplus V_\alpha$ is an obvious inclusion.
\end{defn} 

One way to formulate it is that two elements $a \in V_{\alpha}$ and $b \in V_{\beta}$ are equal in the direct limit if there is $\gamma$ such that $\alpha \preceq \gamma$, $\beta \preceq \gamma$ and $h_{\alpha \gamma} (a) = h_{\beta \gamma} (b)$. Note that from the definition there is the induced map $i_\alpha:V_{\alpha} \to 	\lim\limits_{\substack{\longrightarrow \\ I}} V_\alpha$.

A different perspective is to consider $(I, \preceq)$ as a category whose objects are elements of $I$, and the hom-sets $\mathrm{hom}(\alpha, \beta)$ consist of single points if $\alpha \preceq \beta$ and are the empty set otherwise. Then, the collection $\{V_{\alpha}, h_{\alpha \beta}\}$ is equivalent to a functor $V: I \to \mathrm{Vec}_{\mathbb{K}},$ from $I$ to the category of vector spaces $\mathrm{Vec}_{\mathbb{K}}$. Then, the direct limit is the colimit of the functor $V$.

\begin{ex}
	Consider $(\N, \leq)$ as the index set, and let $V_n = \Z_2$. Set $h_{nm} :V_n \to V_m$ to be the zero homomorphism, whenever $n \leq m$, then the direct limit is
	$$
	\lim_{\substack{\longrightarrow}} V_n = \bigoplus_{n \in \N} \Z_2.
	$$
\end{ex}

One important property of the direct limit is that it is an exact functor, meaning that it preserves short exact sequences. For more details on the direct limits of groups see \cite[\S 5]{Ro09}.

\begin{rem}
	Our chain complexes and their homologies are vector spaces over $\Z_2$. In terms of the definition of the direct limit, one can work in the category of groups (or modules) with essentially the same definition.
\end{rem}

A subset $J \subset I$ is called \textit{cofinal} if for every $\alpha \in I$, there is $\beta \in J$ such that $\alpha \preceq \beta$. One can show that:
$$
\lim_{\substack{\longrightarrow \\ I}} V_{\alpha} \cong \lim_{\substack{\longrightarrow \\ J}} V_{\alpha}.
$$

\subsubsection{Total wrapped Floer homology group}

The index set in our context is the set of all admissible Hamiltonians $\mathcal{H}$ for $L_0$ and $L_1$. For $H^{\pm}_t \in \mathcal{H}$ let $h^{\pm}_t : \partial M \to \R$ be corresponding contact Hamiltonians. Introduce the partial order $\preceq$ on $\mathcal{H}$ by
$$
H^+_t \preceq H^{-}_t \Leftrightarrow h^{+}_t(x) \leq h^{-}_t (x) \hspace{2mm} \forall x \in \partial M.
$$

\begin{defn}
	The wrapped Floer homology of the pair $L_0, L_1$ in $M$ is 
	$$
	HW_*(L_0, L_1; M) = \lim_{\substack{\longrightarrow \\ \mathcal{H}}} HF_*(L_0, L_1; H).
	$$
\end{defn}

If a Hamiltonian $H(x,r) = ar + b$ is linear at infinity is non-degenerate then $a$ is not a period of a Reeb chord from $\Lambda_0$ to $\Lambda_1$ with respect to the contact form $\alpha = \lambda \vert_{\partial M}$. Choosing an increasing, unbounded sequence $a_n \in \R$ such that $a_n$ is not in the \textit{spectrum} of $\alpha$:
$$\mathcal{A}(\Lambda_0, \Lambda_1, \alpha) := \left\{ \int \gamma^* \alpha \mid \gamma \text{ is a Reeb chord from } \Lambda_0 \text{ to } \Lambda_1 \right\},$$
 and picking non-degenerate Hamiltonians $H_n$ with slope $a_n$ we have that
 $$
 HW_*(L_0, L_1; M) \cong  \lim_{\longrightarrow } HF_*(L_0, L_1; H_n),
 $$
since $H_n$ is a cofinal family. If $L = L_0 = L_1$ we write $HW_*(L; M)$

\begin{ex}
Let $M = \R^{2n} \cong \C^n$ and $L = \R^n \times \{0\}$ take $H_{k}(z) = \frac{2k + 1}{4} \pi |z|^2$.	The calculation from Example \ref{ex:index} implies that the unique chord $x_k(t)=0$ is non-degenerate and has index $\mu (x_k) = nk$ so we have

$$
HF_*(\R^n, H_k) = \begin{cases}
	\Z_2, &*=nk\\
	0 &*\neq nk.
\end{cases}
$$
In fixed degree $*$ all groups $HF_*(\R^n, H_k)$ are zero for $k$ large enough, hence 
$$
HW_*(\R^n; \R^{2n}) = 0.
$$
\end{ex}
	
\subsubsection{Filtered wrapped Floer homology}
By Lemma \ref{lemma:energy-difference}, we have that the differential drops the action. Hence, the vector space $CF_*^{\leq a}(H)$ generated by the Hamiltonian chords $x$ with action $\mathcal{A}_H (x) \leq a$ is a chain subcomplex. Also, for $a < b$ we have that $CF_*^{\leq a}$ is a subcomplex of $CF_*^{\leq b}$. Denote by $CF_*^{(a, b]}$ the quotient complex $CF_*^{\leq a} / CF_*^{\leq b}$. In case that $b = +\infty$ we write $CF_*^{>a}$ for the quotient complex $CF_* / CF_*^{\leq a}$.

Since we have a short exact sequence of chain complexes:
$$
0 \to CF_*^{\leq a}(H) \to CF_*^{\leq b}(H) \to CF_*^{(a, b]}(H),
$$
this leads to the long exact sequence of homology groups:
$$
\cdots \to HF_*^{\leq a}(H) \to HF_*^{\leq b}(H) \to HF_*^{(a, b]}(H) \to HF_{*-1}^{\leq a}(H) \to \cdots.
$$
For solutions of the $s$-dependent Floer's equation, we have the following energy estimate
$$
\begin{aligned}
	0 \leq E(u) &= \int \omega (\partial_s u, \partial_t u - X_{H_s}) ds dt \\
	     &= \int u^* \omega - \int dH_s (\partial_s u) ds dt \\
	     &= \int u^* \omega - \int \partial_s (H_s(u(s,t)) ds dt + \int \partial_s H_s (u(s,t)) ds dt\\
	     &= \mathcal{A}_{H_+}(x_+) - \mathcal{A}_{H_-}(x_-) + \int \partial_s H_s (u(s,t)) ds dt.
\end{aligned}
$$
If $H_- \geq H_+$ everywhere, one can choose a homotopy $H_s$ from $H_-$ to $H_+$ such that $\partial_s H_s$ holds for all $x \in M$. Hence, the continuation map $\Phi_{H_s}$ induces a map of subcomplex $\Phi_{H_s}:CF_*^{\leq a}(H_+) \to CF_*^{\leq a}(H_-)$. Consequently we also have that $\Phi_{H_s}:CF_*^{(a,b]}(H_+) \to CF_*^{(a,b]}(H_-)$. The idea is to pass to the direct limit over a monotone cofinal family of Hamiltonians $\{H_i\}_{i \in \N}$.

The direct limit is an exact functor, meaning that if we have collections $\{A_{\alpha}\}_{\alpha \in I}$, $\{B_{\alpha}\}_{\alpha \in I}$, and $\{C_{\alpha}\}_{\alpha \in I}$ and collection of the short exact sequences:
$$
0 \to A_\alpha \overset{f_{\alpha}}{\to} B_{\alpha} \overset{g_{\alpha}}{\to} C_{\alpha} \to 0,
$$
then there are maps $f:= \lim\limits_{\longrightarrow} f_{\alpha}$ and $g:= \lim\limits_{\longrightarrow} g_{\alpha}$ such that:
$$
0 \to \lim\limits_{\longrightarrow} A_\alpha \overset{f}{\to} \lim\limits_{\longrightarrow} B_{\alpha} \overset{g}{\to} \lim\limits_{\longrightarrow} C_{\alpha} \to 0.
$$
The proof of this fact can be found in \cite[Proposition 5.33]{Ro09}. As an outcome, we get a long exact sequence:

	\begin{tikzcd}
		\cdots \arrow[r] & HW_*^{\leq a}(L_0, L_1; M) \arrow[r] &  HW_*^{\leq b}(L_0,L_1;M)\ar[overlay, out=0, in=180, looseness=2]{dl} \\
		 & HW_*^{(a, b]}(L_0, L_1;M) \arrow[r] & HW_{*-1}^{\leq a}(L_0, L_1; M)\arrow[r] & \cdots .\\
	\end{tikzcd}
	
The groups involved heavily depend on the choice of a monotone cofinal sequence $\{H_i\}_{i \in \N}$.

\section{Viterbo's transfer morphism}\label{sec:transf_morph}

In this section, we define the restriction map: $$\pi(M,W):HW_*(L_0, L_1; M) \to HW_*(\Bar{L}_0, \Bar{L}_1; W),$$ where $W \subset M$ is a codimension $0$ submanifold, whose boundary $\partial W$ is transverse to the Liouville vector field $X$ on $M$, and $\Bar{L}_i = L_i \cap W$. For example see Figure \ref{figure:subdomain}. The restriction map was introduced in the case of Symplectic (co)homology by Viterbo in \cite{Vi99}. In the wrapped setting, it was shown in \cite{AS10} that the restriction map also respects the $A_{\infty}$ algebra structure of the wrapped complex. One of the difficulties in showing that Viterbo's transfer morphisms respect higher products is finding the chain model for wrapped Floer (co)homology. In \cite[\S 3.7]{AS10}, they resolve this with a telescope construction.

\begin{figure}[h!]
	\centering
	\begin{tikzpicture}[scale = 0.7]
		\draw[black!40!red] (-1.4, 2.9) to [out = -75, in = 90] (-0.2, 0.85); 
		\draw[dashed, black!40!red](1.4, 3.1) to [out = -105, in = 90] (0.2, 1.15); 
		\draw[red]  (-0.2, 0.85) to [out = -90, in = 150] (0,-0.3);
		\draw[dashed, red] (0.2, 1.15) to [out = -90, in = 30] (0, -0.3); 
		\draw (-2,3) to[out = -75, in = 90] (-1,1);
		 \draw[blue] (-1,1) to[out=-90, in = 90] (-2,-1) to [out = 270, in = 180](0,-3);
		\draw (2,3) to[out = -105, in = 90] (1,1);
		\draw[blue] (1,1) to[out=-90, in = 90] (2,-1) to [out = 270, in = 0](0,-3);
		\draw (-0.8,3) .. controls (-0.5,1.5) and (0.5, 1.5) .. (0.8,3);
		\draw[dashed, blue] (-1,1) to [out = 15, in = 180] (0.2,1.15) to [out = 0, in = 165] (1,1);
		\draw[blue] (-1,1) to [out = -15, in = 180] (-0.2,0.85) to [out = 0, in = -165] (1,1); 
		\draw (-2,3) to [out = 15, in = 180] (-1.4, 3.1) to [out = 0, in = 165] (-0.8, 3);
		\draw (-2,3) to [out = -15, in = 180] (-1.4, 2.9) to [out = 0, in = -165] (-0.8, 3);
		\draw (0.8,3) to [out = 15, in = 180] (1.4, 3.1) to [out = 0, in = 165] (2, 3);
		\draw (0.8,3) to [out = -15, in = 180] (1.4, 2.9) to [out = 0, in = -165] (2, 3);
		\draw[blue] (-0.07, -0.23) to [out= -45, in = 135] (0,-0.3) to [out= -45, in = 45] (0,-1.7) to [out= -135, in=45] (-0.07, -1.77);
		\draw[blue] (0,-0.3) to [out= -135, in = 135] (0,-1.7);
		\draw[blue] node at (-1,-1) {$W$};
		\draw node at (2,1.5) {$M$};
		\draw[black!40!red] node at (-2, 1.5) {$L$};
		\draw[red] node at (0.7,0) {$\Bar{L}$};
	\end{tikzpicture}
	\caption{A Liouville subdomain $W \subset M$, and a Lagrangian $\Bar{L} = L \cap W$.}
	\label{figure:subdomain}
\end{figure}
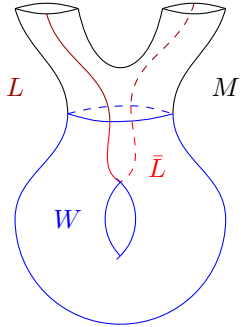
The following integrated maximum principle will be used in the definition of Viterbo's transfer map. This was introduced in \cite{AS10} for the wrapped Floer (co)homology and adapted to the setting of symplectic (co)homology in \cite{Ri13}. 

Consider two Hamiltonians $H_{\pm}$ that are contact at infinity, and assume that the corresponding contact Hamiltonians satisfy $h_-(x) \geq h_+(x) > 0$ for all $x \in \partial M$. Then, there exists a homotopy of Hamiltonians $H_s$ that satisfies 
\begin{itemize}
	\item $H_s(x,r) = H_{\pm}$ for $ \pm s \geq s_0$,
	\item $H_s(x,r) = h_s(x) r + b_s$ for $r \geq 1$,
	\item $\partial_s h_s (x) \leq 0$.
\end{itemize}
For each $s$ the positive contact Hamiltonian $h_s$ determines a new radial coordinate $r_s (p) = h_s(\pi_1(p)) r(p) $, where $\pi_1: \partial M \times (0, \infty) \to \partial M$ is the projection on the first coordinate. We will choose a homotopy of almost complex structures $J_s$ that satisfy
$\lambda \circ J = d r_s$. For $r_0 \geq \max h_s(x)$ set 
$$
Y_s = \left\{ \left(x, \frac{r_0}{h_s(x)}\right) \mid x \in \partial M \right\}.
$$ 
Let $u: \R \times [0,1] \to M$ be a solution of the continuation equation \ref{eq:continuation}, where $H_s$ and $J_s$ are as described in the previous paragraph. Let $S \subset \R \times [0,1]$ be a subdomain.
\begin{lemma}\label{lemma:no_escape_2}
If $u: S \to M$ satisfies $u(s,t) \in Y_s$ for every $(s,t) \in \partial S$, and $r_s \circ u(s,t) \geq r_0$ for all $(s,t) \in S$, then
$$
u(S) \subset \bigcup_{s \in \R} Y_s.
$$
\end{lemma}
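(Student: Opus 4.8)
The plan is to adapt the integrated maximum principle argument of \cite{AS10, Ri13} to the $s$-dependent setting. The key idea is to integrate the pullback $u^*\omega$ over a suitable subdomain of $S$ and show that the hypotheses force this integral to be non-positive, while on the other hand $u^*\omega \ge 0$ pointwise up to the Hamiltonian correction term, so the integrand must vanish and $u$ must be confined to $\bigcup_s Y_s$. More precisely, I would first argue by contradiction: suppose $\rho(s,t) := r_s\circ u(s,t)$ exceeds $r_0$ somewhere in the interior of $S$. Consider the open set $U = \{(s,t)\in S \mid \rho(s,t) > r_0\}$; by hypothesis $\partial U$ lies in $\partial S \cup \rho^{-1}(r_0)$, and on all of $\partial U$ one has $u \in \bigcup_s Y_s$, i.e.\ $\rho = r_0$ there (using that on $\partial S$ the point $u(s,t)\in Y_s$ and on the rest $\rho = r_0$ by definition of $U$). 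Then I would compute $\int_U u^*\omega$ via Stokes using the primitive, being careful to track the $s$-dependence of the contact form and of the coordinate $r_s$.

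The main computation: on the region $r\ge 1$ we have $\widehat\lambda = r\alpha$, and the hypersurface $Y_s$ is the level set $\{r_s = r_0\}$ where $r_s = h_s(x) r$; its induced contact form is (a rescaling of) $r_0\,\alpha/h_s$. Writing $\widehat\lambda_s := r_s\,\alpha/h_s \cdot h_s = $ — more usefully, I would use that along $\partial U$ the restriction of $\widehat\lambda$ to $TY_s$ is controlled, and combine Floer's equation $\partial_s u + J_s(\partial_t u - X_{H_s}) = 0$ with $\lambda\circ J_s = dr_s$ to express $\partial_t(\rho) = dr_s(\partial_t u) = \lambda(J_s\partial_t u)$ and similarly $\partial_s\rho$ along the boundary. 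The standard trick is to note that
$$
\int_U u^*\omega = \int_U |\partial_s u|_{J_s}^2\,ds\,dt + \int_U dH_s(\partial_s u)\,ds\,dt
$$
up to the explicit $\partial_s(r_s\text{-coordinate})$ correction, and then Stokes' theorem together with $u(\partial U)\subset \bigcup Y_s$ and the sign condition $\partial_s h_s\le 0$ (which makes the $s$-derivative term have the favorable sign, exactly as in the computation for the continuation map preceding Lemma~\ref{lemma:no_escape_2}) forces $\int_U u^*\omega \le 0$. Since the integrand is a sum of $|\partial_s u|^2_{J_s}\ge 0$ and terms that vanish on $Y_s$, one deduces $\partial_s u \equiv 0$ on $U$ and hence $u$ is $s$-independent there, which combined with the asymptotics and the boundary behavior yields $u(U)\subset\bigcup_s Y_s$, contradicting that $\rho > r_0$ strictly on $U$ unless $U = \emptyset$.

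The hard part will be handling the $s$-dependence cleanly: in the non-parametrized integrated maximum principle one works with a fixed hypersurface $Y$, whereas here $Y_s$ moves, so the naive Stokes argument picks up an extra term involving $\partial_s r_s$, and one must check that the monotonicity assumption $\partial_s h_s\le 0$ is precisely what controls its sign. Concretely, I expect the delicate point is to show that the $1$-form one integrates over $\partial U$ — which should be $u^*(\widehat\lambda - H_s\,dt)$ or a variant adapted to the moving coordinate — restricts to something non-positive on $TY_s$ at each $s$, using $\lambda\circ J_s = dr_s$ and $X_{H_s} = (X_{h_s}, -r\,dh_s(R_\alpha))$ from \eqref{eq:contact_at_inf}; this is where the computation genuinely differs from \cite{Ri13} and where I would follow \cite{Fa20} most closely. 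The rest — the contradiction setup, the pointwise positivity of $|\partial_s u|^2_{J_s}$, and concluding $u(S)\subset\bigcup_s Y_s$ — is routine once the boundary term is pinned down.
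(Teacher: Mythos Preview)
Your approach is essentially the same as the paper's: an integrated maximum principle showing $E(u)\le 0$ via Stokes' theorem, using $\partial_s h_s\le 0$ and $\lambda\circ J_s = dr_s$, then concluding $\partial_s u\equiv 0$. The paper, however, works directly on $S$ rather than on the auxiliary open set $U=\{\rho>r_0\}$: the hypothesis already gives $u(\partial S)\subset\bigcup_s Y_s$, so the contradiction wrapper is superfluous, and introducing $U$ only creates extra work in tracking which pieces of $\partial U$ lie on the Lagrangian boundary of the strip versus on $\rho^{-1}(r_0)$.

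Two concrete steps you gloss over but which do the real work in the paper's argument: first, the boundary integrand $u^*\lambda - H_s(u)\,dt$ is rewritten as $\lambda\circ(du - X_{H_s}\otimes dt)$ using that $\lambda(X_{H_s})=r_0=H_s(u)-b_s$ along $Y_s$ (this is the ``variant adapted to the moving coordinate'' you allude to, and it is an exact identity, not an estimate); second, the boundary $\partial S$ is split into a horizontal part $\partial_h S$ on the strip boundary, where both $\lambda$ and $dt$ vanish by the Lagrangian condition, and a vertical part $\partial_v S$ in the interior of the strip, where the Floer equation in the form $(du-X_{H_s}\otimes dt)\circ i = J_s\circ(du-X_{H_s}\otimes dt)$ together with $\lambda\circ J_s=dr_s$ and $dr_s(X_{H_s})=0$ reduces the integrand to $-d(r_s\circ u)(n)$ for the outward normal $n$, which is $\le 0$ since $r_s\circ u\ge r_0$ on $S$ with equality on $\partial_v S$. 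You correctly anticipate that $\partial_s h_s\le 0$ controls the bulk correction term, but it is this outward-normal argument on $\partial_v S$, not a sign property of the restriction of a $1$-form to $TY_s$, that closes the estimate.
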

\begin{proof}
	The energy of $u$ restricted to $S$ is given by
	$$
	\begin{aligned}
		0 \leq E(u) &= \int_S \omega(\partial_s u, J_s \partial_s u) ds dt = \int_S d \lambda(\partial_s u, \partial_t u - X_{H_s}) ds dt\\
		&= \int_S u^* d \lambda - \int_S dH_s(\partial_s u) ds dt \\
		& = \int_{\partial S} u^* \lambda - \int_S \partial_s (H_s (u)) ds dt + \int_S (\partial_s H_s) (u) ds dt \\
		&= \int_{\partial S} (u^* \lambda - H_s(u) dt) + \int_S (\partial_s H_s) (u) ds dt .
	\end{aligned}
	$$
	The equality in the third row follows from the partial integration, and the last equality uses Stoke's theorem. Note that $H_s(u(s,t)) = r_0 + b_s$ for $(s,t) \in \partial S$. Now we will calculate $\lambda(X_{H_s})$ along $Y_s$. Recall that $\lambda = r \alpha$ in the symplectization, and that $X_{H_s} = (X_{h_s}, - r dh_s(R_{\alpha}))$ (see Equation (\ref{eq:contact_at_inf})). Hence, on $Y_s$ following holds:
	$$
	\lambda(X_{H_s}) = \frac{r_0}{h_s(x)} \alpha(X_{h_s}) = \frac{r_0}{h_s(x)} h_s(x) = r_0. 
	$$
	Here, we have used that the radial coordinate $r$ restricted to $Y_s$ is given by $r_0 / h_s(x)$ and that for a contact Hamiltonian $h$, the equation that determines the Reeb component of the contact vector field $X_h$ is $\alpha(X_h) = h$. This gives us
	$$
	\begin{aligned}
		E(u)&= \int_{\partial S} (u^* \lambda - H_s(u) dt) + \int_S (\partial_s H_s) (u) ds dt\\
		&= \int_{\partial S} (u^* \lambda - \lambda(X_{H_s}) + \lambda (X_{H_s})- H_s(u) dt) + \int_S (\partial_s H_s) (u) ds dt\\
		&= \int_{\partial S} \lambda \circ (du - X_{H_s} \otimes dt) + \int_{\partial S} -b_s dt + \int_S (\partial_s H_s) (u) ds dt\\
		&= \int_{\partial S} \lambda \circ (du - X_{H_s} \otimes dt) + \int_S (\partial_s h_s) (\pi_1 \circ u) ds dt.\\
		& \leq \int_{\partial S} \lambda \circ (du - X_{H_s} \otimes dt).
	\end{aligned}
	$$
	Here we have used our calculation of $\lambda(X_{H_s})$, the Stoke's theorem for the $1$-form $-b_s dt$ and the assumption $\partial_s h_s \leq 0$.
	
	Now consider the splitting $\partial S = \partial_h S \cup \partial_v S$, where $\partial_h S$ is contained in the boundary of the strip, and $\partial_v S$ is the part of $\partial S$ in the interior of the strip. Both $dt$ and $\lambda$ vanish on $\partial_h S$ hence we get:
	$$
	E(u) \leq \int_{\partial_v S} \lambda \circ (du - X_{H_s} \otimes dt).
	$$
	The continuation equation is equivalent to: $$(du - X_{H_s} \otimes dt) \circ i = J_s \circ (du - X_{H_s} \otimes dt).$$ Together with $\lambda \circ J_s = d r_s$, this implies:
	$$
	E(u) \leq \int_{\partial_v S} -d r_s \circ (du - X_{H_s} \otimes dt) \circ i.
	$$
	Since $d r_s (X_{H_s}) = 0$ we get:
	$$
	E(u) \leq \int_{\partial_v S} -d r_s \circ du \circ i. 
	$$  
	If $n$ is the unit outward normal to $\partial_v S$, then the orientation of $\partial_v S$ is given by $i \cdot n$. Hence, we get that along $\partial_v S$ we are integrating $ d r_s ( du(n))$. Since $n$ points outward, this quantity is non-positive (since $r_s \circ u$ increases in the inward direction). So we get $E(u) \leq 0$. This implies $\partial_s u = 0$ along $S$, which together with $u(\partial S) \subset \bigcup Y_s$ implies $u(S) \subset \bigcup Y_s$.
\end{proof}

The Viterbo transfer morphism is defined as a composition of two maps. We will define a cofinal $H_n$ sequence on $M$ such that $H_n \vert_{W} <0$, and the quotient group $HW_*^{>0}(L_0, L_1; M)$ is isomorphic to $ HW_*(\Bar{L}_0, \Bar{L}_1; W).$ Then, $\pi(M,W)$ is the composition of the projection $\pi_*: HW_*(L_0,L_1;M) \to HW_*^{>0}(L_0, L_1; M),$ and the isomorphism $HW_*^{>0}(L_0, L_1; M) \cong HW_*(\Bar{L}_0, \Bar{L}_1; W)$. We follow \cite[\S 2.8]{Fa20} who was using the construction from \cite{Mc08}. Without loss of generality, we assume that $L_0$ and $L_1$ intersect transversally. This means that in the regions where we make our Hamiltonians constant, the only Hamiltonian chords are intersections between the two Lagrangians.

\begin{prop}
	There exists a cofinal sequence $H_n$ and a sequence of decreasing homotopies $H_{n,n+1}$ between them such that
	\begin{itemize}
		\item[(1)] $H_n \vert_{W}$, $H_{n,n+1} \vert_{W}$ are admissible Hamiltonians and homotopies for defining $HW_*(\Bar{L}_0, \Bar{L}_1; W)$,
		\item[(2)] all $1$-chords of $X_{H_n}$ from $L_0$ to $L_1$ in $W$ have positive action, and all the chords in $\widehat{M} \setminus W$ have negative action,
		\item[(3)] all solutions to the Floer (resp. continuation) equation of $H_n$ (resp. $H_{n,n+1}$) connecting $1$-chords in $W$ are entirely contained in $W$ for all admissible $J$ that are of contact type near $\partial W$.
	\end{itemize}
\end{prop}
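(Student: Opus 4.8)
The plan is to make the cofinal family $H_n$ completely explicit as a radially symmetric ``staircase'' Hamiltonian written in the Liouville coordinates adapted to $\partial W$ and to $\partial M$, following \cite{Mc08} and \cite[\S 2.8]{Fa20}, and then to read off (1)--(3) from the action identity for chords of such Hamiltonians together with the maximum principles already available (Proposition \ref{prop:maximum} and the integrated form in Lemma \ref{lemma:no_escape_2}). First I would fix a collar $\partial W\times(0,\infty)\hookrightarrow\widehat M$ produced by the Liouville flow of $\widehat M$ emanating from $\partial W$, on which $\widehat\lambda=\rho\,\alpha_W$ with $\alpha_W=\lambda|_{\partial W}$ and $\partial W=\{\rho=1\}$, and the end $\partial M\times[1,\infty)$ with coordinate $r$; these two coordinates do \emph{not} agree, which is why a transition region will be needed. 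After a compactly supported perturbation I may assume $L_0\pitchfork L_1$, that $L_0,L_1$ are cylindrical, and that the chord spectra $\mathcal{A}(\Lambda_0,\Lambda_1,\alpha)$ and $\mathcal{A}(\Lambda_0^W,\Lambda_1^W,\alpha_W)$ (for $\Lambda_i=L_i\cap\partial M$, $\Lambda_i^W=L_i\cap\partial W$) are closed and nowhere dense, so slopes can be chosen in their complements; on the part of $W$ where $H_n$ will be autonomous I perturb it to a $C^2$-small negative Morse function whose critical points are exactly the perturbed points of $L_0\cap L_1$ lying there.

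The radial profile $f_n$ is assembled, reading outward from $\mathrm{Core}(W)$, from: (i) a $C^2$-small negative piece $\equiv-\epsilon_n$, Morse-perturbed, on $W$ away from a thin collar of $\partial W$; (ii) a convex ramp raising the slope from $0$ to $a_n$ inside that collar; (iii) a linear piece of slope $a_n$, with $a_n\uparrow\infty$ chosen off $\mathcal{A}(\Lambda_0^W,\Lambda_1^W,\alpha_W)$, placed so as to straddle $\partial W$ and to have strictly positive $\rho$-intercept $b_n$; (iv) a concave ramp lowering the slope back to $0$ just outside $W$; (v) a large positive plateau of height $C_n$ with $C_n\gg a_n$ filling $\widehat M\setminus W$ up to the end; (vi) near $\partial M$ a further concave/linear step rising with slope $A_n\uparrow\infty$ chosen off $\mathcal{A}(\Lambda_0,\Lambda_1,\alpha)$, so that $H_n$ is linear at infinity in the sense of Definition \ref{def:ham_linear_at_inf}. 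The homotopies $H_{n,n+1}$ are obtained by interpolating the two profiles monotonically in each region, decreasing the contact Hamiltonian pointwise; the constants $\epsilon_n,b_n,C_n,A_n$ are tuned exactly as in \cite{Mc08,Fa20}.

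Point (1) is then immediate: $H_n|_W$ completes to a Hamiltonian linear at infinity of slope $a_n\notin\mathcal{A}(\Lambda_0^W,\Lambda_1^W,\alpha_W)$, $H_{n,n+1}|_W$ is a monotone decreasing homotopy between consecutive ones, and $\{a_n\}$ is cofinal, so $\{H_n|_W\}$ computes $HW_*(\bar L_0,\bar L_1;W)$. For (2), a chord $x$ of $H_n=f_n(\rho)$ at level $\rho_*$ satisfies $\mathcal{A}_{H_n}(x)=\rho_*f_n'(\rho_*)-f_n(\rho_*)$, i.e.\ minus the $y$-intercept $b(\rho_*):=f_n(\rho_*)-\rho_*f_n'(\rho_*)$ of the tangent line to the graph of $f_n$; the linear pieces and the plateaus carry no chords, since there the slope is off the relevant spectrum (resp.\ $X_{H_n}=0$ and $L_0\cap L_1$ has been pushed into the autonomous region). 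On the convex ramp inside $W$ one has $b'=-\rho f_n''\le0$, hence $b\le b(\mathrm{start})=-\epsilon_n<0$, so those chords, together with the Morse critical points, lie in $W$ and have action $\ge\epsilon_n>0$; on the concave ramp just outside $W$ one has $b'=-\rho f_n''\ge0$, hence $b\ge b_n>0$, and near $\partial M$ similarly $b\ge C_n-O(A_n)>0$, so those chords lie in $\widehat M\setminus W$ and have negative action. Thus $CF^{>0}(L_0,L_1;H_n)$ is spanned precisely by the chords contained in $W$. For (3), let $u$ solve the Floer equation for $H_n$ (resp.\ the continuation equation for $H_{n,n+1}$) with asymptotics $x_\pm$ in $W$, i.e.\ at a level $\rho<\rho_0$ below the linear collar; on any connected component $S$ of $u^{-1}(\{\rho\ge\rho_0\})$ one has $u(\partial S)\subset\{\rho=\rho_0\}$ and $\rho\circ u\ge\rho_0$ on $S$, so Lemma \ref{lemma:no_escape_2} with $Y_s=\{\rho=\rho_0\}$ (on which $H_n$ is contact at infinity for $(\partial W,\alpha_W)$ and the homotopy is monotone), respectively Proposition \ref{prop:maximum} for the stationary homotopy, forces $\partial_s u\equiv0$ on $S$; by unique continuation $u$ would then coincide with an asymptotic chord, contradicting $\rho\circ u\ge\rho_0$. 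Hence $u(\R\times[0,1])\subset W$, which also identifies the induced differential and continuation maps on $CF^{>0}$ with those of $CF(\bar L_0,\bar L_1;H_n|_W)$.

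The genuinely delicate step is the bookkeeping inside the construction: one must keep $b(\rho_*)$ on the correct side of $0$ at \emph{every} level where a Reeb chord period of $\alpha_W$ or $\alpha$ is attained, while the slopes $a_n,A_n$ run off to infinity. Since $b$ can only increase where $f_n$ is concave (and the slope is decreasing), and a naive linear-then-plateau profile makes $b$ negative just outside $\partial W$ — producing positive-action chords in $\widehat M\setminus W$ — this forces the high plateau $C_n\gg a_n$ between $W$ and the end, forces the linear collar of slope $a_n$ to straddle $\partial W$ with positive intercept rather than to pass through the $\partial W$-level (so that $H_n$ is in fact positive on a thin collar just inside $\partial W$, which is why a literal ``$H_n|_W<0$'' cannot be maintained), and pins down the admissible ranges of $\epsilon_n,b_n,C_n,A_n$. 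This is exactly the point treated too cavalierly in \cite[Lemma 2.5]{Ci02} and \cite{Ir13}. By comparison, step (3) is standard once the profile is fixed, although it does require the integrated maximum principle of Lemma \ref{lemma:no_escape_2}, because $H_n$ is only contact — not strictly linear — at infinity in a neighbourhood of $\partial W$.
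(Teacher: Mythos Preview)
Your overall architecture is the same as the paper's: a staircase profile in the two radial coordinates, the action read off as minus the $y$-intercept of the tangent line, and confinement of Floer/continuation strips by Lemma~\ref{lemma:no_escape_2}. Parts (1) and (3) go through as you describe.

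There is, however, a genuine inconsistency in your treatment of (2). You ask the linear piece of slope $a_n$ to have \emph{strictly positive} $y$-intercept $b_n$, so that on the concave ramp just outside $W$ you can conclude $b\ge b_n>0$ and hence negative action. But this contradicts your own convex-ramp computation: on the convex ramp $b'=-\rho f_n''\le 0$, so $b$ \emph{decreases} from its initial value $-\epsilon_n$; the linear piece is reached at the top of that ramp, so its $y$-intercept necessarily satisfies $b_n\le -\epsilon_n<0$. A positive intercept is impossible (and, incidentally, $H_n|_W<0$ \emph{can} be maintained). Hence $b$ enters the concave ramp with a large negative value, and there will be spectral values below $a_n$ attained while $b$ is still negative, producing chords in $\widehat M\setminus W$ with positive action --- exactly the failure you warn against.

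The correct fix is not to move the intercept but to push the concave ramp far out in $r_W$: place it near $r_W=r_n$ with $r_n>(a_n+\epsilon_n+\epsilon_n a_n)/\delta_n$, where $\delta_n=d\bigl(a_n,\mathcal A(\bar\Lambda_0,\bar\Lambda_1;\alpha_W)\bigr)$ is the spectral gap. Any chord on that ramp has $f_n'\le a_n-\delta_n$, and then
\[
\mathcal A_{H_n}(x)<(a_n-\delta_n)(r_n+\epsilon_n)-a_n(r_n-1)+\epsilon_n<0.
\]
The plateau height is then $A_n\approx a_n(r_n-1)$, large because $r_n$ is, and the outer slope at $\partial M$ is taken as a fixed fraction of $a_n$ (in the paper, $a_n/4C$), not an independent parameter, so that the analogous estimate near $r_M=2Cr_n$ also yields negative action. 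Finally, the gap in \cite{Ci02,Ir13} you cite concerns the handle Hamiltonians of Section~\ref{sec:inv_handle}, not the transfer-map construction here; the bookkeeping for this proposition goes back to \cite{Mc08}.
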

\begin{proof}
Denote by $r_{W}$ and $r_{M}$ the radial coordinates on the completions $\widehat{W}$ and $\widehat{M}$. Using the Liouville flow of $X$ on $M$, one can symplectically embed $\widehat{W}$ to $\widehat{M}$. There exists a constant $C$ such that $\{ r_{W} \leq 1\} \subset \{r_{M} \leq C\}$, and consequently $\{r_{W} \leq r\} \subset \{r_{M} \leq Cr\}$. Set $\alpha_{W} = \lambda \vert_{\partial W}$, and assume that all Reeb chord between $\Bar{\Lambda}_0:= \Bar{L}_0 \cap \partial W$ and $\Bar{\Lambda}_1:= \Bar{L}_1 \cap \partial W$ are non-degenerate.

Choose an increasing unbounded sequence $a_n > 4C$ such that $a_n$ is not a period of a Reeb chord for $\alpha_W$ and $(4C)^{-1} a_n $ is not a period of a Reeb chord for $\alpha$. 

The idea is to construct $H_n$ that is radial near $r_{W} = 1$, and linear of slope $a_n$ on the region $r_{W} \leq r_n$, and of slope $(4C)^{-1} a_n$ for $r_{M} \geq 2C r_n$. Also, we want to control the actions of the chords that are created in the regions where the Hamiltonian is radial.

Set $$\delta_n:= d(a_n, \mathcal{A}(\Bar{\Lambda}_0, \Bar{\Lambda}_1 ; \alpha_{W})) = \min_{a \in \mathcal{A}(\Bar{\Lambda}_0, \Bar{\Lambda}_1 ; \alpha_{W})}|a_n - a| > 0,$$
where $\mathcal{A}(\Bar{\Lambda}_0, \Bar{\Lambda}_1 ; \alpha_{W})$ is the spectrum of periods of Reeb chords. Let $\epsilon_n>0$ be a decreasing sequence which converges to $0$, $\epsilon_1 < T_{\min}/(1+ T_{\min})$, where $ T_{\min}:=\min \mathcal{A}(\Bar{\Lambda}_0, \Bar{\Lambda}_1 ; \alpha_{W})$. Finally we choose an increasing sequence $r_n>\max\{2 + 2\epsilon_n/a_n, (a_n + \epsilon_n + \epsilon_n a_n) / \delta_n\}$.

Now, we define our Hamiltonian $H_n$:
\begin{itemize}
	\item on $W \setminus [1-\epsilon_n, 1] \times \partial W$, let $H_n \equiv -\epsilon_n$ be constant,
	\item on $[1-\epsilon_n, r_n] \times \partial W$, let $H_n(x, r_W) = f_n(r_W)$ where $f_n(1-\epsilon_n) = -\epsilon_n$, $0 \leq f_n' \leq a_n $, and $f_n(r_W) = a_n(r_W-1-\epsilon_n)$ for $1\leq r_W \leq r_n - \epsilon_n/a_{n} + \epsilon_n$,
	\item on $[r_n + \epsilon_n, 2r_n - \epsilon_n/C] \times \partial W$ we set $H_n=A_n$ for $A_n \in (a_n(r_n-1) - \epsilon_n, a_n(r_n - 1) )$, 
	\item on $r_M \leq 2C r_n - \epsilon_n$ we keep $H_n = A_n$,
	\item on $[2Cr_n - \epsilon_n, +\infty)$ we set $H_n (x,r_M) = g_n(r_M)$ where $g_n(2C r_n - \epsilon_n) = A_n$, $0 \leq g_n' \leq (4C)^{-1} a_n$, and $g_n(r_M)= (4C)^{-1} a_n(r_M - 2Cr_n) + a_n(r_n-1)$ for $r_M \geq 2Cr_n$.
\end{itemize}
\begin{figure}[h!]
	\centering
	\begin{tikzpicture}
		\draw[->, black!60!white] (-1.7,0.5) -- (6.2, 0.5);
		\draw[->, black!60!white] (-1.5,-0.2) -- (-1.5, 2.2);
		\draw[thick] (-1.5,0) -- (-1, 0) to [out = 0, in = 225] (0.5, 0.5) -- (1, 1) to [out=45, in = 180] (2,1.3) -- (3.5,1.3) to[out = 0, in=-150] (5, 1.5) -- (6,2);
		\draw[dashed, black!60!white] (2,1.3) -- (2,0.5) (5,1.5)--(5,0.5);
		\draw[black!60!white] (-1.6,1.3) -- (-1.4, 1.3) (-1.6, 0) -- (-1.4, 0) (0.5, 0.4) -- (0.5, 0.6) (2, 0.4) -- (2, 0.6) (5, 0.4) -- (5, 0.6);
		\draw node at (0.5,0) {$r_{W} = 1$};
		\draw node at (2,0) {$r_{W}=r_n$};
		\draw node at (5, 0) {$r_{M}= 2C r_n$};
		\draw node at (-1.9, 1.3) {$A_n$};
		\draw node at (-1.9, 0) {$\epsilon_n$};
	\end{tikzpicture}
	\label{figure:hamiltonian}
	\caption{The graph of $H_n$.}
\end{figure}
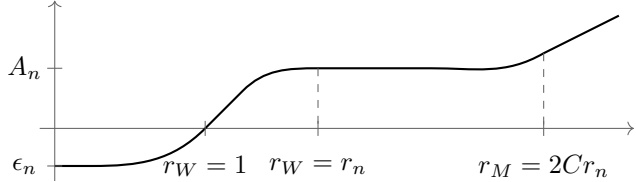
If a Hamiltonian is given by a function of radius $H=h(r)$ in a certain region, since the Hamiltonian vector field $X_H$ is equal to $h'(r) R_{\alpha}$, hence the action of a chord in this region is given by $r h'(r) - f(r)$, which equals to the minus of the $y$-coordinate of the intersection of the tangent line to $h$, and $y$-axis. Using this observation, the actions of the chords of our cofinal sequence $H_n$ are divided into five groups:
\begin{itemize}
\item[(a)] intersection points of $L_0$ and $L_1$ inside $W$ with action $\epsilon_n$,
\item[(b)] Hamiltonian chords that correspond to the non-constant Reeb chords near $r_{W}=1$ of action $r_{W} g'_n(r_{W}) - g_n(r_W)$ by construction this action is positive, and since $\epsilon_n$ converges to $0$, approximately equal to $g'_n(r_{W})$,
\item[(c)] non-constant chords $x$ near $r_{W} = r_n$ of action $r_{W}f'_n(r_W) - f_n{r_W}$, by the choice of $r_n$ and $A_n$ we have that these orbits have negative action:
$$
\begin{aligned}
	\mathcal{A}_{H_n}(x) &<r_n f'_n(r_{W}) - a_n(r_n-1) + \epsilon_n \\
	&< (a_n - \delta_n) (r_n + \epsilon_n) - a_n(r_n-1) + \epsilon_n  \\
	&= -\delta_n r_n +  a_n \epsilon_n - \delta_n \epsilon_n + a_n  +\epsilon_n\\
	&< -\delta_n r_n + a_n + a_n \epsilon_n +\epsilon_n <0.
\end{aligned}
$$
\item[(d)] intersection points of $L_0$ and $L_1$ in $\widehat M \setminus W$ of action $-A_n <0$,
\item[(e)] non-constant chords $x$ of $H_n$ near $r_{M} = 2 C r_n$ of action
$$
\begin{aligned}
	\mathcal{A}_{H_n}(x) &< g_n'(r_{M}) r_{M} - A_n < \frac{a_n}{4C} 2 r_n C- A_n  \\
	&< \frac{1}{2}a_n r_n - a_n(r_n-1) + \epsilon_n < \frac{1}{2}a_n(2 - r_n) + \epsilon_n < 0.
\end{aligned}
$$

This proves claim $(2)$ from the Proposition. For claim $(1)$ we need to carefully choose our functions $f_n$ so that $f_n \leq f_{n+1}$, which is possible since $\epsilon_{n+1} < \epsilon_n$, and $a_n < a_{n+1}$, so linear extensions of $H_n \vert_W$ form a cofinal sequence on $W$. It is not hard to check that $H_n \leq H_{n+1}$ holds globally. One easily checks that $A_{n+1} > A_{n}$. The only potential problem is that $H_n$ hits $H_{n+1}$ near $2r_{n+1} C$. There, we estimate:
$$
\begin{aligned}
	H_n(2r_{n+1}C) &= g_n(2r_{n+1}C)\\ 
	&= \frac{1}{4C} a_n (2r_{n+1} C- 2r_{n}C) + a_n(r_n-1)   \\
	& = \frac{1}{2}a_n r_{n+1} - \frac{1}{2} a_n r_n + a_n(r_n-1)\\
	& = \frac{1}{2} a_n r_{n+1} + a_n \left( \frac{r_n}{2} - 1\right) \\
	& < \frac{1}{2} a_{n+1} r_{n+1} + a_{n+1} \left( \frac{r_{n+1}}{2} - 1\right) \\
	& = a_{n+1}(r_{n+1} - 1) = H_{n+1}(2r_{n+1}C). 
\end{aligned}
$$
\end{itemize}
Since we have that $H_n \leq H_{n+1}$ globally, one can easily arrange monotone homotopies $H_{n, n+1}$. If $J$ is chosen to be admissible near $r_{W}=1$ by applying Lemma \ref{lemma:no_escape_2}, we get that all solutions of Floer's (continuation) equation that are joining chords that are in $W$ stay entirely in $W$.
\end{proof}

The quotient complex $CF_* ^{> 0}(L_0, L_1; M, H_n)$ is generated by the elements of positive action. By our construction, these are the orbits of $H_n \vert_{W}$ that are entirely contained in $W$, whose linear extension we use for the definition of $CF_{*}(\Bar{L}_0, \Bar{L}_1;W, H_n)$. Hence, we have an obvious map that sends generators $x \in CF_{*}(\Bar{L}_0, \Bar{L}_1;W, H_n)$ to its equivalence class $[x] \in CF_* ^{> 0}(L_0, L_1; M, H_n)$. This map is obviously a bijection by our construction. Also, by appealing to Lemma \ref{lemma:no_escape_2} again, we have that this map induces an isomorphism on homology. By passing to the direct limit, and again using the maximum principle for continuation strips, we get that:
$$
 \lim_{\longrightarrow } HF_*^{> 0}(L_0, L_1; M, H_n) \cong \lim_{\longrightarrow } HF_*(\Bar{L}_0, \Bar{L}_1; W, H_n).
$$

\section{Weinstein handles}\label{sec:handle_attach}

In this section, we sketch the construction of the contact surgery from \cite{We91}. For more details, we refer to \cite[\S 6]{Ge08}. 

Topologically, the outcome of contact surgery is the same as the outcome of topological surgery. First, we recall the idea of the topological surgery. Given an embedded sphere $S \cong S^{l}$ in a smooth manifold $Y^m$, together with a trivialization of the normal bundle of $S$, we can form a new manifold by removing $\nu S \cong S^l \times D^{m-l}$, and gluing back in $D^{l+1} \times S^{m-l-1}$ along the common boundary:
$$
Y' = Y \setminus \nu S \cup_{S^l \times S^{m-l-1}} D^{l+1} \times S^{m-l-1}.
$$
Note that it was important to fix a trivialization of the normal bundle, so we can use this map to identify $\partial (\nu S) \cong S^l \times S^{m-l-1}$. Also, the surgery comes with a cobordism $X$ between $Y$ and $Y'$ called the trace of the surgery obtained by gluing $D^{l+1} \times D^{m-l}$ to the trivial cobordism $Y \times [0,1]$:
$$
W = Y \times [0,1] \cup_{S^l \times D^{m-l}} D^{l+1} \times D^{m-l}.
$$

For a contact manifold $(Y, \xi )$, the outcome of the surgery $Y'$ carries a contact structure under some conditions, and the surgery provides a symplectic cobordism from $Y$ to $Y'$. Let $S\cong S^{k-1}$ be an isotropic sphere of a contact manifold. Contact structure $\xi$ carries a canonical conformal symplectic structure given by a positive multiple of $d\alpha$, where $\alpha$ is a $1$-form which determines $\xi$ together with its coorientation. The normal bundle $\nu S \cong T Y / TS$ splits into: $$\nu S \cong TY/ \xi  \oplus T^* S \oplus (TS)^{{d \alpha}} / TS,$$ where $(TS)^{d \alpha}$ denotes the symplectic orthogonal and $CSN(S):= (TS)^{{d \alpha}} / TS$ is the conformal symplectic normal bundle. $TY / \xi$ is naturally trivialized by the Reeb vector field. The stabilization $T S^{k-1}  \oplus \epsilon$ of $TS$ with a trivial line bundle $\epsilon$ carries a natural trivialization via inclusion $S^{k-1} \subset \R^{k}$. Consequently, the trivialization of $\nu S$ is equivalent to the trivialization of $CSN(S)$. 

Now, we will specify a local model, and after (symplectic) identification of a neighborhood of $S\times \{1\} \subset Y \times [0,1]$ with an open set of an isotropic sphere in a local model, we will be able to equip the outcome of the surgery with a contact structure. Since the local model is a subset of $\R^{2n}$ which has both positive and negative boundaries transverse to some Liouville vector field, the outcome of the surgery carries a contact structure as well.

To describe this more precisely, consider $\R^{2n}$ with the standard symplectic structure $\omega_{st} = \sum dx_i \wedge d y_i$ together with the following data:

\begin{equation*}
\begin{aligned}
		\lambda &= \frac{1}{2}\sum_{i=1}^k (3 x_j d y_j + y_j dx_j)  + \frac{1}{2} \sum_{i= k+1}^{n} ( x_j dy_j - y_j dx_j ),\\
		X &=  \frac{1}{2}\sum_{i=1}^k (3x_j \partial_{x_j} - y_j \partial_{y_j} ) + \frac{1}{2}\sum_{i=k+1}^n (x_j \partial_{x_j} + y_j \partial_{y_j}),\\
		\phi &=  \frac{1}{4}\sum_{i=1}^k \left(3x_i^2 - y_i^2\right) +\frac{1}{4} \sum_{i=k+1}^{n} \left(x_i^2 + y_i^2 \right).
\end{aligned}
\end{equation*}
Since $i_X \omega_{st} = \lambda$, and $ d \lambda = \omega_{st}$, $X$ is a Liouville vector field. We also have that $X = \nabla \phi$, so $X$ is transverse to $\Sigma_-:=\phi^{-1}(-1)$ because $\phi(0) = 0$ which implies further that $\Sigma_-$ is a contact hypersurface. It will be convenient to introduce the following notation:
$$
x= \frac{3}{4}\sum_{i=1}^k x_i^2, \hspace{5mm} y = \frac{1}{4} \sum_{i=1}^k  y_i^2, \hspace{5mm} z = \frac{1}{4} \sum_{i=k+1}^{n} \left(x_i^2 + y_i^2 \right).
$$
Let $S = \{ x = z = 0, y = 1\} \subset \Sigma_-$, since $\lambda \vert_{TS} =0$ it is an isotropic sphere in $\Sigma_-$. It follows from \cite[Proposition 4.2]{We91} that we can identify an open neighborhood of $U$ of $S \times \{1\} \subset Y \times (0,1] $  with an open neighborhood $U_-$ of $S \subset \{ \phi \leq -1\}$ which is matching the symplectic form $d (r \alpha)$ and the Liouville vector field $\partial_r$ on $U$ with $\omega_{st}$ and $X$ on $U_-$.

We want to find a contact hypersurface $\Sigma_+ \subset \R^{2n}$, which coincides with $\Sigma_-$ outside of $U_-$, and such that $\Sigma_-$ and $\Sigma_+$ bound a set diffeomorphic to $D^k \times D^{2n-k}$. This will be achieved by setting an appropriate function with level set $\Sigma_+$. Fix $\epsilon, \delta>0$ and pick a smooth function $g:\R \to \R$ with $0 \leq g' \leq (1+ 2\epsilon)^{-1}$ such that:
$$
g(t)=\begin{cases}
\frac{1}{1+2\epsilon} t,  &t\leq 1,\\
1, 	&t \geq 1+ 3\epsilon,
\end{cases}
$$
and set 
$$
\psi_{\delta}(x,y,z) = x - y + z - (1 + \epsilon) + (1+\epsilon) g (y + (x+z)/\delta).
$$
For $\delta$ small enough, this coincides with $\phi (x,y,z)= x-y+z$ outside of $U_-$, we also need to check that $\Sigma_{\delta} = \{ \psi_{\delta} = -1\}$ is transverse to $X$, so $\Sigma_{+}:=\Sigma_{\delta}$.

\begin{figure}[h!]
\centering
\begin{tikzpicture}
\draw node at (-4,3) [above] {$\Sigma_-$} ;
\draw[thick](-4,1) to [out=20, in=180] (-3,1.2) to  [out=0, in=160] (-2,1); 
\draw[thick](-4,3) to [out=-20, in=180] (-3,2.8) to [out=0, in=200] (-2,3);
\draw[->, black!50!white] (-3,1) -- (-3,3) ;
\draw[->, black!50!white] (-4,2) -- (-2,2);
\draw[black!80!white] node at (-3,3) [above right]{$\R^k$};
\draw[black!80!white] node at (-2,2) [above right] {$\R^{2n-k}$};
 \draw[red] (-1,1) to [out=20, in=270] (-0.5, 2) to [out=90, in=-20] (-1,3);
\draw[red] (1,1) to [out=160, in=270] (0.5, 2) to [out=90, in= 200]  (1,3); 

\draw[red] node at (1,2) {$\Sigma_+$};

\draw node at (3,2) {$H^{2n}_{k}$};
\draw[thick] (2,1) to [out=20, in=180] (3,1.2) to  [out=0, in=160] (4,1); 
\draw[thick](2,3) to [out=-20, in=180] (3,2.8) to [out=0, in=200] (4,3);
  \draw[red] (2,1) to [out=20, in=270] (2.5, 2) to [out=90, in=-20] (2,3);
 \draw[red] (4,1) to [out=160, in=270] (3.5, 2) to [out=90, in= 200]  (4,3); 
\end{tikzpicture}
\caption{A schematic picture of $\Sigma_-$, $\Sigma_+$ and $H_k^{2n},$ where $\R^{2n-k} = \{y=0\}$, and $\R^k = \{x=z=0\}$.}
\label{figure:handle}
\end{figure}
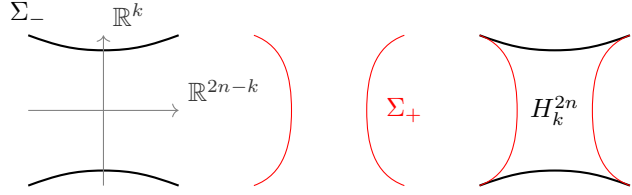

Denote by $X_x $, $X_y$, and $X_z$ Hamiltonian vector fields of functions $x, y$ and $z$. Since 
$$
d \psi_{\delta} = (1 + g'(1+\epsilon)/\delta ) dx + (-1 + g'(1+\epsilon) )dy + (1 + g'(1+\epsilon)/\delta ) dz,
$$
we get 
$$
X_{\psi_{\delta}} = (1 + (1+\epsilon)g'/\delta ) X_x +(-1 + (1+\epsilon) g')X_y + (1 + (1+\epsilon)g'/\delta) X_z.
$$
Using that
$$
X_x= \frac{3}{2}\sum_{i=1}^k x_i \partial_{y_i}, \hspace{5mm} X_y= -\frac{1}{2}\sum_{i=1}^k y_i \partial_{x_i}, \hspace{5mm} X_z= \frac{1}{2}\sum_{i=k+1}^n(x_i \partial_{y_i} -y_i \partial_{x_i}),
$$
leads to
$$
\begin{aligned}
d \psi_{\delta} (X) &= \omega_{st}(X, X_{\psi_{\delta}}) = \lambda(X_{\psi_{\delta}}) \\
&= (1 + (1+\epsilon)g'/\delta ) 3 x - (-1 + (1+\epsilon)g' ) y + (1 + (1+\epsilon)g'/\delta ) z.
\end{aligned}
$$
This is strictly positive, unless $x=y=z=0$, but $\psi_{\delta} (0,0,0) = -1 - \epsilon \neq -1$.

Define the handle by  $H_k^{2n} := \{\phi \geq -1\} \cap \{\psi_{\delta} \leq -1\}$ (see Figure \ref{figure:handle}). The outcome of the surgery is obtained by removing $U_- \cap Y \times \{1\}$ and gluing back in $\Sigma_{\delta} \setminus \Sigma_-$, or equivalently, the surgery is the right boundary of 
$$
Y\times [0,1] \cup_{S\times \{1\}} H_k^{2n}.
$$

We summarize this into the following
\begin{thm}
	Let $(Y^{2n-1}, \xi)$ be a contact manifold with a co-oriented contact structure $\xi$. Let $S \subset Y$ be an isotropic sphere with trivialized conformal symplectic normal bundle $CSN(S)$. There exist a contact structure on the surgery $$Y'=Y \setminus \nu S \cup_{S^{k-1} \times S^{2n-k-2}} D^k \times S^{2n-k-2},$$
	and a Liouville cobordism $(W, \lambda, X)$ such that $\partial W = Y \sqcup Y'$, and the Liouville vector field $X$ points inwards to $Y$ and outwards to $Y'$.
\end{thm}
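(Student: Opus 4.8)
The plan is to produce $W$ by attaching the local model $H_k^{2n}$, together with the Liouville data $(\lambda,X)$ constructed above, to the trivial Liouville cobordism over $Y$ along a Weinstein neighbourhood of the isotropic sphere $S$, and then to read off the boundary and its contact structure directly from the explicit model.

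First I would fix a contact form $\alpha$ with $\ker\alpha=\xi$ inducing the given coorientation, and equip $Y\times[0,1]$ with the Liouville structure whose Liouville vector field is $\partial_r$ — transverse to every slice $Y\times\{r\}$, pointing inward along $Y\times\{0\}$ and outward along $Y\times\{1\}$. The hypotheses provide exactly the framing needed to place $S$ into the model: in the splitting $\nu S\cong TY/\xi\oplus T^*S\oplus CSN(S)$ recalled above, the first summand is trivialised by the Reeb field, the middle summand carries the canonical trivialisation coming from $S^{k-1}\subset\R^k$ after the standard stabilisation, and $CSN(S)$ is trivialised by hypothesis; together these identify $\nu S$ with the normal data of the model isotropic sphere $S=\{x=z=0,\ y=1\}\subset\Sigma_-$. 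Invoking \cite[Proposition~4.2]{We91}, one then obtains a symplectomorphism $\Psi$ from a neighbourhood $U$ of $S\times\{1\}$ in $Y\times(0,1]$ onto a neighbourhood $U_-$ of the model sphere inside $\{\phi\leq-1\}$, carrying the symplectic form and Liouville vector field of $Y\times[0,1]$ to $\omega_{st}$ and $X$ (compare Figure~\ref{figure:handle}); since a Liouville form is determined by the symplectic form and its dual vector field, $\Psi$ also intertwines the Liouville forms.

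Next I would set
$$
W \;:=\; \big(Y\times[0,1]\big)\ \cup_{\Psi}\ H_k^{2n},
$$
gluing $H_k^{2n}=\{\phi\geq-1\}\cap\{\psi_\delta\leq-1\}$ to $Y\times[0,1]$ along the part $H_k^{2n}\cap\Sigma_-$ of its boundary, which $\Psi$ carries onto a neighbourhood of $S$ in $Y\times\{1\}$. Because $\Psi$ matches the Liouville forms and fields, $d(r\alpha)$ and $\lambda$ patch to a global Liouville form $\lambda_W$ on $W$, whose Liouville vector field $X_W$ restricts to $\partial_r$ on $Y\times[0,1]$ and to $X$ on $H_k^{2n}$. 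For $\delta$ small the hypersurface $\Sigma_\delta$ agrees with $\Sigma_-$ outside $U_-$, so the two pieces fit together smoothly and $\partial W=\big(Y\times\{0\}\big)\sqcup Y'$ with $Y'=\big(Y\times\{1\}\setminus U_-\big)\cup\big(\Sigma_\delta\setminus\Sigma_-\big)$; since $\Sigma_-$ and $\Sigma_\delta$ cobound a region diffeomorphic to $D^k\times D^{2n-k}$ in the model, tracing the identification $\Psi$ through shows that $Y'$ is the surgered manifold in the statement. Transversality of $X_W$ is inherited from the pieces: $\partial_r$ points inward along $Y\times\{0\}$ and outward along the $Y\times\{1\}$-portion of $Y'$, and along the $\Sigma_\delta$-portion we have already computed $d\psi_\delta(X)>0$, so $X$ exits $\{\psi_\delta\leq-1\}\supset H_k^{2n}$ and hence points outward. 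Finally $\lambda_W|_{Y'}$ is a contact form, being the restriction of a Liouville form to a hypersurface transverse to its Liouville vector field; it equals $\alpha$ away from the handle, and its kernel is the asserted contact structure on $Y'$.

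The main obstacle is the construction of $\Psi$: one must check that the data in the hypotheses — the trivialisation of $CSN(S)$ together with the canonical trivialisations of $TY/\xi$ and $T^*S$ — is precisely what Weinstein's isotropic neighbourhood theorem consumes, and that the resulting identification can be arranged to match not merely the contact structures on neighbourhoods in $Y$ but the full symplectic-and-Liouville data on neighbourhoods in the symplectization, as in \cite[Proposition~4.2]{We91}. The remainder — smoothness of the glued manifold, the patching of $\lambda_W$ and $X_W$, transversality along $\partial W$, and the topological identification of $Y'$ — is bookkeeping resting on the explicit computations with $\phi$, $\psi_\delta$ and $X$ already carried out. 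Since the theorem only claims existence, independence of $(W,\lambda,X)$ and of the contact structure on $Y'$ from the choices of $\alpha$, $\epsilon$, $\delta$ and $\Psi$ need not be addressed.
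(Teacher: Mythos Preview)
Your proposal is correct and follows essentially the same approach as the paper: the paper presents the entire construction (the local model $(\lambda,X,\phi)$, the identification of neighbourhoods via \cite[Proposition~4.2]{We91}, the function $\psi_\delta$ and the transversality computation $d\psi_\delta(X)>0$, the definition of $H_k^{2n}$) in the paragraphs \emph{preceding} the theorem, and then states the theorem as a summary with no further proof. Your write-up recapitulates exactly this construction, with somewhat more care paid to the gluing of the Liouville data and the identification of $\partial W$.
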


In our situation, we also want to keep track of the Lagrangian $L_1 \subset M$, and how the surgery along $S\subset \Lambda_1 = \partial L_1$ affects it. Denote by:
$$H_k^n:= \{(x_1,...,x_n, y_1,...,y_n)\in H_k^{2n} \mid \forall i \hspace{2mm} x_i=0 \},$$
after a careful identification, we can assume that $\Lambda_1 \cap U_{-} \subset H_k^n \cap \Sigma_-$. Since $H_k^n$ is invariant under the Liouville flow and $\lambda\vert_{H_k^n} = 0$ we get
\begin{lemma}
	The Lagrangian $L_1 \cup_S H_k^n \subset (M \cup_{S} H_k^{2n}, \lambda, X)$ obtained by attaching $H_k^n$ to $L_1$ along $S$ satisfies $\lambda \vert_{L_1 \cup_S H_k^n} = 0$.
\end{lemma}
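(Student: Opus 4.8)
The plan is to verify $\lambda\vert_{L_1\cup_S H_k^n}=0$ piecewise, on $L_1$ and on the handle piece $H_k^n$ separately, and then to check that the two descriptions are consistent on the overlap region where the gluing takes place, so that they assemble to a smooth embedded Lagrangian on which $\lambda$ vanishes.

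First I would record the two building blocks. For $L_1$ itself nothing is to be done: we normalized $\lambda\vert_{L_1}=0$ for all Lagrangians in Section~\ref{sec:geometry_background}, and cylindricity means that near $\partial M$ one has $L_1\cap(\partial M\times[1,\infty))=\Lambda_1\times[1,\infty)$, invariant under the Liouville flow. For the handle piece I would compute $\lambda\vert_{H_k^n}$ directly from the model form
$$
\lambda=\frac12\sum_{j=1}^{k}\bigl(3x_j\,dy_j+y_j\,dx_j\bigr)+\frac12\sum_{j=k+1}^{n}\bigl(x_j\,dy_j-y_j\,dx_j\bigr).
$$
Since $H_k^n=\{x_1=\cdots=x_n=0\}\cap H_k^{2n}$, both $x_j$ and the restriction of $dx_j$ to $TH_k^n$ vanish identically there, so every summand of $\lambda$ restricts to $0$ on $H_k^n$. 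In particular $\omega_{st}\vert_{TH_k^n}=d\bigl(\lambda\vert_{TH_k^n}\bigr)=0$, and since $\dim_{\R}H_k^n=n$, $H_k^n$ is a Lagrangian piece; it is moreover invariant under $X$, as already observed.

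Next I would justify that the union is a genuine smoothly embedded Lagrangian rather than merely a topological gluing. Here I invoke the Weinstein normal form \cite[Proposition 4.2]{We91} used to build the handle attachment: it identifies a neighborhood $U$ of $S\times\{1\}$ in the symplectization $(\partial M\times(0,1],\,d(r\alpha),\,\partial_r)$ with a neighborhood $U_-$ of $S$ in $(\{\phi\le-1\},\,\omega_{st},\,X)$, matching the Liouville data. As remarked just before the statement, this identification can be arranged so that $\Lambda_1\cap U_-\subset H_k^n\cap\Sigma_-$; since $L_1$ is cylindrical (invariant under the radial/Liouville flow near $\partial M$) and $H_k^n$ is invariant under $X$, flowing this matching out by the Liouville flow identifies $L_1\cap U$ with an open subset of $H_k^n$. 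Therefore $L_1\cup_S H_k^n$, obtained by excising a neighborhood of $S$ in the cylindrical end of $L_1$ and gluing in $H_k^n$, is a smooth $n$-dimensional submanifold of $M\cup_S H_k^{2n}$. Combining the two vanishing statements: $\lambda$ vanishes on the part coming from $L_1$, it vanishes on $H_k^n$, and on the overlap $U$ the two descriptions agree under the above identification, with no primitive to reconcile because $\lambda\vert_{L_1}=0$ exactly (not merely exact). Hence $\lambda\vert_{L_1\cup_S H_k^n}=0$, which in particular reproves it is Lagrangian.

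The step I expect to be the main obstacle is the compatibility built into the middle paragraph: choosing the Weinstein chart so that it simultaneously straightens $\Lambda_1$ into $H_k^n\cap\Sigma_-$ \emph{and} respects the cylindrical product structure of $L_1$, so that the excise-and-glue operation produces an honest smooth Lagrangian. Once this identification is in place, the vanishing of $\lambda$ on each piece is the elementary coordinate computation above.
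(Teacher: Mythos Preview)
Your proof is correct and follows essentially the same approach as the paper: the paper simply records that $H_k^n$ is Liouville-invariant with $\lambda\vert_{H_k^n}=0$, notes the careful identification $\Lambda_1\cap U_-\subset H_k^n\cap\Sigma_-$, and refers to \cite[Lemma 2.6]{Ci02} for details. You have filled in precisely those details---the coordinate computation on $H_k^n$ and the smoothness of the glued Lagrangian via the Weinstein chart---so your argument is a fleshed-out version of the paper's sketch rather than a different route.
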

For more details see \cite[Lemma 2.6]{Ci02}. Here we have abused the notation for a Liouville form $\lambda$ and Liouville vector field $X$ using the identification of a neighborhood of $S$ in $\partial M$ and $S=\{x=z=0, y=1\} \subset \Sigma_-$.

\subsection{Construction of the cofinal sequence for the handle attachment}

In this section, we define a cofinal sequence of Hamiltonians on the surgered manifold $M \cup H_k^{2n}$, by extending a given cofinal sequence on $M$ that is linear near $\partial M$, to the completion of $M \cup H_k^{2n}$. The original idea in symplectic homology from \cite{Ci02} was to introduce only one constant periodic orbit $x(t) = 0 \in H_k^{2n}$ which has index tending to infinity. In \cite[Discussion 87]{Fa16-phd} and \cite[\S 3.3]{Fa20}, it was remarked that the original construction contains a small gap. Hamiltonians from \cite{Ci02} with listed properties can not be continuous after linear extension. Similar gap appears in \cite[p. 394]{Ir13}. 

Here, we follow the strategy of \cite{Fa20}, where it was allowed to create more periodic orbits in the region $\{x=y=0\}$. In our situation, this will create Hamiltonian $1$-chords with endpoints on $H_k^n$ which correspond to the Reeb chords on $\{x=y=0\} \cap \Sigma_{+}$ with endpoints on $H_k^n \cap \Sigma_{+}$. These chords are degenerate; however, their Robin-Salamon index goes to infinity. Hence, after a non-degenerate perturbation of our Hamiltonian near these chords, we will get clusters of Hamiltonian chords with Maslov index going to infinity, since the difference in index between a degenerate chord and its non-degenerate perturbation is uniformly bounded by $n/2$.

Also, in the process, we would like to make the handle thinner, since we would like to have that all the chords of $H_i$ are either entirely contained in $M \setminus U^i_-$, or in the handle $H_k^{2n}$. Here $U^i_-$ is a nested sequence of neighbourhoods of $S$ whose intersection is equal to $S$. This is possible since $S$ is isotropic of dimension $k < n$, i.e., sub-critical, hence, generically it avoids Reeb chords. As an outcome one can find a cofinal sequence $H_i$ admissible for defining $HW_*(L_0, L_1; M)$, and a sequence of handles (by shrinking $\delta$) such that if a chord of $H_i$ starts on $L_0$, then it needs larger period then $1$ to enter the region where $\partial M$ is identified with the handle $H_k^{2n}$ that is thin enough. This is avoided by the generic perturbation result, see \cite[Lemma 4.7, Lemma 5.4]{Ir13} or \cite[Appendix B]{Fa20}. See also \cite[Appendix B.1]{BCS24} for a similar transversality argument in the case of Riemannian metrics. 

The idea is to show that for the map $F_{\alpha}:M \times \R_+ \to M \times M$ given by $(x,t) \mapsto (x, \varphi_{\alpha}^t(x))$ can be made transverse to $\Lambda_0 \times S$. Since $\Lambda_0$ is Legendrian, and $S$ is isotropic, one can achieve transversality within the space of contact forms, and by the dimension argument we see that the image of $F_{\alpha}$ avoids $\Lambda_0 \times S$. Indeed the dimension of the domain is $2n$, the codimension of $\Lambda_0 \times S$ is $4n-2 - (n-1) - (k-1) = 3n - k > 2n$. As a consequence, we have the following

\begin{lemma}\label{lemma:generic_form}
	There exists a contact form $\alpha'$ on $\partial M$ such that for every $a>0$ there is $\delta > 0$ small enough so that every Reeb chord from $\Lambda_0$ to $\Lambda_1$ entering $H_\delta:= \{\phi \geq -1\} \cap \{ \psi_{\delta} \leq -1\}$ has period $>a$.
\end{lemma}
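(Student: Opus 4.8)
\section*{Proof proposal for Lemma \ref{lemma:generic_form}}

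The plan is to choose $\alpha'$ generically among contact forms defining the fixed contact structure $\xi=\ker\alpha$, so that the Reeb flow of $\alpha'$ never carries $\Lambda_0$ onto $S$ (and, when $L_0=L_1$, never carries $S$ onto $\Lambda_1$), and then to close the argument by a soft compactness estimate: short Reeb chords stay a definite distance away from $S$, while the handle region shrinks into $S$ as $\delta\to 0$.

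First I would set up the parametric problem. Write $\alpha'=e^{g}\alpha$ with $g\in C^{\infty}(\partial M)$; since $\ker\alpha'=\ker\alpha=\xi$, every such $\alpha'$ keeps $\Lambda_0,\Lambda_1$ Legendrian and $S$ isotropic. Let $\mathcal{G}$ be a Banach manifold of such functions $g$ (a $C^{m}$-completion, or the usual $\mathcal{C}^{\varepsilon}$-space so as to remain within $C^{\infty}$), and consider the evaluation map
$$
F:\ \partial M\times\mathbb{R}_{>0}\times\mathcal{G}\ \longrightarrow\ \partial M\times\partial M,\qquad (x,\tau,g)\ \longmapsto\ \bigl(x,\varphi^{\tau}_{e^{g}\alpha}(x)\bigr).
$$
The key claim is that $F$ is transverse to the submanifolds $\Lambda_0\times S$ and $S\times\Lambda_1$ of $\partial M\times\partial M$: the $\partial_{\tau}$-derivative of $F$ produces the Reeb direction $R_{\alpha'}$, which is transverse to $\xi\supseteq TS,\,T\Lambda_i$, and the remaining directions are supplied by varying $g$, using the standard fact that for $\tau>0$ the endpoint $\varphi^{\tau}_{e^{g}\alpha}(x)$ can be moved in an arbitrary direction of $\xi$ by a suitable variation of $g$ supported along the trajectory (this is exactly the mechanism of \cite[Lemma 4.7, Lemma 5.4]{Ir13}, \cite[Appendix B]{Fa20}, \cite[Appendix B.1]{BCS24}). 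I expect verifying this universal submersion property to be the main technical obstacle; everything else is formal. Granting it, the Sard--Smale theorem yields a residual, hence dense, set of $g$ for which $F_{\alpha'}:=F(\cdot,\cdot,e^{g}\alpha)$ is transverse to both $\Lambda_0\times S$ and $S\times\Lambda_1$; fix such a $g$ (which may be taken $C^{\infty}$-small) and set $\alpha':=e^{g}\alpha$.

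Now the dimension count does the work: $\dim(\partial M\times\mathbb{R}_{>0})=2n$, while $\operatorname{codim}_{\partial M\times\partial M}(\Lambda_0\times S)=\operatorname{codim}(S\times\Lambda_1)=(4n-2)-(n-1)-(k-1)=3n-k>2n$ because $S$ is subcritical, $k<n$. Hence $F_{\alpha'}^{-1}(\Lambda_0\times S)=F_{\alpha'}^{-1}(S\times\Lambda_1)=\emptyset$: no Reeb trajectory of $\alpha'$ issuing from $\Lambda_0$ meets $S$ at positive time, and (this is needed only when $L_0=L_1$, since otherwise $\Lambda_0\cap S\subseteq\Lambda_0\cap\Lambda_1=\emptyset$) no trajectory issuing from $S$ ever reaches $\Lambda_1$. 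Together these say that no Reeb chord of $\alpha'$ from $\Lambda_0$ to $\Lambda_1$ has a point — initial, interior, or terminal — lying on $S$. Moreover, since $\Lambda_0$ is Legendrian, $R_{\alpha'}$ is transverse to it, and $\Lambda_0$ (likewise $\Lambda_1$) is compact, so there is $\epsilon_0>0$ such that every Reeb chord from $\Lambda_0$ to $\Lambda_1$ has period $>\epsilon_0$.

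To finish, fix $a>0$ and consider
$$
\mathcal{T}_a=\bigl\{(x_0,T,\tau)\in\Lambda_0\times[\epsilon_0,a]\times[0,a]\ :\ \varphi^{T}_{\alpha'}(x_0)\in\Lambda_1,\ \tau\le T\bigr\},
$$
which is a closed subset of a compact set, hence compact. Its image $K_a=\{\varphi^{\tau}_{\alpha'}(x_0):(x_0,T,\tau)\in\mathcal{T}_a\}\subset\partial M$ — the union of all Reeb chords of $\alpha'$ from $\Lambda_0$ to $\Lambda_1$ of period $\le a$ — is therefore compact, and by the previous paragraph $K_a\cap S=\emptyset$, so $d_0:=\operatorname{dist}(K_a,S)>0$ for any fixed metric on $\partial M$. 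On the other hand, from the explicit form of $\psi_{\delta}$ one checks, as in \S\ref{sec:handle_attach} (where $\psi_{\delta}\to\phi$ off $\{x=z=0\}$ as $\delta\to0$, so that the traces $U^{i}_{-}$ of $H_{\delta_i}$ form a nested family of neighbourhoods of $S$ with intersection $S$), that the handle region shrinks into $S$: for every neighbourhood $V$ of $S$ in $\partial M$ there is $\delta_0>0$ with $H_{\delta}\cap\partial M\subset V$ for $0<\delta<\delta_0$. Applying this with $V=\{p:\operatorname{dist}(p,S)<d_0\}$ gives $H_{\delta}\cap K_a=\emptyset$ for $\delta$ small, i.e.\ no Reeb chord of period $\le a$ enters $H_{\delta}$. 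Contrapositively, every Reeb chord from $\Lambda_0$ to $\Lambda_1$ entering $H_{\delta}$ has period $>a$, which is the assertion of the lemma.
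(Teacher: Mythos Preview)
Your proposal is correct and follows the same strategy the paper sketches just before the lemma: perturb the contact form so that the evaluation map $F_{\alpha'}(x,\tau)=(x,\varphi^{\tau}_{\alpha'}(x))$ is transverse to $\Lambda_0\times S$, then use the subcriticality dimension count $3n-k>2n$ to conclude the preimage is empty. The paper gives no further proof beyond this sketch and the reference to \cite[Lemma 5.4]{Ir13}.

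Your write-up is more complete than the paper's in two respects. First, you supply the compactness closure (the set $K_a$ of points on chords of period $\le a$ is compact and disjoint from $S$, while the attaching locus of $H_\delta$ on $\partial M$ shrinks to $S$), which is the step actually converting ``no chord meets $S$'' into the statement about $H_\delta$; the paper leaves this implicit. Second, you also treat $S\times\Lambda_1$ to rule out chords whose \emph{initial} point lies on $S$ (relevant when $L_0=L_1$ since $S\subset\Lambda_1$); the paper's sketch only mentions $\Lambda_0\times S$, so your version is in fact slightly more careful on this point.
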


The proof is similar to \cite[Lemma 5.4]{Ir13}. We can also choose a contact form $\alpha'$ such that $Y' = \{(x, 1/h(x)) \mid x \in \partial M\} \subset \partial M \times (0, \infty)$ with the form $\lambda \vert_{Y'}$ is strictly contactomorphic to $(\partial M, \alpha')$, where the contact Hamiltonian $h: \partial M \to \R_+$ is bigger or equal then $1$. Additionally, we require that all the Reeb chords from $\Lambda_0$ to $\Lambda_1$ are non-degenerate. 

Without loss of generality, we will assume that our initial contact form $\alpha$ is as in Lemma \ref{lemma:generic_form}. Now, for fixed $\epsilon$, and fixed $\delta_0$, set $H_k^{2n} := \mathcal{H}_{\delta_0}$. Since $g$ is a monotone function, we have that $\psi_{\delta'} < \psi_{\delta}$ for $\delta < \delta'$ which further implies $\mathcal{H}_{\delta} \subset \mathcal{H}_{\delta'}$. Hence we can choose a nested family of handles $\mathcal{H}_{\delta_i}$, all of them contained in the initial handle $H_k^{2n}$, so that for each $a_i$ there is no Reeb chords from $\Lambda_0$ to $\Lambda_1$ of period $<a_i$ entering $\mathcal{H}_{\delta_i}$. All these handles give rise to the same completed Liouville manifold $\widehat{M \cup H_k^{2n}}$.

Note that the subspace $\{x=y=0\}$ is a Liouville subspace of the handle. The Liouville flow of $X$ is given by:
\begin{equation}\label{eq:liouville_flow_handle}
	\Phi_X^t( ... ,x_k,y_k, x_{k+1}, y_{k+1}, ...) = (..., e^{3t/2}x_k, e^{-t/2}y_k, e^{t/2}x_{k+1}, e^{t/2}y_{k+1}, ...).
\end{equation}
Hence $z$ is $1$-homogeneous with respect to $\Phi_X^{\log t}$ on $\{x=y=0\}$. For $z \leq \delta$ we have: 
$$
\psi_{\delta}= \left(1 + \frac{1+\epsilon}{\delta(1+2\epsilon)}\right)z - (1+\epsilon),
$$
let $r$ be a radial coordinate corresponding to $\Sigma_{+}= \{\psi_{\delta} = -1 \}$, i.e. $\Sigma_{+} = \{r=1\}$. In these coordinates, we have: 
$$
\psi_{\delta}(x,r) = a_{\delta} r - (1+ \epsilon).
$$
For $r=1$ we have $\psi_{\delta}=-1$, so we get $a_{\delta} - 1 - \epsilon = -1$, i.e., the slope $a_{\delta}$ is equal to $\epsilon$. 

The general idea is to use $\psi_{\delta}$ to extend a Hamiltonian on $M$ that is linear near $\partial M$. Before we continue, we will briefly explain the gap in \cite{Ir13}, which also appears in \cite{Ci02}. First of all, note that using the Lyapunov function $\mathcal{L}=\sum_{i=1}^k x_i y_i$ we know that there is no Hamiltonian chords of $\psi_{\delta}$ with endpoints on $H_k^n$ away from the critical set $Crit \mathcal{L} = \{x=y=0\}$ of $L$. Hence, all the potential chords are inside the Liouville subspace $\{x=y=0\}$. The gap that appears in \cite{Ci02} and \cite{Ir13} is that the Hamiltonians that satisfy all the requirements can not be continuous. In the notation of \cite{Ir13}, $H_i$ are Hamiltonians on $M$, and $G_i$ are the extensions to the handle. The requirements on $G_i$ on the handle are:
\begin{itemize}
	\item[1)]  $G_i = g_i(4z)$ for $g_i$ with $g_i' \notin (\pi/2)\Z$ on $\{x=y=0\}$,
	\item[2)] $\partial_{y_i} G_i <0$,
	\item[3)] there exists $A_i> m \pi/2$, $B_i>0$, $C_i<0$ such that
	$$
	G_i = G_i(0) + A_i 4z + B_i x + C_i y,
	$$
	\item[4)] $H_i = \alpha_i(r - \nu)$ for near $\partial M$, $G_i = \alpha_i(r - \nu)$ near $\partial (M \cup H_k^{2n})$. 
\end{itemize}
From $1)$ we get that $G_i(0) < \alpha_i(1 - \nu)$, and from $2)$ we get $G_i(0) > \alpha_i(1 - \nu)$ which is a contradiction. See \cite[Discussion 87]{Fa16-phd} for more details, where the gap was discovered.

The purpose of all these conditions is to rule out non-constant chords on the handle. The solution proposed in \cite{Fa16-phd, Fa20} is to allow non-constant periodic orbits, but to control their Conley-Zehnder index. We follow the same strategy in the setting of Lagrangians. We will first describe the procedure for the Hamiltonian that satisfies $H' = r-2$ near $\partial M$, and for a more general slope, we will take $aH' +2a + b$ that satisfies $ar+b$ near $\partial M$. This is possible since the handle remains unchanged if we replace $\phi$, and $\psi$ with $\phi' = a\phi+b$, and $\psi' =a \psi +b$, and $H_k^{2n} = \{ \phi' \geq -a + b \} \cap \{\psi' \leq -a +b\}$.

\subsubsection{Extension of $H'$ from $M$ to $M \cup H_k^{2n}$}
Let $\Sigma \subset \R^{2n}$ be a hypersurface transverse to $X$ with Reeb vector field of the form:
$$
R_{\alpha} = c_x X_x - c_y X_y + c_z X_z, 
$$
for smooth positive functions $c_x, c_y, c_z$, where $\alpha = \lambda \vert_{\Sigma}$. Consider a Hamiltonian $\tilde{h}_{\Sigma}(x,r) = a r + b$, and let $h_{\Sigma} =\tilde{h}_{\sigma} \circ \Phi^{-1} $ be its push-forward on $\Phi(\Sigma \times \R_+)$ by the Liouville flow $\Phi: \Sigma \times \R_+ \to \R^{2n}$, $(x,r) \mapsto \varphi_X^{\log r}(x)$. Then we have the following

\begin{lemma}\label{lemma:reeb}
	The Hamiltonian vector field of $X_{h_{\Sigma}}$ is given by
	$$
	X_{h_{\Sigma}} = C_x X_x - C_y X_y + C_z X_z,
	$$
	for smooth positive functions $C_x, C_y, C_z$.
\end{lemma}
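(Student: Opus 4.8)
The plan is to reduce the statement to two computations: the form of the Hamiltonian vector field of $h_\Sigma$ on a single level set of the radial coordinate attached to $\Sigma$, and the rescaling behaviour of the three model fields $X_x$, $X_y$, $X_z$ under the Liouville flow of the handle.

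First I would pass to the coordinates $(p,r)\in\Sigma\times\R_+$ given by $\Phi$. Exactly as in Lemma~\ref{lemma:collar} (using $(\varphi_X^t)^*\lambda=e^t\lambda$ and $\lambda(X)=0$) one gets $\Phi^*\lambda=r\alpha$, hence $\Phi^*\omega_{st}=dr\wedge\alpha+r\,d\alpha$. Since $\tilde h_\Sigma=ar+b$ in these coordinates, solving $i_X\omega=-a\,dr$ shows that the Hamiltonian vector field is $aR_\alpha$, where $R_\alpha$ is the Reeb field of $(\Sigma,\alpha)$ regarded as an $r$-independent field with no $\partial_r$-component; this is just \eqref{eq:contact_at_inf} for the constant contact Hamiltonian $a$. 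Pushing forward by $\Phi$, this says $X_{h_\Sigma}=a\,(\varphi_X^{\log r})_*R_\alpha$ on the level set $\varphi_X^{\log r}(\Sigma)$, so with the hypothesis $R_\alpha=c_xX_x-c_yX_y+c_zX_z$ on $\Sigma$ it remains only to understand $(\varphi_X^t)_*X_x$, $(\varphi_X^t)_*X_y$, $(\varphi_X^t)_*X_z$.

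Next I would plug the explicit formulas for $X_x,X_y,X_z$ into the flow \eqref{eq:liouville_flow_handle}. A short computation of $d\varphi_X^t$ applied to these fields should give
$$
(\varphi_X^t)_*X_x=e^{-2t}X_x,\qquad (\varphi_X^t)_*X_y=e^{2t}X_y,\qquad (\varphi_X^t)_*X_z=X_z,
$$
because for $X_x$ the $e^{-3t/2}$ rescaling of the coefficient $x_i$ (read off at $\varphi_X^{-t}$) combines with the $e^{-t/2}$ rescaling of $\partial_{y_i}$ under $d\varphi_X^t$, and similarly for $X_y$ (here $e^{t/2}$ times $e^{3t/2}$), while the two effects cancel for $X_z$. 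Setting $t=\log r$ and substituting back, I get, for $q=\Phi(p,r)$,
$$
X_{h_\Sigma}(q)=\tfrac{a\,c_x(p)}{r^2}\,X_x(q)-a\,r^2 c_y(p)\,X_y(q)+a\,c_z(p)\,X_z(q),
$$
so the lemma holds with $C_x=(a/r^2)(c_x\circ\Phi^{-1})$, $C_y=ar^2(c_y\circ\Phi^{-1})$, $C_z=a(c_z\circ\Phi^{-1})$ expressed in the coordinates $(p,r)$ of $q$; these are smooth since $\Phi$ is a diffeomorphism onto its image and $r>0$, and positive since $a>0$ and $c_x,c_y,c_z>0$.

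The one step that needs care is the bookkeeping in the middle computation: one must keep the rescaling of the pointwise coefficients (governed by $\varphi_X^{-t}$) separate from the rescaling of the coordinate fields $\partial_{x_i},\partial_{y_i}$ under $d\varphi_X^t$, and check that the two combine into the clean exponents $-2$, $+2$, $0$. Everything else is formal, given Lemma~\ref{lemma:collar} and \eqref{eq:contact_at_inf}.
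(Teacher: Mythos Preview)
Your argument is correct and follows essentially the same route as the paper: identify $X_{h_\Sigma}$ on each Liouville level as $a$ times the pushforward of $R_\alpha$ by $\varphi_X^{\log r}$, then compute how $X_x$, $X_y$, $X_z$ rescale under this pushforward using the explicit flow \eqref{eq:liouville_flow_handle}. Your exponents $(r^{-2},\,r^{2},\,1)$ differ from those written in the paper $(t^{3/2},\,t^{-1/2},\,t^{1/2})$, but your bookkeeping of the two separate rescalings is the careful one, and in any case only positivity of $C_x,C_y,C_z$ is needed for the lemma.
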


\begin{proof}
	The Hamiltonian vector field $X_{\tilde{h}_{\Sigma}}$ of $\tilde{h}_{\Sigma}$ is equal to $a R_{\alpha}$, hence we have that the Hamiltonian vector field of $h_{\Sigma}$ restricted on $\varphi^{\log t}_X(\Sigma)$ is equal to $at R_t$ where $R_t$ is the Reeb vector field of $\varphi^{\log t}_X(\Sigma)$ with respect to the contact form $\lambda \vert_{\varphi^{\log t}_X(\Sigma)}$.
	
	The Reeb vector field $R_t$ satisfies $R_t = 1/t D \varphi^{\log t}_X R_{\alpha}$. This follows easily from ${\varphi^t_{X}}^* \lambda = e^t \lambda$  (for more details see \cite[Lemma 3.1]{Fa20}). From $R_{\alpha} = c_x X_x - c_y X_y + c_z X_z$, and from Equation (\ref{eq:liouville_flow_handle}) we get:
	$$
	X_{h_{\Sigma}} \vert_{\varphi^{\log t}_X(\Sigma)} = a D \varphi^{\log t}_X R_{\alpha} = t^{3/2}ac_x X_x - t^{-1/2}ac_y  X_y + t^{1/2}ac_z  X_z.
	$$
\end{proof}

Let $H'$ be an admissible Hamiltonian on $M$, which is of the form $r-2$ near $\partial M$. Since we have identified a neighborhood of $S \subset M$ with a neighborhood of $S$ in $\{ \phi \leq -1\}$, we will just consider the part $\partial M$ identified with a part of $\Sigma_- \subset \R^{2n}$. Consider a Hamiltonian $\tilde{h}_{\Sigma_-}: \Sigma_- \times \R_+$, $\tilde{h}_{\Sigma_-}(x,r) = r-2$ and   its push-forward  $h_{\Sigma_-}$ to $\R^{2n}$ by the Liouville flow. Under the identification of neighbourhood $U \subset \partial M \times (0,1]$ of $S$ with $U_- \subset \{ \phi \leq -1\}$ we have that $H'$ coincides with $h_{\Sigma_-}$. Since $\Sigma_- = \{\phi = -1\}$, we know that the Reeb vector field $R_{\lambda}$ is proportional to $X_{\phi} = X_x - X_y + X_z$. Along $S$ we have:
$$
\lambda(X_{\phi}) =  \frac{1}{2}\sum_{i=1}^k y_i dx_i (-X_y) = \frac{1}{4} \sum_{i=1}^k y_i^2 =y= 1.
$$
Equality $\lambda(X_{\phi})=1$ along $S$ implies $X_{h_{\Sigma_-}} = X_{\phi}$ along $S$, since $R_{\alpha} = X_{h_{\Sigma_-}}$ along $\Sigma_-$. We also have that $h_{\Sigma_-} \vert_S = \phi \vert_S = -1$. Since the Hamiltonian vector fields coincide, we can conclude that $dh_{\Sigma_-} = d\phi$ along $S$. As a consequence, given any neighborhood $U_{-}$ of $S$ there exists a function $\widehat{\phi}$ and a neighborhood $V_{-} \subset U_{-}$ such that $\widehat{\phi} = h_{\Sigma_-}$ on $\R^{2n} \setminus U_{-}$\footnote{The function $h_{\Sigma_-}$ is defined only on the image of $\Sigma_- \times \R_+$ by the Liouville flow of the, for simplicity we write $\R^{2n}$.}, $\widehat{\phi} = \phi$ on $V_{-}$ and $\widehat{\phi}$ is arbitrarily $C^1$-close to $h_{\Sigma_-}$. Since $X_{h_{\Sigma_-}} = C^{-}_x X_x - C^-_y X_y + C^-_z X_z$ by Lemma \ref{lemma:reeb}, we can achieve the same form for the Hamiltonian vector field of $\widehat{\phi}$. Additionally, since $X_x = X_z = 0$ along $\{x=z=0\}$, we can make $V_{-} \subset U_{-}$ arbitrarily thin in the $x$ and $z$ directions, and keeping the $y$ size fixed. Because of this, we can choose the same $\epsilon$ for all handles in the definition of $\psi_{\delta}$. Now, fix $\epsilon$ sufficiently small, and choose $\delta$ (depending on $U_-$) small enough so that $\mathcal{H}_{\delta} \cap \Sigma_- \subset V_{-}$. Define $\widehat{H}:M \cup \mathcal{H}_{\delta} \to \R$ by:

$$
\widehat{H} (p) = \begin{cases}
	\psi_{\delta} (p) & p \in (V_{-} \cap \{\phi \leq -1\}) \cup \mathcal{H}_{\delta},\\
	\widehat{\phi}(p) & p \in (U_{-} \cap \{\phi \leq -1\}) \setminus V_{-},\\
	H'(p) &p \in M \setminus U_-.
\end{cases}
$$
By construction, this extension is smooth. On $\mathcal{H}_{\delta} \cup U_{-}$ we have:
$$
X_{\widehat{H}} = \widehat{C}_x X_x - \widehat{C}_y X_y + \widehat{C}_z X_z.
$$
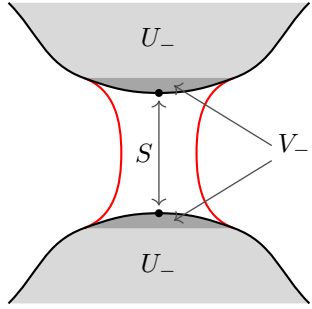
\begin{figure}[h!]
	\centering
	\begin{tikzpicture}
		\fill[black!15](-2,6)  to [out=45, in=200] (-1,7) to [out=20, in=180] (0,7.2) to  [out=0, in=160] (1,7) to [out=-20, in=135] (2,6); 
		\fill[black!15](-2,10)  to [out=-45, in=160] (-1,9) to [out=-20, in=180] (0,8.8) to [out=0, in=200] (1,9)to [out=20, in=225] (2,10);
		\fill[black!35] (-1,7) to [out=20, in=180] (0,7.2) to  [out=0, in=160] (1,7); 
		\fill[black!35](-1,9) to [out=-20, in=180] (0,8.8) to [out=0, in=200] (1,9);
		\draw[red, thick](-1,7) to [out=20, in=270] (-0.5,8) to [out=90, in=-20] (-1,9);
		\draw[red, thick](1,7) to [out=160, in=270] (0.5,8) to [out=90, in= 200]  (1,9);
		\draw[black, thick](-2,6)  to [out=45, in=200] (-1,7) to [out=20, in=180] (0,7.2) to  [out=0, in=160] (1,7) to [out=-20, in=135] (2,6); 
		\draw[black, thick](-2,10)  to [out=-45, in=160] (-1,9) to [out=-20, in=180] (0,8.8) to [out=0, in=200] (1,9)to [out=20, in=225] (2,10); 
		\node at (-0.2,8){$S$};
		\fill[black] (0,8.8) circle(1.5pt) (0,7.2) circle (1.5pt);
		\draw[<->, line width = 0.5pt, black!70] (0,7.3) -- (0,8.7);
		\node at (0,6.5) {$U_-$};
		\node at (0,9.5) {$U_-$};
		\draw[<-, black!70, line width = 0.5pt] (0.2,7.1) -- (1.5,7.9);
		\draw[<-, black!70, line width = 0.5pt] (0.2,8.9) -- (1.5,8.1);
		\node at (1.8,8.1) {$V_-$};
	\end{tikzpicture}
	\label{figure:neighborhoods_handle}
	\caption{Neighbourhoods $V_- \subset U_-$ of $S \subset M$}
\end{figure}

\subsubsection{Extension of $\widehat{H}$ to the symplectization}
The following interpolation Lemma is important for extending the Hamiltonian in the symplectization. As we have seen, $\psi_{\delta}$ is linear of slope $\epsilon$ on $\{x=y=0\}$, we would like to interpolate between $\psi_{\delta}$ and $r-2$ in the region $(\Sigma_+ \setminus \Sigma_- )\times \R_+$.  The following Lemma will give us control over the dynamics of the Hamiltonian of the interpolation, so that we can calculate the Maslov indices of the created chords on the handle $H_{k}^{2n}$.

Let $(\Sigma \times \R_+, \omega = d(r \alpha))$ be a symplectization of the closed contact manifold $(\Sigma, \alpha)$. For a $\omega$-compatible almost complex structure $J$ denote by $\| \cdot \|$ associated norm. 
\begin{lemma}\label{lemma:interpolation}
	Given $\epsilon, \delta, \rho >0$, there exists a smooth monotone increasing function $\beta:\R \to [0,1]$ such that $\beta(r) = 0$ for $r \leq 1-\epsilon$, $\beta(r)=1$ for $r\geq 1$, and for all smooth functions $\phi, \psi$ on $\Sigma \times \R_+$, with $\phi\vert_{\Sigma \times \{1\}} = \psi \vert_{\Sigma \times \{1\}}$ and $|\partial_r \phi - \partial_r \psi| < \rho$ for all $(x,r) \in \Sigma \times [1-\epsilon, 1]$ we have that Hamiltonian vector fields $X_{\phi}$, $X_{\psi}$ satisfy
	$$
	\sup \|X_{\phi + (\psi - \phi)\beta} -(X_{\phi} +(X_{\psi} - X_{\phi})\beta)\| \leq \delta.
	$$
\end{lemma}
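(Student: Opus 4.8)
The plan is to reduce everything to a local estimate near the fixed compact region $\Sigma \times [1-\epsilon, 1]$ and to exploit the linearity of the map $\phi \mapsto X_\phi$. First I would write $G := \psi - \phi$, so that the quantity to be controlled is
$$
X_{\phi + G\beta} - X_\phi - (X_\psi - X_\phi)\beta = X_{\phi + G\beta} - X_\phi - X_G \beta.
$$
Because the assignment $H \mapsto X_H$ is $C^\infty(\Sigma\times\R_+)$-\emph{linear in} $dH$ but not $C^\infty$-linear in $H$ (one has $X_{fH} = f X_H + H X_f$ only up to the derivative of $f$), the correct identity is $X_{\phi + G\beta} = X_\phi + X_{G\beta}$ and $X_{G\beta} = \beta X_G + X_{\beta} \cdot G$ is \emph{not} quite right either; rather $i_{X_{G\beta}}\omega = -d(G\beta) = -\beta\, dG - G\, d\beta$, so $X_{G\beta} = \beta X_G + G X_\beta$, where $X_\beta$ is the Hamiltonian vector field of the function $\beta = \beta(r)$. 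Hence the error term is exactly $G X_\beta = G \beta'(r) X_r$, where $X_r$ is the Hamiltonian vector field of the radial coordinate. The norm to estimate is therefore
$$
\sup \|G(x,r)\,\beta'(r)\,X_r(x,r)\|.
$$

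Now the key observations: $\beta'$ is supported in $[1-\epsilon,1]$, $G$ vanishes on $\Sigma\times\{1\}$ and has $|\partial_r G| < \rho$ on $\Sigma\times[1-\epsilon,1]$, hence by the fundamental theorem of calculus $|G(x,r)| \le \rho\,(1-r) \le \rho\epsilon$ on that slab. On the compact set $\Sigma \times [1-\epsilon, 1]$ the vector field $X_r$ is bounded in $J$-norm by some constant $K = K(\epsilon, J)$ depending only on $\epsilon$, the contact form, and $J$ (not on $\phi$ or $\psi$). So the error is bounded by $\rho\,\epsilon\,K\,\sup_{[1-\epsilon,1]}|\beta'|$. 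The remaining freedom is the choice of $\beta$: I would fix \emph{any} smooth monotone $\beta_0$ with $\beta_0 = 0$ for $r \le 1-\epsilon$, $\beta_0 = 1$ for $r \ge 1$, let $C_0 = \sup|\beta_0'|$, and note that every such profile has $\sup|\beta'| \ge 1/\epsilon$, so we cannot shrink $\sup|\beta'|$ below $1/\epsilon$; instead we absorb the remaining slack into $\rho$. That is, the error is $\le \rho\,\epsilon\,K\,C_0$, and since $\epsilon, K, C_0$ are already fixed once $\epsilon$ and $J$ are chosen, this is $\le \delta$ provided $\rho \le \delta/(\epsilon K C_0)$ — but the statement gives us $\rho$ as an independent parameter, so instead we choose $\beta$ adapted to $\rho$: rescale so that $\beta' $ is as flat as possible, i.e. take $\beta(r) = \beta_0((r-1+\epsilon)/\epsilon)$ with $\sup|\beta'| = C_0/\epsilon$, giving error $\le \rho C_0 K$; if this exceeds $\delta$ we have a genuine constraint, so the honest statement requires choosing $\beta$ after seeing $\rho, \delta$. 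The clean route: \emph{given} $\epsilon, \delta, \rho$, pick $\beta$ monotone from $0$ to $1$ on $[1-\epsilon,1]$ with the additional property that $\sup_{[1-\epsilon,1]}|\beta'| \cdot \rho \cdot \sup_{\Sigma\times[1-\epsilon,1]}\|X_r\| \le \delta$ — which is possible because... actually it is \emph{not} always possible to make $\sup|\beta'|$ small; so the real content must be that the estimate is needed only for $G$ small, and the right reading is that $\beta$ is chosen first depending on $\epsilon$ only, and then the conclusion holds for $\rho$ small enough depending on $\delta$.

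The main obstacle, then, is bookkeeping the order of quantifiers correctly: the cleanest correct statement is that one fixes a standard monotone profile $\beta$ depending only on $\epsilon$, sets $K := \sup_{\Sigma\times[1-\epsilon,1]}\|\beta'(r) X_r\| < \infty$ (finite by compactness of $\Sigma$ and smoothness), and then for any $\phi,\psi$ with $\phi = \psi$ on $\Sigma\times\{1\}$ and $|\partial_r\phi - \partial_r\psi| < \rho$ on the slab one gets, via the identity $X_{\phi+(\psi-\phi)\beta} - (X_\phi + (X_\psi - X_\phi)\beta) = (\psi-\phi)\, X_\beta$ and the estimate $|\psi - \phi| \le \rho\epsilon$ on the slab, the bound $\sup\|\cdots\| \le \rho\epsilon\,K' \le \delta$ once $\rho$ is small, where $K'$ rescales $K$. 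So I would (i) derive the Hamiltonian-vector-field identity from $i_{X_H}\omega = -dH$ and linearity in $dH$; (ii) observe the error is supported on the compact slab $\Sigma\times[1-\epsilon,1]$; (iii) bound $|\psi - \phi| \le \rho\epsilon$ there by integrating $\partial_r(\psi-\phi)$ from $r=1$; (iv) bound $\|X_\beta\| = \|\beta'(r) X_r\|$ by a compactness constant depending only on $\epsilon$ and $J$; (v) conclude and choose the final constants. Step (iv) together with getting the quantifier structure right is where care is needed; everything else is a one-line computation.
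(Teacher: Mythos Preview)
You correctly identify the error term: with $G = \psi - \phi$ one has
\[
X_{\phi + G\beta} - \bigl(X_\phi + \beta X_G\bigr) = G\,\beta'(r)\,R,
\]
where $R = X_r$ is the Reeb vector field, and you also correctly record the pointwise bound $|G(x,r)| \le \rho(1-r)$ coming from $G|_{r=1} = 0$ and $|\partial_r G| < \rho$. The gap is that you immediately weaken this to $|G| \le \rho\epsilon$, and from that point on you are stuck: the crude bound gives an error $\le \rho\epsilon\,\|R\|_\infty\,\sup|\beta'|$, and since any admissible profile must have $\sup|\beta'| \ge 1/\epsilon$ you cannot make this small for arbitrary $\rho$. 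This leads you to doubt the quantifier structure of the statement, but the statement is correct as written.

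The missing idea is to keep the factor $(1-r)$ and let it do the work. The error at $(x,r)$ is bounded by $\rho\,\|R\|_\infty\,(1-r)\,\beta'(r)$, so it suffices to construct a smooth monotone $\beta$ with $\beta=0$ on $(-\infty,1-\epsilon]$, $\beta=1$ on $[1,\infty)$, and
\[
(1-r)\,\beta'(r) \le \frac{\delta}{\rho\,\|R\|_\infty} \quad \text{for all } r \in [1-\epsilon,1].
\]
This is possible precisely because $\int_{1-\epsilon}^{1}\frac{dr}{1-r} = +\infty$: one can choose a nonnegative smooth bump $\tilde\beta$ supported in $(1-\epsilon,1)$ with $\tilde\beta(r) \le \dfrac{\delta}{\rho\,\|R\|_\infty(1-r)}$ and $\int_{1-\epsilon}^{1}\tilde\beta = 1$, and then set $\beta(r)=\int_{1-\epsilon}^{r}\tilde\beta$. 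The point is that $\beta'$ may be large, but only where $(1-r)$ is correspondingly small. This is exactly the construction in the paper's proof, and it produces $\beta$ depending on all three of $\epsilon,\delta,\rho$ as the lemma asserts.
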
	

Put differently, for a careful choice of the interpolation function $\beta$, the Hamiltonian vector field of the interpolated function $\phi + (\psi - \phi)\beta$ does not differ by too much from the interpolation of Hamiltonian vector fields.

\begin{proof}
	The Hamiltonian vector field for $\phi + (\psi - \phi)\beta$ satisfies:
	$$
	X_{\phi + (\psi - \phi)\beta}= X_{\phi} + (X_{\psi} - X_{\phi}) \beta + (\psi-\phi) \beta' R, 
	$$
	so the difference $X_{\phi + (\psi - \phi)\beta} -(X_{\phi} +(X_{\psi} - X_{\phi})\beta)$ is equal to
	$(\psi-\phi) \beta' R$. Consequently we need to show that $\| (\psi-\phi) \beta' R\| \leq \delta$ for all $(x,r) \in \Sigma \times [1-\epsilon, 1]$. Using that $\phi$ and $\psi$ coincide on $\Sigma \times \{1\}$ we get
	$$
	\begin{aligned}
		\|(\psi - \phi)(x,r) \beta'(r) R(x)\| &= \| - \int_{r}^1 \partial_s (\psi - \phi) ds \beta' R(x) \|\\
		& \leq (1-r) \rho \beta'(r) \|R\|_{\infty}.
	\end{aligned}
	$$
	So, we will finish the proof if we can find $\beta$ so that $0\leq \beta'(r) \leq \delta (\rho \|R\| (r-1))^{-1}$ for $r \in [1-\epsilon, 1]$. Since $\int_{1- \epsilon}^1 (1-s)^{-1}ds = +\infty$ we can choose a smooth function $\tilde{\beta}$ such that $0 \leq \tilde{\beta} \leq \delta (\rho \|R\| (r-1))^{-1}$, $\tilde{\beta}$ vanishes for $s \notin (1-\epsilon, 1)$, and $\int_{1- \epsilon}^1 \tilde{\beta}(s) ds = 1$. Setting $\beta(r) = \int_{1-\epsilon}^r \tilde{\beta}(s)ds$ finishes the proof.
\end{proof}

We need to interpolate between $\psi_{\delta}$ and the push-forward $h_{\Sigma_{\delta}}$ of $\widehat{h}_{\Sigma_+}(x,r) = r-2$. For the index calculation, it will be important that the resulting Hamiltonian $H$ satisfies 

\begin{equation}\label{eq:form_of_ham}
	X_{H} = C_x X_x - C_y X_y + C_z X_z,
\end{equation}
for positive functions $C_x, C_y, C_z$. Let $\beta$ be a function from Lemma \ref{lemma:interpolation}, and set
$$
H:= \widehat{H} + (h_{\Sigma_{\delta}} - \widehat{H}) \beta(h_{\Sigma_{\delta}} +2).
$$
The region where the interpolation is taking effect is $(\Sigma_{\delta} \setminus U_-) \times \R_+$. By Lemma \ref{lemma:interpolation} we have that the Hamiltonian vector field $X_H$ is close to the interpolation $X_{\widehat{H}} + (X_{h_{\Sigma_{\delta}}} - X_{\widehat{H}}) \beta$. But since both Hamiltonian vector fields of $\widehat{H}$ and $h_{\Sigma_{\delta}}$ have desire form, we have that $X_H$ satisfies (\ref{eq:form_of_ham}).

Now, using the Lyapunov function $\mathcal{L} = \sum_{i=1}^k x_i y_i$ we see that all chords of $X_{H}$ with endpoints on $\widehat{L_1 \cup H_k^n}$ in $ \widehat{M \cup \mathcal{H}_{\delta}} \setminus \widehat{M \setminus V_-}$ must lie on $\{x=y=0\}$. Indeed, we have that $\mathcal{L}$ vanishes on $\widehat{L_1 \cup H_k^n}$, hence any chord $x$ of $X_H$ satisfies 
$$
\begin{aligned}
	\frac{d}{dt} \mathcal{L}(x(t)) &= d \mathcal{L} (X_H) \\
	&=  \sum_{i=1}^{k} y_i dx_i (- C_y X_y) + x_i dy_i (C_x X_x) \\
	&= \frac{1}{2}C_y y + \frac{1}{2}C_x  x.
\end{aligned}
$$
If $x$ has endpoints on $\widehat{L_1 \cup H_k^n}$, and $C_x, C_y >0$ we have that $x=y=0$, hence all the chords are contained in the Liouville subspace $\{x=y=0\}$, where we interpolate between two functions that only depend on the radial coordinate. 
\begin{rem}
	Note that if $L_0$ is not the same Lagrangian as $L_1$, the only $1$-chords of $H$ are the chords in $M$. This means that we can already establish the Invariance theorem in the case that $L_0$ is not affected by the surgery. We proceed with the case $L = L_0 = L_1$.
\end{rem}

\section{Invariance of wrapped Floer homology under subcritical handle attachment}\label{sec:inv_handle}

\subsection{Index calculation}

The goal of this section is to calculate the Robin-Salamon index of newly created orbits of $H$ in the region of the handle. On $\{x=y=0\}$ the Hamiltonian $H$ is an interpolation between $\psi_{\delta}$ and $h_{\Sigma_{+}}$ which in radial coordinates has the form $r-2$. Recall $\psi_{\delta}$ that has slope $\epsilon$ on $\{x=y=0\}$, hence in $C_z$ is a function depending only on $z$ that interpolates between $(1+(1+\epsilon) \delta^{-1} (1+2\epsilon)^{-1})$ and $\epsilon^{-1} (1+(1+\epsilon) \delta^{-1} (1+2\epsilon)^{-1}) $. Indeed, recall that: $$\psi_{\delta} = (1+(1+\epsilon) \delta^{-1}(1+2\epsilon)^{-1}) z - (1+\epsilon) = \epsilon r - (1+ \epsilon).$$
This holds for $z \leq \delta$. For $\epsilon$ small enough, independent of $\delta$, we get that this holds for $\{r \leq 1\} \subset \{ z \leq \delta\}$ on $\{x=y=0\}$. Since the elements of the cofinal family will be of the form $ aH + 2a +b$, we need to calculate the indices for scaled Hamiltonians. The Hamiltonian vector field of $ aH + 2a +b$ is given by $ aX_{H}$, and denote by $\phi_{aH}^t$ its flow. Since $X_x = X_y = 0$ along $\{x=y=0\}$, there we have $aX_{H} = aC_z X_z$, hence the $z$ coordinate is fixed along the flow of $\phi_{a H}^t$ and consequently $C_z$ is constant along the flow. Because of this, we can explicitly calculate the flow:
$$
\phi_{a H}^t (0,...,0, z_{k+1}, ..., z_{n}) = (0,...,0, e^{i a C_z t/2}z_{k+1}, ..., e^{i a C_z t/2}z_{n}),
$$
since $X_z = (0,...,0, i z_{k+1} /2, ..., i z_{n}/2)$. As a consequence we get that the $1$-chords on $\{ (iy_1, ..., i y_n) \mid y_i \in \R\}$ are the constant orbit at $0$ and $S^{n-k-1}$ family of $1$-chords $\gamma$ starting and ending on $\{ (0,..., 0, i y_{k+1}, ..., i y_{n}) \mid \sum y_i ^2 = c >0\}$ that appears on the level $z$ for which $a C_z /2 \in \pi \Z$. Since we will use the splitting axiom, we recall the definition of the Robbin-Salamon index in $\R^2$ from \cite{RS93}. For a Lagrangian path $\Lambda(t) = e^{i a(t)} V$ with respect to the fixed vertical Lagrangian $V = i \R$:
$$
\mu_{RS} (\Lambda(t), V) = \frac{1}{2} \dim ( e^{i a(0)} V \cap V) +  ie^{i a(t)} \vert_{(0,1)} \cap V +  \frac{1}{2} \dim ( e^{i a(1)} V \cap V),
$$ 
where for a path $\alpha:(0,1) \to \R^2$, $\alpha \cap V$ is the signed intersection number. Our path of Lagrangians will be determined by the push-forward of $i \R^n$ by $ D\phi_{aH}^t$. Let $x$ be a $1$-chord of $X_{aH}$ with endpoints on $i \R^n$. We need to solve the ODE given by $\frac{d}{dt}D \phi_{aH}^t (x(0)) = aD  X_{H} (\phi_{aH}^t (x(0))$. Since $X_{aH} = aC_x X_x - a C_y X_y + a C_z X_z$, and $X_x= X_y = 0$ along our curves, derivatives of $C_x$ and $C_y$ do not contribute. Hence, in $(x_i, y_i)$ plane for $i \leq k$ we only have contributions of $C_xD X_x - C_y D X_y$. In the plane $(x_i, y_i)$ we have $X_x(x_j,y_j) = 3i x/2 $ and $X_y (x_j, y_j) = -y_j/2$. Hence $$C_xD X_x - C_y D X_y = \frac{1}{2}\sum_{i=1}^{k}(C_y \partial_{x_i} dy_i + 3 C_x  \partial_{y_i} dx_i),$$ or, equivalently, the $i$-th diagonal $2\times 2$-component $\Phi_i$ of $D \phi_{aH}^t (x(0))$ satisfies:
$$
\frac{d}{dt} \Phi_i(t) = a\begin{bmatrix}
	0 & C_y/2\\
	3C_x/2 & 0
\end{bmatrix} \Phi_i(t).
$$
On the other hand $D C_z$ has parts involving $dx_i, dy_i$ for $i \leq k$, but $X_z$ does not depend on $x_i, y_i$ for $i \leq k$, also, along $\{x=y=0\}$ function $C_z$ depends only on $z$ hence this part of $DC_z$ is of the form  $C'(z)dz$, but $C_z$ is constant along the flow, so $DC_z$ does not contribute. The component of $X_z$ in $(x_i, y_i)$-plane for $i \geq k+1$ satisfies $D X_z =i/2 \cdot \mathrm{Id}$, so for $i \geq k+1$ we get
$$
\frac{d}{dt} \Phi_i(t) = a i C_z /2 \Phi_i(t).
$$
Since $\Phi(t) = D \phi_{aH}^t (x(0)) = \mathrm{diag}(\Phi_1(t),...,\Phi_n(t))$, we can calculate the Robin-Salamon indices using the splitting axiom:
$$
\mu_{RS}(x) = \sum_{i=1}^{n} \mu_{RS}(\Phi_i(t) V, V).
$$ 
For $i  \leq k$, we have that:
$$
\begin{bmatrix}
	0 & C_y/2\\
	3C_x/2 & 0
\end{bmatrix}  =  \begin{bmatrix}
0 & -1\\
1 & 0
\end{bmatrix}  \cdot \begin{bmatrix}
3C_x /2 & 0\\
0 & -C_y/2
\end{bmatrix} ,
$$
since the symmetric matrix has signature $0$, we have that $\mu_{RS} (\Phi_i(t) V, V) = 1/2$ for all $i \leq k$. To see this differently, let's look at the path $$\Phi_i(t) \begin{bmatrix}
	0\\
	1
\end{bmatrix} = \begin{bmatrix}
\Phi^{12}_i(t)\\
\Phi^{22}_i(t)
\end{bmatrix}.$$ This path satisfies the system of ODE:
$$
\begin{aligned}
	{\Phi^{12}_i}'(t) &= a(t) \Phi^{22}_i(t)\\
	{\Phi^{22}_i}'(t) &= b(t) \Phi^{12}_i(t),
\end{aligned}$$ with initial the conditions $\Phi^{12}_i(0) = 0$, and $\Phi^{22}_i(0)=1$. Since $a,b >0$ we can easily see that $\Phi^{22}(t)>1$ for $t > 0$, hence the intersection number between the path $(\Phi^{12}_i(t), \Phi^{22}_i(t))$ and $V$ on $(0,1)$ is $0$, hence $\mu_{RS} (\Phi_i(t) V, V) = 1/2$. 

Now, for $i \geq k+1$ we have that $\Phi_i(t) = e^{i a C_z t/2}$, and $a C_z \in 2 \pi \Z$. Since this is a loop of Lagrangians, we have $\dim (\Phi(1) V \cap V) = 1$, and the Robin-Salamon index satisfies
$$
\mu_{RS}(\Phi_i(t) V, V) = \frac{1}{2} + \frac{aC_z}{2\pi} - 1 + \frac{1}{2}.
$$
In total, we get
$$
\mu_{RS}(x) = \frac{n}{2} + (n-k) \left(\frac{aC_z}{2\pi} - \frac{1}{2}\right).
$$

Now, after resolving degeneracy, this Morse-Bott family will split into two clusters of non-degenerate orbits whose Maslov indices satisfy $$\begin{aligned}
	\mu(x_-) &\in \left(\frac{(n-k) a C_z}{2 \pi} -n + \frac{k}{2},  \frac{(n-k) a C_z}{2 \pi} +\frac{k}{2}\right)\\ 
	\mu(x_+) &\in \left(\frac{(n-k) a C_z}{2 \pi} - \frac{k}{2} -1,  \frac{(n-k) a C_z}{2 \pi} +n -\frac{k}{2}-1\right).
\end{aligned}$$
Here, we were using the relationship $\mu(x) = \mu_{RS}(x) - n/2$ together with the following two facts: given an isolated degenerate orbit $x$, after a perturbation created non-degenerate orbits $x'$ satisfy $$\mu_{RS}(x') \in \left(\mu_{RS}(x) - \frac{n}{2}, \mu_{RS} + \frac{n}{2}\right).$$
On the other hand, if a degenerate orbits form a manifold, then we will create clusters of non-degenerate orbits corresponding to each critical point of a Morse function on a given manifold, and the index will be shifted by the Morse index of a critical point. For more details, see \cite[Proposition 2.2]{CFHW96}. Since $C_z$ is bounded from below, we see that the Maslov indices of created orbits go to infinity as $a \to +\infty$.
\subsection{The proof of the invariance}

For the invariance theorem, we could appeal to Viterbo's restriction morphism; however, we would need to be careful about the actions of the newly created orbits. We do not have on the nose that $HW_*^{\geq 0}(L_0, L_1 \cup H_k^n; \widehat{M \cup H_k^{2n}}) \cong HW_*(L_0, L_1; M)$ by the action reasons. We could play the index argument, but the same index argument would show that $HW_*^{\geq 0}(L_0, L_1 \cup H_k^n; \widehat{M \cup H_k^{2n}}) \cong HW_*(L_0, L_1 \cup H_k^n; \widehat{M \cup H_k^{2n}})$. Hence, we will directly show:
$$
HW_*(L_0, L_1; M) \cong HW_*(L_0, L_1 \cup H_k^n; \widehat{M \cup H_k^{2n}}).
$$ 

Choose a cofinal family $H_i = a_i H_{\delta_i} +2a_i +b_i$\footnote{note that the definition of the initial Hamiltonian $H$ of that satisfies $r-2$ in the symplectization depends on $\delta$.} where $\delta_i$ is a decreasing sequence such that for each $a_i$ all Reeb chords on $\partial M$ with periods smaller than $a_i$ do not enter the handle $\mathcal{H}_{\delta_i}$. Since for $\delta_{i+1} \leq \delta_i$ we have that $\Sigma_{\delta_{i+1}} \subset \mathcal{H}_{\delta}$, i.e. the radial coordinate $r_{\delta_{i}}$ corresponding to $\Sigma_{\delta_{i}}$ satisfies $$r_{\delta_{i}} \vert_{\Sigma_{\delta_{i+1}}} \leq 1.$$ Consequently we have $a_{i} r_{\delta_{i}} \leq a_{i+1} r_{\delta_{i}} \leq a_{i+1} r_{\delta_{i+1}}$. 

We have that linear extensions of $H_i \vert_{M \setminus (1-\delta_{i},1] \times \partial M}$ calculate the wrapped Floer homology $HW_*(L_0,L_1;M)$. On the other hand, the same cofinal family calculates $HW_*(L_0, L_1 \cup H_k^n; \widehat{M \cup H_k^{2n}}).$

Fix a degree $*=k$. There exists $c(k) \in \Z$ such that for $i \geq c(k)$ we have that the chain complexes $CF_k(L_0, L_1, H_i \vert M ; M)$ and $CF_k(L_0,  L_1 \cup H_k^n, H_i; \widehat{M \cup H_k^{2n}})$ have the same generators. Choose a sequence of almost complex structures $J_i$ that is of contact type near $\partial M \times \{1-\delta_i\}$, and of contact type near $\partial (M\cup H_k^ {2n})$. By Lemma \ref{lemma:no_escape_2}, we have that all solutions of Floer's equation and the continuation equation that connect orbits that are in $M$, stay entirely in $M$. For $i \geq c(k)$ we have the following commutative diagram:
 \begin{equation*}
 	\centering
 	\begin{tikzcd}
 		HF_k(L_0, L_1, H_i \vert M) \arrow[r, "\Psi_i"]\arrow[d, "\Phi_{H_{i,i+1}\vert_M}"] &  HF_k(L_0,L_1 \cup H_k^n, H_i)\arrow[d, "\Phi_{H_{i,i+1}}"] \\
 		HF_k(L_0, L_1, H_{i+1} \vert_M) \arrow[r, "\Psi_{i+1}"] & HW_k(L_0, L_1\cup H_k^n, H_i),
 	\end{tikzcd}
 \end{equation*}
 where $\Psi_i$ is the isomorphism which acts as identity on generators, $\Phi_{H_{i,i+1}\vert_M}$ and $\Phi_{H_{i,i+1}}$ are continuation maps. Passing to the direct limit, we finish the proof.

{\footnotesize
	\bibliography{citations}
	\bibliographystyle{alpha}
}
\Address
\end{document}